\title{Permutation-based presentations for Brin's higher-dimensional
  Thompson groups~$nV$}
\author{Martyn Quick \\[7pt]
  Mathematical Institute, University of St Andrews,\\
  North Haugh, Fife, KY16 9SS, United Kingdom\\[3pt]
  \texttt{mq3@st-andrews.ac.uk}}
\newtheorem{thm}{Theorem}[section]
\newtheorem{lemma}[thm]{Lemma}
\newtheorem{prop}[thm]{Proposition}
\newtheorem{cor}[thm]{Corollary}
\newcommand{\AND}{\qquad\text{and}\qquad}
\newcommand{\Cant}{\mathfrak{C}}
\newcommand{\defeq}{\mathrel{:=}}
\newcommand{\length}[1]{\mathopen{|}#1\mathclose{|}}
\newcommand{\modulus}[1]{\mathopen{|}#1\mathclose{|}}
\newcommand{\nbd}{\nobreakdash-}
\newcommand{\set}[2]{\{\,#1\mid#2\,\}}
\newcommand{\spc}{\vspace{\baselineskip}}
\newcommand{\swap}[2]{{(#1\;\;#2)}}
\DeclareMathOperator{\Sym}{Sym}
\DeclareMathOperator{\wt}{wt}
\renewcommand{\emptyset}{\varnothing}
\renewcommand{\epsilon}{\varepsilon}
\renewcommand{\geq}{\geqslant}
\renewcommand{\leq}{\leqslant}
\newcommand{\qed}{\hspace*{\fill}$\square$}
\newenvironment{proof}{%
  \begin{trivlist}
  \item\textsc{Proof:}}{\qed\end{trivlist}}
\renewcommand{\theenumi}{(\roman{enumi})}
\renewcommand{\labelenumi}{{\normalfont\theenumi}}
\newcommand{\leqnomode}{\tagsleft@true}
\newcommand{\reqnomode}{\tagsleft@false}
\newenvironment{HMeqs}{%
  \global\chardef\dc@currentequation=\value{equation}%
  \let\c@equation\c@defcounter
  \align}{%
  \endalign
  \setcounter{equation}{\dc@currentequation}}
\begin{document}

\maketitle

\begin{abstract}
  The higher-dimensional Thompson groups~$nV$, for $n \geq 2$, were
  introduced by Brin in 2005.  We provide new presentations for each
  of these infinite simple groups.  The first is an infinite
  presentation, analogous to the Coxeter presentation for the finite
  symmetric group, with generating set equal to the set of
  transpositions in~$nV$ and reflecting the self-similar structure of
  $n$\nbd dimensional Cantor space.  We then exploit this infinite
  presentation to produce further finite presentations that are
  considerably smaller than those previously known.
\end{abstract}

\paragraph{Keywords:} Thompson's groups, higher-dimensional Thompson
groups, simple groups, Cantor space, presentations, generators and
relations, permutations, transpositions, baker's map

\paragraph{MSC Classification:} 20F65, 20F05, 20E32

\paragraph{Declarations of interest:} none

\section{Introduction}
\label{sec:intro}

A well-known result of Brouwer~\cite{Brouwer} states that a non-empty
totally disconnected compact metrizable space without isolated points
is homeomorphic to the Cantor space~$\Cant$.  Consequently this space
arises throughout mathematics and it is unsurprising that many groups
occur among its homeomorphisms.  Interesting and important examples of
such groups include Grigorchuk's group of intermediate
growth~\cite{Grig80,Grig91}, which may be naturally described as
consisting of certain automorphisms of a binary rooted tree, and various
generalizations, such as the Gupta--Sidki groups~\cite{GS83} and the
multi-GGS groups (see, for example,~\cite{FAZR}); the asynchronous
rational group of Grigorchuk, Nekrashevych and
Sushchanskii~\cite{GNS}; and, particularly relevant to this paper, the
groups $F$,~$T$ and~$V$ introduced by
Richard~J. Thompson~\cite{CFP,Thompson-notes}.

Thompson's group~$F$ is a $2$\nbd generator group with abelianization
isomorphic to a free abelian group of rank~$2$ and such that its
derived subgroup~$F'$ is simple.  The other two groups $T$~and~$V$
introduced by Thompson are both infinite simple groups.  All three
groups are finitely presented, with $F$~having a small presentation
with two generators and two relations.  The presentations for
$T$~and~$V$, as described in~\cite{CFP}, both involve additional
generators and relations to supplement those used for~$F$.  In
particular, Thompson's original presentation for his group~$V$
involved four generators and fourteen relations.  In work by Bleak and
the author~\cite{BQ}, we returned to possible presentations for~$V$.
We give there various presentations for this group: one involving
infinitely many generators and an infinite family of relations.  The
generators in~\cite[Theorem~1.1]{BQ} correspond to transpositions of
certain disjoint basic open sets of Cantor space and the relations are
analogous to the Coxeter presentation for a finite symmetric group,
but also include what we termed ``split relations'' reflecting the
self-similar structure of Cantor space.  The second presentation,
given in~\cite[Theorem~1.2]{BQ}, is a finite presentation, essentially
obtained by reducing the infinite presentation, with three generators
and eight relations (which compares favourably in size to Thompson's
original presentation).  We then produced a two-generator presentation
for~$V$ by use of Tietze transformations and our smallest
presentation, obtained via computational methods, is on two generators
and seven relations~\cite[Theorem~1.3]{BQ}.  One should also note a
link between our infinite presentation for~$V$ and the geometric
presentations given by Dehornoy~\cite{Dehornoy}.

In 2004, Brin~\cite{Brin-higherdim} introduced, for each positive
integer~$n \geq 2$, an analogue of Thompson's group~$V$ that acts upon
an $n$\nbd dimensional version of Cantor space.  He denotes this group
by~$nV$ and, via the homeomorphism $\Cant^{n} \cong \Cant$, this
family provide us with further groups of homeomorphisms of Cantor
space.  Bleak and Lanoue~\cite{BL} noted that two of these
groups $mV$~and~$nV$ are isomorphic if and only if $m = n$.  Brin
observes in his first paper that the group~$2V$ is an infinite simple
group, while in~\cite{Brin-baker} he shows that all the groups~$nV$
are simple.  In the latter argument, he makes considerable reference
to the baker's maps of~$\Cant^{n}$ and notes that these maps can be
expressed as a product of transpositions.  This observation will be
particularly relevant to our proof of Theorem~\ref{thm:infpres} in
Section~\ref{sec:infpres} below.  Furthermore, all these groups are
finitely presented: In~\cite[Theorem~5]{Brin-presentations}, Brin
observes that $2V$~has a finite presentation with $8$~generators and
$70$~relations.  This method was extended by Hennig and
Matucci~\cite{HM} to establish a finite presentation for the
groups~$nV$ involving $2n+4$~generators and $10n^{2} + 10n +
10$~relations (see~\cite[Theorem~25]{HM}).  Indeed, each group~$nV$ is
of type~$\mathrm{F}_{\infty}$, as established in~\cite{KMPN,FMWZ}.  In
this article, it is demonstrated that these groups possess infinite
presentations involving elements corresponding to transpositions of
disjoint basic open sets and involving relations that have a
Coxeter-like shape and reflect the self-similar nature of~$\Cant^{n}$
(see Theorem~\ref{thm:infpres} below).  Again these infinite
presentations bear comparison with Dehornoy's geometric
presentations~\cite{Dehornoy} for $F$~and~$V$.  In the final section
of the paper, we demonstrate that the group~$nV$ is isomorphic to a
group~$G$ with a finite presentation involving $3$~generators and
$2n^{2} + 3n + 11$~relations.  It is noteworthy that the number of
generators is bounded independent of the parameter~$n$ and that the
number of relations significantly improves upon the presentations
in~\cite{Brin-presentations,HM}.  In particular, the resulting finite
presentation for~$2V$ involves $3$~generators and $25$~relations.

\subsection{Notation}

We write~$\Cant$ for the Cantor set; that is, the collection of all
infinite words from the alphabet~$\{0,1\}$.  We also use the
set~$\{0,1\}^{\ast}$ of finite words in this alphabet and
write~$\epsilon$ to denote the empty word.  If $\alpha,\beta \in
\{0,1\}^{\ast}$, then $\alpha \preceq \beta$ indicates that
$\alpha$~is a prefix of~$\beta$.  On the other hand, the notation
$\alpha \perp \beta$ denotes that $\alpha \npreceq \beta$ and $\alpha
\nsucceq \beta$ and we then say these words are \emph{incomparable}.
The \emph{length} of a finite word~$\alpha$, denoted
by~$\length{\alpha}$, is the number of symbols from~$\{0,1\}$
occurring in~$\alpha$.

If $n$~is a positive integer with $n \geq 2$, the higher-dimensional
Thompson group~$nV$ is defined (see below) as consisting of certain
transformations defined on $n$\nbd dimensional Cantor space $\Gamma =
\Cant^{n}$.  Accordingly, we shall also need the set~$\Omega$ of
sequences $\bm{\alpha} = (\alpha_{1},\alpha_{2},\dots,\alpha_{n})$
where each~$\alpha_{i} \in \{0,1\}^{\ast}$ and we use the term
\emph{address} to refer to elements of~$\Omega$.  These addresses are
used to index the basic open subsets of~$\Gamma$ which in turn appear
in the definition of the elements of~$nV$.  We extend the concept of
incomparability to addresses by writing $\bm{\alpha} \perp
\bm{\beta}$, for a pair of addresses $\bm{\alpha} =
(\alpha_{1},\alpha_{2},\dots,\alpha_{n})$ and $\bm{\beta} =
(\beta_{1},\beta_{2},\dots,\beta_{n})$, when $\alpha_{d} \perp
\beta_{d}$ for some index~$d$ with $1 \leq d \leq n$.  Similarly, for
such addresses, we write $\bm{\alpha} \preceq \bm{\beta}$ when
$\alpha_{d} \preceq \beta_{d}$ for all $d = 1$,~$2$, \dots,~$n$.

The higher-dimensional Thompson group~$nV$ consists of certain
homeomorphisms of~$\Gamma$.  We shall use right action notation
throughout and so write~$\bm{w}g$ for the image of $\bm{w} \in \Gamma$
under $g \in nV$.  Let $\Gamma(\bm{\alpha}) = \set{\bm{\alpha
    w}}{\bm{w} \in \Gamma}$ be the collection of all sequences
in~$\Gamma$ with the address~$\bm{\alpha}$ as prefix and this is the
basic open set indexed by~$\bm{\alpha}$.  Note $\Gamma(\bm{\alpha})
\cap \Gamma(\bm{\beta}) = \emptyset$ if and only if $\bm{\alpha} \perp
\bm{\beta}$ and that $\Gamma(\bm{\alpha}) \supseteq
\Gamma(\bm{\beta})$ if and only if $\bm{\alpha} \preceq \bm{\beta}$.
An element~$g$ of~$nV$ is then described as follows: Given two
partitions $\Gamma = \bigcup_{i=1}^{k} \Gamma(\bm{\alpha}^{(i)}) =
\bigcup_{i=1}^{k} \Gamma(\bm{\beta}^{(i)})$ into the same number of
disjoint basic open sets, we define the homeomorphism~$g$ of~$\Gamma$
by $\bm{\alpha}^{(i)}\bm{w} \mapsto \bm{\beta}^{(i)}\bm{w}$ for $i =
1$,~$2$, \dots,~$k$ and any $\bm{w} \in \Gamma$.  Thus each
homeomorphism in~$nV$ is given by piecewise affine maps on~$\Gamma$
determined by two partitions of the space into the same number of
basic open sets and some bijection between the parts.
Figure~\ref{fig:partition} illustrates an example partition
of~$\Cant^{3}$; that is, a potential choice for domain or codomain
partially determining an element of~$3V$.  If
$\bm{\alpha}$~and~$\bm{\beta}$ are incomparable addresses, we call the
element of~$nV$ that maps $\bm{\alpha w} \mapsto \bm{\beta w}$,
\ $\bm{\beta w} \mapsto \bm{\alpha w}$ for all $\bm{w} \in \Gamma$ and
fixes all other points in~$\Gamma$ a \emph{transposition}.  This
element has the effect of interchanging the basic open sets
$\Gamma(\bm{\alpha})$~and~$\Gamma(\bm{\beta})$.  In
what follows, we shall write~$G_{\infty}$ for the group with the
presentation given in Theorem~\ref{thm:infpres} below.  The element
denoted by~$\swap{\bm{\alpha}}{\bm{\beta}}$ that appears in that
presentation corresponds, under the natural homomorphism $G_{\infty}
\to nV$, to this transposition that interchanges
$\Gamma(\bm{\alpha})$~and~$\Gamma(\bm{\beta})$.

\begin{figure}
  \begin{center}
    \begin{tikzpicture}
      \draw (0, 0, 4) -- (4, 0, 4);
      \draw (0, 4, 4) -- (4, 4, 4);
      \draw (0, 0, 4) -- (0, 4, 4);
      \draw (4, 0, 4) -- (4, 4, 4);
      \draw (4, 0, 4) -- (4, 0, 0);
      \draw (4, 4, 4) -- (4, 4, 0);
      \draw (0, 4, 4) -- (0, 4, 0);
      \draw (0, 4, 0) -- (4, 4, 0);
      \draw (4, 0, 0) -- (4, 4, 0);
      \draw (0, 1, 4) -- (4, 1, 4) -- (4, 1, 0);
      \draw (0, 2, 4) -- (4, 2, 4) -- (4, 2, 0);
      \draw (2, 2, 4) -- (2, 4, 4) -- (2, 4, 3) -- (4, 4, 3) -- (4, 2, 3);
      \draw (2, 4, 3) -- (2, 4, 2) -- (4, 4, 2) -- (4, 2, 2);
      \draw (3, 4, 2) -- (3, 4, 0);
      \draw (2, 4, 2) -- (2, 4, 0);
      \node at (2,.5,4) {$\bm{00}_{3}$};
      \node at (2,1.5,4) {$\bm{01}_{3}$};
      \node at (1,3,4) {$\bm{0}_{1}.\bm{1}_{3}$};
      \node at (3,3,4) {$\bm{1}_{1}.\bm{00}_{2}.\bm{1}_{3}$};
      \node at (3,4,2.5) {$\bm{1}_{1}.\bm{01}_{2}.\bm{1}_{3}$};
      \node[above right] at (4,3.3,2.5) {$\bm{11}_{1}.\bm{1}_{2}.\bm{1}_{3}$};
      \node at (2.5,4,1) {$\bm{10}_{1}.\bm{1}_{2}.\bm{1}_{3}$};
    \end{tikzpicture}
  \end{center}
  \caption{A domain or codomain partition of~$\Gamma = \Cant^{3}$}
  \label{fig:partition}
\end{figure}

To describe an address in~$\Omega$ in theory requires one to write a
sequence of $n$~finite words in~$\{0,1\}$.  Such a sequence would
appear quite cumbersome in our calculations particularly when
appearing as entries in the transpositions that we work with.
Accordingly, we present a more compact and useful notation.  If
$\alpha$~is some (usually explicit) finite word in~$\{0,1\}$, we shall
write~$\bm{\alpha}_{d}$ for the address all of whose entries are the
empty word with the exception of the $d$th coordinate which
equals~$\alpha$.  Thus, for example, $\bm{010}_{d} =
(\epsilon,\dots,\epsilon,010,\epsilon,\dots,\epsilon)$ where
$010$~occurs in the $d$th coordinate in this $n$\nbd tuple.  We shall
particularly make use of this notation when we wish to append one (or
more) letters from~$\{0,1\}$ to particular entries in an address
$\bm{\alpha} = (\alpha_{1},\alpha_{2},\dots,\alpha_{n})$.  For
example, we write $\bm{\alpha}.\bm{0}_{d}$ to indicate that we
concatenate the addresses $\bm{\alpha}$~and~$\bm{0}_{d}$; that is, we
append the symbol~$0$ to the $d$th coordinate~$\alpha_{d}$
of~$\bm{\alpha}$:
\[
\bm{\alpha}.\bm{0}_{d} =
(\alpha_{1},\dots,\alpha_{d-1},\alpha_{d}0,\alpha_{d+1},\dots,\alpha_{n})
\]
(The use of the dot appearing this notation is to demarcate the end of
the first address~$\bm{\alpha}$ and the beginning of the second and is
intended to achieve clarity.  Indeed, according to our notation,
$\bm{\alpha0}_{d}$ (without the dot) would indicate the address with a
single non-empty entry~$\alpha0$ in the $d$th coordinate.  The dot
notation is unnecessary when concatenating two finite words in~$\{0,1\}$
but helps when dealing with $n$\nbd tuples.)  The use of this notation
can be observed within what we term the ``split relations'' appearing
in the statement of Theorem~\ref{thm:infpres} below (see
Equation~\eqref{rel:split}) and in the addresses labelling the parts
in Figure~\ref{fig:partition}.  An additional piece of notation that
we shall use is that if $x \in \{0,1\}$, then $\bar{x}$~denotes the
other element in this set and then, following our above convention,
$\bar{\bm{x}}_{d}$~is the sequence
$(\epsilon,\dots,\epsilon,\bar{x},\epsilon,\dots,\epsilon)$ where
$\bar{x}$~occurs in the $d$th coordinate.  Finally,
$\bm{\epsilon}$~will denote the address
$(\epsilon,\epsilon,\dots,\epsilon)$ all of whose entries are the
empty word.

To specify the relations that define our group, we define an
additional notation that encodes the partial action of transpositions
in~$nV$ upon the basic open sets indexed by the addresses
in~$\Omega$.  To be specific, if $\bm{\alpha},\bm{\beta},\bm{\gamma}
\in \Omega$ with $\bm{\alpha} \perp \bm{\beta}$, we define a partial
map by
\begin{equation}
  \bm{\gamma} \bullet \swap{\bm{\alpha}}{\bm{\beta}} =
  \begin{cases}
    \bm{\beta}\bm{\delta} &\text{if $\bm{\gamma} =
      \bm{\alpha}\bm{\delta}$ for some $\bm{\delta} \in \Omega$;} \\
    \bm{\alpha}\bm{\delta} &\text{if $\bm{\gamma} =
      \bm{\beta}\bm{\delta}$ for some $\bm{\delta} \in \Omega$;} \\
    \bm{\gamma} &\text{if both $\bm{\gamma} \perp \bm{\alpha}$ and
      $\bm{\gamma} \perp \bm{\beta}$;} \\
    \text{undefined} &\text{otherwise}.
  \end{cases}
  \label{eq:actiondef}
\end{equation}
Thus we associate to the symbol~$\swap{\bm{\alpha}}{\bm{\beta}}$ the
partial map on the set~$\Omega$ of addresses that performs a prefix
substitution that interchanges the prefix~$\bm{\alpha}$ with the
prefix~$\bm{\beta}$.

\subsection{Statement of results}

In Section~\ref{sec:infpres}, we shall establish the following
infinite presentation for~$nV$.

\begin{thm}
  \label{thm:infpres}
  Let $n \geq 2$.  Let $\mathcal{A}$~be the set of all
  symbols~$\swap{\bm{\alpha}}{\bm{\beta}}$ where
  $\bm{\alpha}$~and~$\bm{\beta}$ are addresses in\/~$\Omega$ with
  $\bm{\alpha} \perp \bm{\beta}$.  Then Brin's higher-dimensional
  Thompson group~$nV$ has infinite presentation with generating
  set~$\mathcal{A}$ and all relations
  \begin{align}
    \swap{\bm{\alpha}}{\bm{\beta}}^{2} &= 1
    \label{rel:order} \\*
    \swap{\bm{\alpha}}{\bm{\beta}}^{\swap{\bm{\gamma}}{\bm{\delta}}}
    &= \swap{\bm{\alpha}\!\bullet\!\swap{\bm{\gamma}}{\bm{\delta}}}{%
      \bm{\beta}\!\bullet\!\swap{\bm{\gamma}}{\bm{\delta}}}
    \label{rel:conj} \\*
    \swap{\bm{\alpha}}{\bm{\beta}} &=
    \swap{\bm{\alpha}.\bm{0}_{d}}{\bm{\beta}.\bm{0}_{d}} \;
    \swap{\bm{\alpha}.\bm{1}_{d}}{\bm{\beta}.\bm{1}_{d}}
    \label{rel:split}
  \end{align}
  where $\bm{\alpha}$,~$\bm{\beta}$, $\bm{\gamma}$ and~$\bm{\delta}$
  range over all addresses in~$\Omega$ such that $\bm{\alpha} \perp
  \bm{\beta}$, \ $\bm{\gamma} \perp \bm{\delta}$ and such that both
  $\bm{\alpha} \bullet \swap{\bm{\gamma}}{\bm{\delta}}$ and
  $\bm{\beta} \bullet \swap{\bm{\gamma}}{\bm{\delta}}$ are defined,
  and $d$~ranges over all indices with $1 \leq d \leq n$.
\end{thm}

We shall refer to relations of the form~\eqref{rel:conj} as
``conjugacy relations'' and those of the form~\eqref{rel:split} as
``split relations'' in what follows.  The latter arise due to the
self-similar nature of Cantor space: to exchange prefixes
$\bm{\alpha}$~and~$\bm{\beta}$ is equivalent to exchanging both the
pairs of prefixes obtained by ``splitting'' the $d$th coordinate.
Note that we use exponential notation for conjugation writing~$g^{h}$
for~$h^{-1}gh$ where $g$~and~$h$ belong to some group and this is
consistent with our use of right actions.

We shall also need an additional relation that can be deduced
immediately from~\eqref{rel:conj}.  On the face of it, if
$\bm{\alpha}$~and~$\bm{\beta}$ are incomparable addresses in~$\Omega$,
the symbols
$\swap{\bm{\alpha}}{\bm{\beta}}$~and~$\swap{\bm{\beta}}{\bm{\alpha}}$
are not necessarily the same element of the group with the given
presentation.  However, we expect them to correspond to the same
transposition in~$nV$.  The ``symmetry relation''
\begin{equation}
  \swap{\bm{\alpha}}{\bm{\beta}} = \swap{\bm{\beta}}{\bm{\alpha}}
  \label{rel:symm} \\
\end{equation}
follows from~\eqref{rel:conj} by taking $\bm{\gamma} = \bm{\alpha}$
and $\bm{\delta} = \bm{\beta}$:
\[
\swap{\bm{\alpha}}{\bm{\beta}} =
\swap{\bm{\alpha}}{\bm{\beta}}^{%
  \swap{\bm{\alpha}}{\bm{\beta}}} =
\swap{\bm{\alpha} \!\bullet\! \swap{\bm{\alpha}}{\bm{\beta}}}{%
  \bm{\beta} \!\bullet\! \swap{\bm{\alpha}}{\bm{\beta}}} =
\swap{\bm{\beta}}{\bm{\alpha}}.
\]
We shall make use of this additional relation~\eqref{rel:symm}
throughout our arguments.

The method of proof of the above theorem is essentially to verify a
family of relations for~$nV$ originally found in~\cite{HM}.  Let
$G_{\infty}$~denote the group with presentation given in
Theorem~\ref{thm:infpres}.  The key steps in the proof in
Section~\ref{sec:finitepres} are to define and investigate elements
in~$G_{\infty}$ that correspond to baker's maps.  The two-dimensional
baker's map is a basic object within the study of dynamical systems
(see, for example,~\cite{Devaney}) and is illustrated in
Figure~\ref{fig:baker}(i).  In the context of $n$~dimensions, we shall
refer to baker's maps that arise in the domain from a cut in the first
coordinate and in the codomain from a cut in the $d$th coordinate.
Thus we define, in~$G_{\infty}$, an element~$B_{d}(\bm{\alpha})$ that
corresponds to the element of~$nV$ with support equal
to~$\Gamma(\bm{\alpha})$ mapping $\Cant^{n} \to \Cant^{n}$ via the
formula
\[
\bm{w} \mapsto \begin{cases}
  \bm{\alpha}.\bm{0}_{d}.\bm{u} &\text{if $\bm{w} =
    \bm{\alpha}.\bm{0}_{1}.\bm{u}$ for some $\bm{u} \in \Cant^{n}$,} \\
  \bm{\alpha}.\bm{1}_{d}.\bm{u} &\text{if $\bm{w} =
    \bm{\alpha}.\bm{1}_{1}.\bm{u}$ for some $\bm{u} \in \Cant^{n}$,}
  \\
  \bm{w} &\text{otherwise (that is, if $\bm{\alpha} \npreceq
    \bm{w}$).}
\end{cases}
\]
When we refer below to the element~$B_{d}(\bm{\alpha})$ evaluating to
an ``index~$d$'' baker's map, we mean that it evaluates to the
homeomorphism of~$\Cant^{n}$ given by this formula.  All baker's maps
arising within our work will have such a form (for some address
$\bm{\alpha} \in \Omega$ and some $d \geq 2$).

\begin{figure}
  \begin{center}
    \begin{tabular}{cc}
      \begin{tikzpicture}
        \draw (0,0) -- (2,0) -- (2,2) -- (0,2) -- cycle;
        \draw (1,0) -- (1,2);
        \node at (.5,1) {$0$};
        \node at (1.5,1) {$1$};
        \draw[->] (2.5,1) -- (3.5,1);
        \draw (4,0) -- (6,0) -- (6,2) -- (4,2) -- cycle;
        \draw (4,1) -- (6,1);
        \node at (5,.5) {$0$};
        \node at (5,1.5) {$1$};
      \end{tikzpicture}
      &\qquad\quad
      \begin{tikzpicture}
        \draw (0,0,2) -- (2,0,2) -- (2,2,2) -- (0,2,2) -- cycle;
        \draw (2,0,2) -- (2,0,0) -- (2,2,0) -- (2,2,2);
        \draw (0,2,2) -- (0,2,0) -- (2,2,0);
        \draw (1,0,2) -- (1,2,2) -- (1,2,0);
        \node at (.5,1,2) {$0$};
        \node at (1.5,1,2) {$1$};
        \draw[->] (2.8,1,1) -- (4.1,1,1);
        \draw (4.9,0,2) -- (6.9,0,2) -- (6.9,2,2) -- (4.9,2,2) -- cycle;
        \draw (6.9,0,2) -- (6.9,0,0) -- (6.9,2,0) -- (6.9,2,2);
        \draw (4.9,2,2) -- (4.9,2,0) -- (6.9,2,0);
        \draw (4.9,2,1) -- (6.9,2,1) -- (6.9,0,1);
        \node at (5.9,1,2) {$0$};
        \node at (5.9,2,.5) {$1$};
      \end{tikzpicture}
    \end{tabular}
  \end{center}
  \caption{Baker's maps when (i)~$d = 2$ and (ii)~$d = 3$}
  \label{fig:baker}
\end{figure}

The elements~$B_{d}(\bm{\alpha})$ in~$G_{\infty}$ are defined via the
formulae expressing baker's maps in terms of transpositions found
in~\cite{Brin-baker}.  In Section~\ref{sec:infpres}, we observe that
the behaviour of baker's maps can be deduced from the relations
assumed about transpositions.
Lemmas~\ref{lem:prebaker}--\ref{lem:fullbaker} give the properties
upon which we depend.  In summary, while Brin~\cite{Brin-baker}
establishes simplicity of~$nV$ by expressing baker's maps as products
of transpositions, we use relational properties between transpositions
to produce information about baker's maps to establish our
presentation in Theorem~\ref{thm:infpres}.

In Section~\ref{sec:finitepres}, we reduce our infinite presentation
to a finite presentation (the relations are those listed
in~\ref{rel:R1-Sym}--\ref{rel:R7-c-def} in that section):

\begin{thm}
  \label{thm:finitepres}
  Let $n \geq 2$.  Brin's higher-dimensional Thompson group~$nV$ has a
  finite presentation with three generators and $2n^{2} + 3n + 11$
  relations.
\end{thm}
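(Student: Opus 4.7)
The strategy is a Tietze-transformation reduction of the infinite presentation from Theorem~\ref{thm:infpres}, adapting to the $n$-dimensional setting the approach taken in~\cite{BQ} for~$V$. I would first single out three generators of $G_{\infty}$: a basic transposition, say $a = \swap{\bm{0}_{1}}{\bm{1}_{1}}$; a ``shift'' element $b$ acting on the first coordinate like a standard generator of Thompson's~$F$; and a ``cross-coordinate'' element $c$ chosen so that, in conjunction with $b$, it realises baker's-map elements $B_{d}(\bm{\epsilon})$ for every $d = 2, \ldots, n$. These choices must be made so that every transposition $\swap{\bm{\alpha}}{\bm{\beta}}$ in $\mathcal{A}$ becomes expressible as a conjugate of $a$ by a product of baker's maps lying in $\langle a, b, c \rangle$.

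Next, I would write down the finite list R1--R7 of $2n^{2} + 3n + 11$ relations. The expected shape is: the order-two relation $a^{2} = 1$ together with a small family of $F$-type relations for $b$ (accounting for most of the constant~$11$); the single split relation~\eqref{rel:split} expressed at the empty address in each of the $n$ coordinates, contributing part of the $3n$ term; commutation and conjugation identities between $a$ and appropriate conjugates of $b, c$ that encode the Coxeter-like structure, contributing the remainder of~$3n$; and the interactions among the cross-coordinate baker's-map elements $B_{2}, \ldots, B_{n}$ together with their interplay with $b$, which contribute the $2n^{2}$ term. Precisely which relations are required, and no more, will emerge from the derivations below.

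To prove that this finite list suffices, I would construct homomorphisms in both directions between $G_{\infty}$ and the finitely presented group $H$. The direction $H \to G_{\infty}$ is immediate once one verifies that the chosen words in $a, b, c$ satisfy R1--R7, which follows from Theorem~\ref{thm:infpres}. For the reverse direction $\psi \colon G_{\infty} \to H$, I would send each $\swap{\bm{\alpha}}{\bm{\beta}}$ to an explicit word $w(\bm{\alpha}, \bm{\beta})$ in $\{a, b, c\}$ constructed by conjugating $a$ by baker's-map words indexed by $\bm{\alpha}$ and $\bm{\beta}$, and then show that all four relation families~\eqref{rel:symm}--\eqref{rel:split} of $G_{\infty}$ hold in $H$. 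The essential preparatory step is to reprove, inside $H$, the baker's-map properties catalogued in Lemmas~\ref{lem:prebaker}--\ref{lem:fullbaker}; once these are available, the infinite families of conjugacy and split relations reduce by induction on address length to consequences of R1--R7. The main obstacle is precisely this inductive verification: arranging R1--R7 so that the baker's-map calculus extends to arbitrary addresses without any additional relation, which is what pins down the exact count $2n^{2}+3n+11$ rather than something larger.
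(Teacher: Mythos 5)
Your high-level strategy (Tietze reduction, homomorphisms in both directions, re-establishing the baker's-map calculus in the finite presentation) is sound in outline, but the specific generating set and relation families you propose are genuinely different from the paper's and, as described, would not yield the claimed count. The paper's three generators are not ``transposition, $F$-shift, cross-coordinate element''. Rather, two of the three generators $a, b$ are chosen so that $\langle a,b\rangle$ is the \emph{full symmetric group of degree $4^{n}$} acting on the set $\Delta$ of addresses whose coordinates all have length exactly $2$, with R1 being the eight relations of Guralnick--Kantor--Kassabov--Lubotzky that present this symmetric group with a bounded (in fact constant) number of relations; the third generator $c$ is a single specific product of $2n$ transpositions (R7). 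This GKKL input is not optional: it is what makes the constant in $2n^{2}+3n+11$ independent of $n$. Your proposal omits it entirely and instead suggests $F$-type relations for $b$, which would not plausibly give a constant-size family accounting for the full symmetric-group structure that the rest of the argument relies on.

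The relation count also decomposes differently from what you predict. In the paper the $2n^{2}$-ish part ($2n(n-1)$ relations, R5) does not come from ``interactions among cross-coordinate baker's maps''; it comes from a well-definedness constraint, namely that building a transposition by lengthening two different coordinates of an address, in either order, yields the same group element. The linear part ($5n$ relations, R3, R4 and R6) handles commutation with the appropriate subgroups of $\Sym(\Delta)$ and the $n$ basic split relations, and the constant ($11$ relations, R1, R2 and R7) comes from GKKL plus two commutator relations plus the defining equation for $c$. The actual proof then proceeds not by ``conjugating $a$ by baker's-map words'' but by a double induction on a weight $(m,k)$ attached to addresses, building transpositions $\swap{\bm{\alpha}}{\bm{\beta}}$ for $\bm{\alpha}$ with one longer coordinate at a time (conjugating transpositions from $\Sym(\Delta)$), first with one entry in $\Delta$ and then between two arbitrary addresses, and verifying conjugacy and split relations case by case via Lemma~\ref{lem:conj-types}. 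Baker's maps play no role whatsoever in the proof of Theorem~\ref{thm:finitepres} — they were needed only to establish Theorem~\ref{thm:infpres} and the isomorphism $G_{\infty}\cong nV$, which is then invoked once at the end. So the deferred ``main obstacle'' you identify (re-deriving Lemmas~\ref{lem:prebaker}--\ref{lem:fullbaker} inside $H$) is not the right obstacle; the real work is the weight-induction showing that the partial symmetric group on $\Delta$ together with $c$ generates a well-defined family of transpositions satisfying relations~\eqref{rel:symm}--\eqref{rel:split}.
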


To prove this theorem, we begin with transpositions with entries from
the set~$\Delta$ of addresses $\bm{\alpha} =
(\alpha_{1},\alpha_{2},\dots,\alpha_{n})$ with $\length{\alpha_{d}} =
2$ for $1 \leq d \leq n$.  Thus, as a base point in an induction
argument we assume that we have a subgroup isomorphic to (a quotient
of) the symmetric group of degree~$4^{n}$.  In an induction argument,
we build further transpositions by successively conjugating the
transpositions constructed at a previous stage and then finally
exploit the split relations~\eqref{rel:split} to complete the
definitions.  In this way, we demonstrate that the group~$G$ with the
presentations provided in Theorem~\ref{thm:finitepres} is a quotient
of the group~$G_{\infty}$ described in Theorem~\ref{thm:infpres}.

Finally, by applying Tietze transformations, we shall deduce:

\begin{cor}
  \label{cor:2gen}
  Let $n \geq 2$.  Brin's higher-dimensional Thompson group~$nV$ has a
  finite presentation with two generators and $2n^{2} + 3n + 13$
  relations.
\end{cor}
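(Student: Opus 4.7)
The plan is to reduce the three-generator presentation supplied by Theorem~\ref{thm:finitepres} to a two-generator one by means of Tietze transformations, in the same spirit as the reduction carried out for Thompson's group~$V$ in~\cite{BQ}.  Write $a$,~$b$,~$c$ for the three generators of that presentation, and let $\mathcal{R}$ denote the set of $2n^{2}+3n+11$ relations listed as R1--R7 in Section~\ref{sec:finitepres}.

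The first step is to introduce two auxiliary generators $u$ and~$v$, each defined as a carefully chosen word $w_{u}(a,b,c)$ and $w_{v}(a,b,c)$ in the existing generators.  Adjoining these Tietze-style, together with their two defining relations, yields an equivalent presentation on five generators and $2n^{2}+3n+13$ relations.  The key claim to be established is then that each of $a$,~$b$ and~$c$ can be recovered as a word in $u$ and~$v$: namely, that there exist words $p_{a}$,~$p_{b}$,~$p_{c}$ for which the equalities $a = p_{a}(u,v)$, \ $b = p_{b}(u,v)$ and $c = p_{c}(u,v)$ hold as consequences of the relations in~$\mathcal{R}$ together with the two defining relations for $u$~and~$v$.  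Once these three expressions are adjoined as consequence relations and then used, in turn, to eliminate the original generators $a$,~$b$,~$c$, the book-keeping balances exactly: three new relations are added against three removed (one per elimination), leaving a presentation on the two generators $u$~and~$v$ with $2n^{2}+3n+13$ relations, as claimed.

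The main obstacle will be selecting $w_{u}$~and~$w_{v}$ so that this recovery is possible at all.  A natural economy is to retain one of the original generators, for instance by taking $u = a$, and to let $v$ be a suitable product involving $b$ and~$c$, possibly combined with a conjugating element such as one of the baker's-map-type generators constructed in Section~\ref{sec:infpres}; the task is then to use the Coxeter-style conjugacy relations and the split relations from Section~\ref{sec:finitepres} to disentangle this product and isolate $b$ and~$c$ individually as words in $u$ and~$v$.  The precise choice of $w_{u}$ and~$w_{v}$ will be modelled on the analogous reduction for Thompson's group~$V$ in~\cite{BQ}, and once the three recovery expressions $p_{a}$,~$p_{b}$,~$p_{c}$ are in hand, the Tietze manipulations themselves are routine.
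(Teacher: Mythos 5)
Your Tietze book-keeping is exactly right: introduce two new generators (two new defining relations), adjoin three consequence relations expressing the original generators, then eliminate the three originals together with their defining relations, for a net gain of two relations and a final count of $2n^{2}+3n+13$.  This matches the paper's accounting.  However, the heart of the corollary --- producing explicit two-element generating sets and proving the recovery of $a$, $b$, $c$ --- is precisely what you defer, and the direction you gesture at is not the one that works.  Taking $u=a$ and trying to ``disentangle'' a product $v$ of $b$ and $c$ via conjugacy and split relations is a genuine gap: there is no reason to expect a word in $b$ and $c$ to be separable that way, and no construction is given.

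The argument the paper actually uses is much more structural and hinges on two small observations you do not make.  First, the subgroup $H=\langle a,b\rangle\cong\Sym(\Delta)$ can be regenerated by a $4^{n}$-cycle $x$ and a transposition $t$, and because $c$ (as defined by Relation~R7) moves only $2n+1$ of the $4^{n}$ addresses, $t$ can be chosen disjoint from, and hence commuting with, $c$.  Second, $c$ is a product of $2n$ transpositions sharing a common entry and therefore has odd order $2n+1$.  A commuting product of an odd-order element and an involution has both factors as powers of the product: if $y=ct$, then $y^{2n+1}=t$ and $y^{2n+2}=c$.  Thus $G=\langle x,t,c\rangle=\langle x,y\rangle$, and both $c$ and $t$ are recovered as explicit powers of $y$, with $x$ recovered from $a,b$ via the $\Sym(\Delta)$ identifications.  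This is the missing generation argument; without it (or some substitute), your proposal does not establish the corollary.
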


\paragraph{Remarks:} In common with the finite presentation given by
Hennig--Matucci~\cite{HM}, the number of relations we use is quadratic
in the dimension~$n$.  One could ask whether there are presentations
for~$nV$ on two generators, but where the number of relations grows at
most linearly in~$n$?  In our case, the quadratic function arises from
the family of Relations~\ref{rel:R5-welldef} which is used to ensure
that the well-definedness of
transpositions~$\swap{\bm{\alpha}}{\bm{\beta}}$ where two coordinates
of~$\bm{\alpha}$ have length~$3$ and all remaining coordinates
of~$\bm{\alpha}$ and all those of~$\bm{\beta}$ have length~$2$.
Although the growth in the number of relations relative to the
dimension of the space acted upon seems reasonable, surprising results
such as that of Guralnick--Kantor--Kassabov--Lubotzky~\cite{GKKL}
stand in contrast to expectations.  The arguments used in~\cite{GKKL}
employ a process that has been termed the \emph{Burnside procedure}
and presented in detail in the Appendix of~\cite{MPMN}.  This process
can be used to reduce some large presentation of a group to a much
smaller one.  Potentially  it could be applied to the infinite
presentation found in Theorem~\ref{thm:infpres} and one might wonder
how the result would compare with Theorem~\ref{thm:finitepres}.  It
seemed to the author that the most direct application of the Burnside
procedure (if successful) would likely result in more relations.
Nevertheless, it remains an interesting question whether smaller
presentations exist for Brin's groups~$nV$.

\section{The infinite presentation for~\boldmath$nV$}
\label{sec:infpres}

We devote this section to establishing Theorem~\ref{thm:infpres}.
Accordingly we define $G_{\infty}$~to be the group presented by the
generators $\mathcal{A} = \set{ \swap{\bm{\alpha}}{\bm{\beta}}}{
  \bm{\alpha},\bm{\beta} \in \Omega, \; \bm{\alpha} \perp \bm{\beta}}$
subject to the family of
relations~\eqref{rel:order}--\eqref{rel:split}.  In this context, we
shall use the term \emph{transposition} for any
element~$\swap{\bm{\alpha}}{\bm{\beta}}$ appearing in the generating
set~$\mathcal{A}$.  It was observed by Brin~\cite{Brin-baker} that the
group~$nV$ is generated by the corresponding transpositions of basic
open sets of~$\Gamma$.  It is readily verified that these
homeomorphisms satisfy the relations listed in
Theorem~\ref{thm:infpres}.  Hence there exists a surjective
homomorphism $\phi \colon G_{\infty} \to nV$ that
maps~$\swap{\bm{\alpha}}{\bm{\beta}}$ to the corresponding
transposition in~$nV$.  In what follows, we shall speak of
\emph{evaluating} a product~$g$ in~$nV$ to mean the effect of applying
the homomorphism~$\phi$ to the element $g \in G_{\infty}$.

We can extend the definition appearing in
Equation~\eqref{eq:actiondef} to a product~$g$ of transpositions, say
$g = g_{1}g_{2}\dots g_{k}$ where each~$g_{i} \in \mathcal{A}$, by
defining~$\bm{\alpha}\bullet g$ to equal the value obtained by
successively applying Equation~\eqref{eq:actiondef} with each
transposition~$g_{i}$.  Note that this is strictly speaking a function
of the word in~$\mathcal{A}$ representing~$g$ rather than depending
upon~$g$ \emph{as an element of~$G_{\infty}$}.  With this extended
definition, if $\bm{\alpha},\bm{\beta} \in \Omega$ with $\bm{\alpha}
\perp \bm{\beta}$ and $g \in G_{\infty}$ is expressed as a product of
transpositions in such that both $\bm{\alpha} \bullet g$ and
$\bm{\beta} \bullet g$ are defined, then it follows by repeated use of
the conjugacy relations~\eqref{rel:conj} that
\[
\swap{\bm{\alpha}}{\bm{\beta}}^{g} = g^{-1} \,
\swap{\bm{\alpha}}{\bm{\beta}} \, g =
\swap{\bm{\alpha}\!\bullet\!g}{\bm{\beta}\!\bullet\!g}.
\]

Note that $\bm{\alpha} \bullet g$, when it is defined, coincides with
the value obtained if the product~$g$ is evaluated as an element of
the Brin's higher-dimensional Thompson group~$nV$ and then
$\bm{\alpha} \bullet g$~is calculated via the natural partial action
of~$nV$ upon the addresses~$\Omega$.  The only difference is that
there may exist some addresses~$\bm{\alpha}$ for which $\bm{\alpha}
\bullet g$~is not defined for our given word representing~$g$ but for
which the corresponding transformation in~$nV$ does have an action
defined upon~$\bm{\alpha}$.  However, if $\bm{\alpha} =
(\alpha_{1},\alpha_{2},\dots,\alpha_{n})$ and \emph{provided} the
words~$\alpha_{i}$ are sufficiently long, then $\bm{\alpha} \bullet
g$~is defined and hence coincides with the value obtained via the
partial action of~$nV$ upon~$\Omega$.

We shall establish that $G_{\infty}$~is isomorphic to the group~$nV$
by demonstrating that a family of relations found within
Hennig--Matucci's work~\cite{HM} can be deduced from our defining
relations.  We shall define elements of our group~$G_{\infty}$ that
correspond to the family of generators that are used in~\cite{HM}.
Some are readily constructed using
transpositions~$\swap{\bm{\alpha}}{\bm{\beta}}$ but others depend upon
building analogues of the baker's maps.  Brin's
paper~\cite{Brin-baker} describes how to construct a baker's map from
transpositions in his Lemma~3.  By following this recipe we are able
to define the required elements of~$G_{\infty}$.  Moreover, it then
follows that the products of transformations we define evaluate to the
required baker's maps in~$nV$ and hence we can determine the value
of~$\bm{\alpha} \bullet g$ for such products~$g$ provided the
coordinates of~$\bm{\alpha}$ are sufficiently long.

If $\bm{\alpha},\bm{\beta} \in \Omega$ with $\bm{\alpha} \perp
\bm{\beta}$ and $d$~is an index with $2 \leq d \leq n$, we define
\begin{equation}
  A_{d}(\bm{\alpha},\bm{\beta}) =
  \swap{\bm{\alpha}.\bm{0}_{1}}{\bm{\beta}.\bm{0}_{d}} \;
  \swap{\bm{\alpha}.\bm{1}_{1}}{\bm{\beta}.\bm{1}_{d}} \;
  \swap{\bm{\alpha}}{\bm{\beta}}.
  \label{eq:A-def}
\end{equation}
We define further elements of~$G_{\infty}$ in terms of this product as
follows:
\begin{equation}
  \begin{aligned}
    \hat{B}_{d}(\bm{0}_{1}.\bm{0}_{d}) &=
    A_{d}(\bm{0}_{1},\bm{\gamma}) \;\,
    \swap{\bm{0}_{1}.\bm{01}_{d}}{\bm{0}_{1}.\bm{10}_{d}} \;\,
    A_{d}(\bm{\gamma},\bm{0}_{1}.\bm{1}_{d}) \\*
    \hat{B}_{d}(\bm{0}_{1}.\bm{1}_{d}) &=
    A_{d}(\bm{0}_{1},\bm{\gamma}) \;\,
    \swap{\bm{0}_{1}.\bm{01}_{d}}{\bm{0}_{1}.\bm{10}_{d}} \;\,
    A_{d}(\bm{\gamma},\bm{0}_{1}.\bm{0}_{d}) \\*
    \hat{B}_{d}(\bm{1}_{1}.\bm{0}_{d}) &=
    A_{d}(\bm{1}_{1},\bm{\gamma}) \;\,
    \swap{\bm{1}_{1}.\bm{01}_{d}}{\bm{1}_{1}.\bm{10}_{d}} \;\,
    A_{d}(\bm{\gamma},\bm{1}_{1}.\bm{1}_{d}) \\*
    \hat{B}_{d}(\bm{1}_{1}.\bm{1}_{d}) &=
    A_{d}(\bm{1}_{1},\bm{\gamma}) \;\,
    \swap{\bm{1}_{1}.\bm{01}_{d}}{\bm{1}_{1}.\bm{10}_{d}} \;\,
    A_{d}(\bm{\gamma},\bm{1}_{1}.\bm{0}_{d})
  \end{aligned}
  \label{eq:hatB-def}
\end{equation}
where $\bm{\gamma} = (\gamma_{1},\gamma_{2},\dots,\gamma_{n})$ is an
address in~$\Omega$ satisfying
$\length{\gamma_{1}},\length{\gamma_{d}} \geq 1$ and the additional
condition that $\bm{\gamma} \perp \bm{0}_{1}$ in the first two
definitions and that $\bm{\gamma} \perp \bm{1}_{1}$ in the last two
definitions in~\eqref{eq:hatB-def}.  Further, we then define:
\begin{equation}
  \begin{aligned}
    B_{d}(\bm{0}_{1}) &= \hat{B}_{d}(\bm{0}_{1}.\bm{0}_{d}) \;\,
    \hat{B}_{d}(\bm{0}_{1}.\bm{1}_{d}) \;\,
    \swap{\bm{0}_{1}.\bm{01}_{d}}{\bm{0}_{1}.\bm{10}_{d}} \\*
    B_{d}(\bm{1}_{1}) &= \hat{B}_{d}(\bm{1}_{1}.\bm{0}_{d}) \;\,
    \hat{B}_{d}(\bm{1}_{1}.\bm{1}_{d}) \;\,
    \swap{\bm{1}_{1}.\bm{01}_{d}}{\bm{1}_{1}.\bm{10}_{d}}
  \end{aligned}
  \label{eq:B-def}
\end{equation}
These three types of element are the analogues of the maps arising
within the proof of~\cite[Lemma~3]{Brin-baker} and their definition
precisely follows that proof.  Consequently, the
product~$A_{d}(\bm{\alpha},\bm{\beta})$ evaluates in the group~$nV$ to
the composite of an ``index~$d$'' baker's map with
support~$\Gamma(\bm{\alpha})$ and the inverse of an ``index~$d$''
baker's map with support~$\Gamma(\bm{\beta})$.  The subsequent
elements $\hat{B}_{d}(\bm{\alpha})$~and~$B_{d}(\bm{\alpha})$ both
evaluate to the ``index~$d$'' baker's map with
support~$\Gamma(\bm{\alpha})$.  The difference is that the address
$\bm{\alpha} = (\alpha_{1},\alpha_{2},\dots,\alpha_{n})$ that we have
first defined them upon satisfies $\length{\alpha_{1}} = 1$ for both
products but the~$\hat{B}_{d}$ version requires $\length{\alpha_{d}} =
1$ while $B_{d}$~permits $\alpha_{d}$~to be empty.  One notes that to
define a baker's map on the whole space $\Gamma = \Cant^{n}$ (that is,
with address~$\bm{\epsilon}$) requires a further such definition.  As
this is (up to choice of index~$d$) a single element in~$G_{\infty}$,
we delay the definition of this element, which appears as~$C_{0,d}$
below (see Equation~\eqref{eq:HMgens1}).

To extend the baker's maps to arbitrary addresses we use another
convenient notation.  If $g$~is an element of~$G_{\infty}$ and
$\bm{\delta} \in \Omega$, we write $\bm{\delta}.g$ for the
element of~$G_{\infty}$ obtained by inserting~$\bm{\delta}$ as a
prefix in both entries of every transposition appearing in the
product~$g$.  Since the relations~\eqref{rel:order}--\eqref{rel:split}
are closed under performing such insertions, it follows that
(i)~$\bm{\delta}.g$~is a well-defined element of~$G_{\infty}$ and
(ii)~if $u = v$ is a relation that holds in~$G_{\infty}$ then
$\bm{\delta}.u = \bm{\delta}.v$ also holds in~$G_{\infty}$.  We shall
use the latter observation to reduce the number of calculations required.

In terms of this prefix notation, observe that
\[
\bm{\delta}.A_{d}(\bm{\alpha},\bm{\beta}) =
A_{d}(\bm{\delta}\bm{\alpha},\bm{\delta}\bm{\beta})
\]
for any addresses $\bm{\alpha}$,~$\bm{\beta}$ and~$\bm{\delta}$ with
$\bm{\alpha} \perp \bm{\beta}$.  For our baker's maps, we define
$\hat{B}_{d}(\bm{\alpha})$, where $\bm{\alpha} =
(\alpha_{1},\alpha_{2},\dots,\alpha_{n})$ satisfies
$\length{\alpha_{1}},\length{\alpha_{d}} \geq 1$, by writing
$\bm{\alpha} = \bm{\delta}\bm{\beta}$ where $\bm{\beta} \in \{
\bm{0}_{1}.\bm{0}_{d}, \bm{0}_{1}.\bm{1}_{d}, \bm{1}_{1}.\bm{0}_{d},
\bm{1}_{1}.\bm{1}_{d} \}$ and then setting
\[
\hat{B}_{d}(\bm{\alpha}) = \bm{\delta}.\hat{B}_{d}(\bm{\beta}),
\]
where $\hat{B}_{d}(\bm{\beta})$~is as defined in
Equation~\eqref{eq:hatB-def}.  Similarly if $\bm{\alpha} =
(\alpha_{1},\alpha_{2},\dots,\alpha_{n})$ satisfies
$\length{\alpha_{1}} \geq 1$, by writing $\bm{\alpha} =
\bm{\delta}\bm{\beta}$ where $\bm{\beta} \in \{ \bm{0}_{1}, \bm{1}_{1}
\}$, we define
\[
B_{d}(\bm{\alpha}) = \bm{\delta}.B_{d}(\bm{\beta}).
\]
Note that inserting this prefix~$\bm{\delta}$ into the
definition~\eqref{eq:B-def} yields
\begin{equation}
  B_{d}(\bm{\alpha}) = \hat{B}_{d}(\bm{\alpha}.\bm{0}_{d}) \;
  \hat{B}_{d}(\bm{\alpha}.\bm{1}_{d}) \;
  \swap{\bm{\alpha}.\bm{01}_{d}}{\bm{\alpha}.\bm{10}_{d}}
  \label{eq:B-form}
\end{equation}
for any $\bm{\alpha} = (\alpha_{1},\alpha_{2},\dots,\alpha_{n})$ with
$\length{\alpha_{1}} \geq 1$.  Now if $\bm{\alpha}$~is an address such
that both $\hat{B}_{d}(\bm{\alpha})$ and $B_{d}(\bm{\alpha})$ are
defined, then they evaluate to the same baker's map in the group~$nV$.
However, the former is not defined for addresses $\bm{\alpha} =
(\alpha_{1},\alpha_{2},\dots,\alpha_{n})$ with $\alpha_{d}$~empty.

\begin{lemma}
  \label{lem:A-map}
  Let $\bm{\alpha},\bm{\beta},\bm{\gamma},\bm{\delta} \in \Omega$ be
  addresses with $\bm{\alpha} \perp \bm{\beta}$ and $\bm{\gamma} \perp
  \bm{\delta}$.  Let $d,d'$~be indices in the range $2 \leq d,d' \leq
  n$.  Then
  \begin{enumerate}
  \item
    $A_{d}(\bm{\alpha},\bm{\beta})^{\swap{\bm{\gamma}}{\bm{\delta}}} =
    A_{d}\bigl(\bm{\alpha}\!\bullet\!\swap{\bm{\gamma}}{\bm{\delta}},
    \bm{\beta}\!\bullet\!\swap{\bm{\gamma}}{\bm{\delta}} \bigr)$,
    whenever both $\bm{\alpha} \bullet
    \swap{\bm{\gamma}}{\bm{\delta}}$ and $\bm{\beta} \bullet
    \swap{\bm{\gamma}}{\bm{\delta}}$ are defined;
  \item if every pair from $\{ \bm{\alpha}, \bm{\beta}, \bm{\gamma},
    \bm{\delta} \}$ are incomparable, then
    $A_{d}(\bm{\alpha},\bm{\beta})$ and
    $A_{d}(\bm{\gamma},\bm{\delta})$ commute;
  \item $A_{d}(\bm{\alpha},\bm{\beta}) =
    \swap{\bm{\alpha}.\bm{01}_{1}}{\bm{\alpha}.\bm{10}_{1}} \;
    A_{d}(\bm{\alpha}.\bm{0}_{1}, \bm{\beta}.\bm{0}_{1}) \;
    A_{d}(\bm{\alpha}.\bm{1}_{1}, \bm{\beta}.\bm{1}_{1}) \;
    \swap{\bm{\beta}.\bm{01}_{1}}{\bm{\beta}.\bm{10}_{1}}$;
  \item if $d' \neq d$, then $A_{d}(\bm{\alpha},\bm{\beta}) =
    A_{d}(\bm{\alpha}.\bm{0}_{d'},\bm{\beta}.\bm{0}_{d'}) \;
    A_{d}(\bm{\alpha}.\bm{1}_{d'},\bm{\beta}.\bm{1}_{d'})$.
  \end{enumerate}
\end{lemma}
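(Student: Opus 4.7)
My plan is to expand $A_{d}(\bm{\alpha},\bm{\beta})$ according to its defining product in~\eqref{eq:A-def} and to handle each part by manipulating the three factors using the relations of Theorem~\ref{thm:infpres}. For~(i), I would conjugate term-by-term, applying~\eqref{rel:conj} to each transposition inside $A_{d}(\bm{\alpha},\bm{\beta})^{\swap{\bm{\gamma}}{\bm{\delta}}}$. The key supporting observation is that appending a letter to a fixed coordinate commutes with the bullet action: whenever $\bm{\alpha} \bullet \swap{\bm{\gamma}}{\bm{\delta}}$ is defined, one has $(\bm{\alpha}.\bm{x}_{e}) \bullet \swap{\bm{\gamma}}{\bm{\delta}} = (\bm{\alpha} \bullet \swap{\bm{\gamma}}{\bm{\delta}}).\bm{x}_{e}$ for any coordinate~$e$ and any $x \in \{0,1\}$, which is immediate from~\eqref{eq:actiondef}. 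The three conjugated factors then reassemble into the defining expression for $A_{d}(\bm{\alpha} \bullet \swap{\bm{\gamma}}{\bm{\delta}}, \bm{\beta} \bullet \swap{\bm{\gamma}}{\bm{\delta}})$.

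For~(ii), I would first verify a commutation principle: transpositions $\swap{\bm{x}}{\bm{y}}$ and $\swap{\bm{u}}{\bm{v}}$ with all four addresses pairwise incomparable commute, since~\eqref{rel:conj} then gives $\swap{\bm{x}}{\bm{y}}^{\swap{\bm{u}}{\bm{v}}} = \swap{\bm{x}}{\bm{y}}$. Every address inside $A_{d}(\bm{\alpha},\bm{\beta})$ is an extension of $\bm{\alpha}$ or $\bm{\beta}$, and similarly for $A_{d}(\bm{\gamma},\bm{\delta})$; the hypothesis that every pair from $\{\bm{\alpha},\bm{\beta},\bm{\gamma},\bm{\delta}\}$ is incomparable then passes to the extensions, so every factor of the first product commutes with every factor of the second.

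For~(iv), I would apply the split relation~\eqref{rel:split} in coordinate~$d'$ to each of the three transpositions of $A_{d}(\bm{\alpha},\bm{\beta})$. Because $d,d' \geq 2$ and $d \neq d'$, the inserted $\bm{0}_{d'}$ and $\bm{1}_{d'}$ commute positionally with the symbols already appended in each factor, producing six transpositions that naturally split into two triples according to the value placed in the $d'$th coordinate. These two triples commute with one another by the principle from~(ii), and after regrouping they are recognisable as $A_{d}(\bm{\alpha}.\bm{0}_{d'},\bm{\beta}.\bm{0}_{d'}) \cdot A_{d}(\bm{\alpha}.\bm{1}_{d'},\bm{\beta}.\bm{1}_{d'})$.

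Part~(iii) I expect to be the main obstacle, because the split happens in coordinate~$1$ and thereby interacts non-trivially with the $\bm{0}_{1}, \bm{1}_{1}$ already appended inside the factors of $A_{d}(\bm{\alpha},\bm{\beta})$. The plan is to expand both sides into products of transpositions on addresses whose first coordinate has length two, by applying~\eqref{rel:split} in coordinate~$1$ to every factor on the left and to every factor of each $A_{d}(\bm{\alpha}.\bm{x}_{1},\bm{\beta}.\bm{x}_{1})$ on the right. Matching the two expansions then requires interchanging the transpositions carrying $\bm{01}_{1}$ within the $\bm{\alpha}$\nbd part with those carrying $\bm{10}_{1}$, and analogously on the $\bm{\beta}$\nbd side; the correction swaps $\swap{\bm{\alpha}.\bm{01}_{1}}{\bm{\alpha}.\bm{10}_{1}}$ and $\swap{\bm{\beta}.\bm{01}_{1}}{\bm{\beta}.\bm{10}_{1}}$ bracketing the right-hand side are exactly what is needed to perform this interchange, via the commutation principle from~(ii) together with~\eqref{rel:order} to absorb squared factors. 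The careful bookkeeping of this reordering is where the real work of the lemma lies.
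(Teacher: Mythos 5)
Your proposal is correct and follows essentially the same route as the paper: parts~(i) and~(ii) are handled identically (term-by-term conjugation and the disjoint-transposition commutation principle), and part~(iv) by splitting each factor in coordinate~$d'$ and regrouping. For part~(iii) the paper computes in a single direction, expanding the right-hand side, rearranging disjoint transpositions via~\eqref{rel:conj}, and then re-collapsing adjacent pairs with~\eqref{rel:split} to recover $A_{d}(\bm{\alpha},\bm{\beta})$ times a square; your plan to expand both sides symmetrically and match is equivalent in substance, and your identification of the two bracketing transpositions as the correction for the crossed pairings produced by splitting $\swap{\bm{\alpha}.\bm{x}_{1}}{\bm{\beta}.\bm{x}_{d}}$ in coordinate~$1$ is exactly the right insight.
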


\begin{proof}
  Part~(i) follows immediately by~\eqref{rel:conj}.  We deduce~(ii) by
  noting that Equation~\eqref{eq:A-def} expresses
  $A_{d}(\bm{\alpha},\bm{\beta})$ as a product of transpositions
  each of which, by~(i), commutes
  with~$A_{d}(\bm{\gamma},\bm{\delta})$.

  (iii)~We expand the right-hand side using the definition of the
  terms~$A_{d}$ and collect the transpositions using the conjugacy
  relations~\eqref{rel:conj}:
  \begin{align*}
    \lefteqn{%
      \swap{\bm{\alpha}.\bm{01}_{1}}{\bm{\alpha}.\bm{10}_{1}} \;
      A_{d}(\bm{\alpha}.\bm{0}_{1},\bm{\beta}.\bm{0}_{1}) \;
      A_{d}(\bm{\alpha}.\bm{1}_{1},\bm{\beta}.\bm{1}_{1}) \;
      \swap{\bm{\beta}.\bm{01}_{1}}{\bm{\beta}.\bm{10}_{1}}}
    \qquad\qquad\qquad \\*
    &=
    \swap{\bm{\alpha}.\bm{01}_{1}}{\bm{\alpha}.\bm{10}_{1}} \;
    \swap{\bm{\alpha}.\bm{00}_{1}}{\bm{\beta}.\bm{0}_{1}.\bm{0}_{d}}
    \;
    \swap{\bm{\alpha}.\bm{01}_{1}}{\bm{\beta}.\bm{0}_{1}.\bm{1}_{d}}
    \; \swap{\bm{\alpha}.\bm{0}_{1}}{\bm{\beta}.\bm{0}_{1}} \\*
    &\qquad \mbox{} \cdot
    \swap{\bm{\alpha}.\bm{10}_{1}}{\bm{\beta}.\bm{1}_{1}.\bm{0}_{d}}
    \;
    \swap{\bm{\alpha}.\bm{11}_{1}}{\bm{\beta}.\bm{1}_{1}.\bm{1}_{d}}
    \; \swap{\bm{\alpha}.\bm{1}_{1}}{\bm{\beta}.\bm{1}_{1}} \;
    \swap{\bm{\beta}.\bm{01}_{1}}{\bm{\beta}.\bm{10}_{1}} \\
    &=
    \swap{\bm{\alpha}.\bm{00}_{1}}{\bm{\beta}.\bm{0}_{1}.\bm{0}_{d}}
    \;\, \swap{\bm{\alpha}.\bm{01}_{1}}{\bm{\alpha}.\bm{10}_{1}}
    \;\,
    \swap{\bm{\alpha}.\bm{10}_{1}}{\bm{\beta}.\bm{1}_{1}.\bm{0}_{d}}
    \;\,
    \swap{\bm{\alpha}.\bm{01}_{1}}{\bm{\beta}.\bm{0}_{1}.\bm{1}_{d}}
    \\*
    &\qquad \mbox{} \cdot
    \swap{\bm{\alpha}.\bm{11}_{1}}{\bm{\beta}.\bm{1}_{1}.\bm{1}_{d}}
    \;\, \swap{\bm{\alpha}.\bm{0}_{1}}{\bm{\beta}.\bm{0}_{1}} \;\,
    \swap{\bm{\alpha}.\bm{1}_{1}}{\bm{\beta}.\bm{1}_{1}} \;\,
    \swap{\bm{\beta}.\bm{01}_{1}}{\bm{\beta}.\bm{10}_{1}} \\
    &=
    \swap{\bm{\alpha}.\bm{00}_{1}}{\bm{\beta}.\bm{0}_{1}.\bm{0}_{d}}
    \;\,
    \swap{\bm{\alpha}.\bm{01}_{1}}{\bm{\beta}.\bm{1}_{1}.\bm{0}_{d}}
    \;\,
    \swap{\bm{\alpha}.\bm{10}_{1}}{\bm{\beta}.\bm{0}_{1}.\bm{1}_{d}}
    \;\,
    \swap{\bm{\alpha}.\bm{11}_{1}}{\bm{\beta}.\bm{1}_{1}.\bm{1}_{d}}
    \\*
    &\qquad \mbox{} \cdot
    \swap{\bm{\alpha}.\bm{01}_{1}}{\bm{\alpha}.\bm{10}_{1}} \;\,
    \swap{\bm{\alpha}.\bm{0}_{1}}{\bm{\beta}.\bm{0}_{1}} \;\,
    \swap{\bm{\alpha}.\bm{1}_{1}}{\bm{\beta}.\bm{1}_{1}} \;\,
    \swap{\bm{\beta}.\bm{01}_{1}}{\bm{\beta}.\bm{10}_{1}} \\
    &= \swap{\bm{\alpha}.\bm{0}_{1}}{\bm{\beta}.\bm{0}_{d}} \;\,
    \swap{\bm{\alpha}.\bm{1}_{1}}{\bm{\beta}.\bm{1}_{d}} \;\,
    \swap{\bm{\alpha}.\bm{01}_{1}}{\bm{\alpha}.\bm{10}_{1}} \;\,
    \swap{\bm{\alpha}}{\bm{\beta}} \;\,
    \swap{\bm{\beta}.\bm{01}_{1}}{\bm{\beta}.\bm{10}_{1}}
    \qquad \text{(by~\eqref{rel:split})} \\
    &= \swap{\bm{\alpha}.\bm{0}_{1}}{\bm{\beta}.\bm{0}_{d}} \;\,
    \swap{\bm{\alpha}.\bm{1}_{1}}{\bm{\beta}.\bm{1}_{d}} \;\,
    \swap{\bm{\alpha}}{\bm{\beta}} \;\,
    \swap{\bm{\beta}.\bm{01}_{1}}{\bm{\beta}.\bm{10}_{1}}^{2}
    =
    A_{d}(\bm{\alpha},\bm{\beta}).
  \end{align*}

  (iv)~Apply the split relation~\eqref{rel:split} in the $d'$th
  coordinate to each transposition appearing
  in~$A_{d}(\bm{\alpha},\bm{\beta})$.  For example,
  $\swap{\bm{\alpha}.\bm{0}_{1}}{\bm{\beta}.\bm{0}_{d}} =
  \swap{\bm{\alpha}.\bm{0}_{d'}.\bm{0}_{1}}{\bm{\beta}.\bm{0}_{d'}.\bm{0}_{d}}
  \,
  \swap{\bm{\alpha}.\bm{1}_{d'}.\bm{0}_{1}}{\bm{\beta}.\bm{1}_{d'}.\bm{0}_{d}}$
  and similarly for the other transpositions.  Every pair of addresses
  from $\bm{\alpha}.\bm{0}_{d'}$,~$\bm{\alpha}.\bm{1}_{d'}$,
  $\bm{\beta}.\bm{0}_{d'}$ and~$\bm{\beta}.\bm{1}_{d'}$ are
  incomparable so rearranging the resulting formula
  for~$A_{d}(\bm{\alpha},\bm{\beta})$ yields the claimed result.
\end{proof}

\begin{lemma}
  \label{lem:prebaker}
  Let $d,d'$~be indices in the range $2 \leq d,d' \leq n$.
  \begin{enumerate}
  \item If $\bm{\alpha} = (\alpha_{1},\alpha_{2},\dots,\alpha_{n}) \in
    \Omega$ satisfies $\length{\alpha_{1}}, \length{\alpha_{d}} \geq
    1$ and if $\bm{\zeta},\bm{\eta} \in \Omega$ are such that every
    pair from $\{ \bm{\alpha}, \bm{\zeta}, \bm{\eta} \}$ are
    incomparable, then $\hat{B}_{d}(\bm{\alpha})$~commutes
    with~$\swap{\bm{\zeta}}{\bm{\eta}}$.
  \item If $\bm{\alpha} = \bm{\delta}.\bm{x}_{1}.\bm{y}_{d}$ for some
    $\bm{\delta} \in \Omega$ and some $x,y \in \{0,1\}$, then
    \begin{equation}
      \hat{B}_{d}(\bm{\alpha}) =
      A_{d}(\bm{\delta}.\bm{x}_{1},\bm{\gamma}) \;
      \swap{\bm{\delta}.\bm{x}_{1}.\bm{01}_{d}}{\bm{\delta}.\bm{x}_{1}.\bm{10}_{d}}
      \; A_{d}(\bm{\gamma},\bm{\delta}.\bm{x}_{1}.\bar{\bm{y}}_{d})
      \label{eq:hatB-form}
    \end{equation}
    for any address $\bm{\gamma} =
    (\gamma_{1},\gamma_{2},\dots,\gamma_{n})$ such that $\bm{\gamma}
    \perp \bm{\delta}.\bm{x}_{1}$ and\/ $\length{\gamma_{1}},
    \length{\gamma_{d}} \geq 1$.
  \item If $y \in \{0,1\}$, then
    $\hat{B}_{d}(\bm{0}_{1}.\bm{y}_{d})^{\swap{\bm{0}_{1}}{\bm{1}_{1}}}
    = \hat{B}_{d}(\bm{1}_{1}.\bm{y}_{d})$.
  \item If $y \in \{0,1\}$, then
    $\hat{B}_{d}(\bm{0}_{1}.\bm{y}_{d})^{\swap{\bm{0}_{1}}{\bm{10}_{1}}}
    = \hat{B}_{d}(\bm{10}_{1}.\bm{y}_{d})$ and
    $\hat{B}_{d}(\bm{0}_{1}.\bm{y}_{d})^{\swap{\bm{0}_{1}}{\bm{11}_{1}}}
    = \hat{B}_{d}(\bm{11}_{1}.\bm{y}_{d})$.
  \item If $x \in \{0,1\}$, then
    $\hat{B}_{d}(\bm{x0}_{1}.\bm{1}_{d})$ and
    $\hat{B}_{d}(\bm{x1}_{1}.\bm{0}_{d})$ commute.
  \item If $x,y \in \{0,1\}$, then $\hat{B}_{d}(\bm{x}_{1}.\bm{y}_{d})
    = \swap{\bm{x01}_{1}.\bm{y}_{d}}{\bm{x10}_{1}.\bm{y}_{d}} \;
    \hat{B}_{d}(\bm{x0}_{1}.\bm{y}_{d}) \;
    \hat{B}_{d}(\bm{x1}_{1}.\bm{y}_{d})$.
  \item If $d' \neq d$, then $\hat{B}_{d}(\bm{x}_{1}.\bm{y}_{d}) =
    \hat{B}_{d}(\bm{x}_{1}.\bm{y}_{d}.\bm{0}_{d'}) \;
    \hat{B}_{d}(\bm{x}_{1}.\bm{y}_{d}.\bm{1}_{d'})$.
  \end{enumerate}
\end{lemma}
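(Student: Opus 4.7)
My plan is to prove the seven parts in the order (ii), (i), (iii)--(iv), (v), (vi)--(vii), since all parts but (ii) follow comparatively directly once $\hat{B}_{d}(\bm{\alpha})$ has been shown to be independent of the auxiliary address~$\bm{\gamma}$ chosen in its defining product. Accordingly I would treat (ii) as the principal target.

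For part~(ii), set $P(\bm{\gamma}) = A_{d}(\bm{\alpha}_{0},\bm{\gamma}) \cdot T \cdot A_{d}(\bm{\gamma},\bm{\beta}_{0})$ where $\bm{\alpha}_{0} = \bm{\delta}.\bm{x}_{1}$, \ $\bm{\beta}_{0} = \bm{\alpha}_{0}.\bar{\bm{y}}_{d}$ and $T = \swap{\bm{\alpha}_{0}.\bm{01}_{d}}{\bm{\alpha}_{0}.\bm{10}_{d}}$. For valid choices with $\bm{\gamma}_{1} \perp \bm{\gamma}_{2}$, Lemma~\ref{lem:A-map}(i) delivers $P(\bm{\gamma}_{1})^{\swap{\bm{\gamma}_{1}}{\bm{\gamma}_{2}}} = P(\bm{\gamma}_{2})$, since each $A_{d}$-factor transforms by address change while the central transposition~$T$ is unchanged (its entries have prefix~$\bm{\alpha}_{0}$ and are therefore incomparable with both $\bm{\gamma}_{1}$ and~$\bm{\gamma}_{2}$). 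The desired equality $P(\bm{\gamma}_{1}) = P(\bm{\gamma}_{2})$ is thus equivalent to $P(\bm{\gamma}_{1})$ commuting with $\swap{\bm{\gamma}_{1}}{\bm{\gamma}_{2}}$, which is the main obstacle. I would approach this by expanding the $A_{d}$-factors via Lemma~\ref{lem:A-map}(iii) and~(iv) to split the $\bm{\gamma}_{1}$-dependence across more refined addresses, then using the split relations~\eqref{rel:split} together with conjugacy relations~\eqref{rel:conj} to rearrange the product into a form where the $\bm{\gamma}_{1}$-related transpositions assemble into pieces that visibly commute with $\swap{\bm{\gamma}_{1}}{\bm{\gamma}_{2}}$. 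The motivation for this strategy is that the corresponding element of~$nV$ simplifies (after baker/inverse-baker cancellation on $\Gamma(\bm{\gamma})$) to a $\bm{\gamma}$-free expression supported on~$\Gamma(\bm{\alpha}_{0})$; the proof must realise this cancellation inside $G_{\infty}$ using only the listed relations.

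Once~(ii) is established, part~(i) is then immediate: choose the auxiliary $\bm{\gamma}$ in the expansion of $\hat{B}_{d}(\bm{\alpha})$ to be additionally incomparable with $\bm{\zeta}$ and~$\bm{\eta}$ (possible because we need only a single coordinate of~$\bm{\gamma}$ to differ from those of $\bm{\alpha},\bm{\zeta},\bm{\eta}$). Every transposition in the expansion then has entries with prefix in $\{\bm{\alpha}_{0},\bm{\gamma}\}$, hence incomparable with $\bm{\zeta},\bm{\eta}$, and commutation with $\swap{\bm{\zeta}}{\bm{\eta}}$ follows from the conjugacy relations~\eqref{rel:conj}. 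Parts~(iii) and~(iv) then follow by applying Lemma~\ref{lem:A-map}(i) to each $A_{d}$-factor of $P(\bm{\gamma})$: the conjugating transposition maps $\bm{0}_{1}$ to its target and transforms $\bm{\gamma}$ to some $\bm{\gamma}'$ still valid for the target~$\hat{B}_{d}$ (for~(iv) one first chooses $\bm{\gamma}$ incomparable with both $\bm{0}_{1}$ and~$\bm{10}_{1}$, so that~$\bm{\gamma}' = \bm{\gamma}$), and~(ii) identifies the result with the claimed right-hand side.

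Finally, for~(v) I would pick the auxiliary addresses in the two defining products to be $\bm{\gamma}_{1} = \bar{\bm{x}}_{1}.\bm{0}_{d}$ and $\bm{\gamma}_{2} = \bar{\bm{x}}_{1}.\bm{1}_{d}$, so that all transpositions in the two expansions are pairwise disjoint and hence pairwise commute by~\eqref{rel:conj}; appealing to~(ii) gives the stated commutation. For~(vi) and~(vii), apply Lemma~\ref{lem:A-map}(iii) respectively~(iv) to split each $A_{d}$-factor, and~\eqref{rel:split} to split the central~$T$ correspondingly. Because the two resulting halves have disjoint support (they differ in the first or $d'$th coordinate), they commute and may be regrouped so that the product factors as two independent pieces, each of which matches $P'(\bm{\gamma}')$ for the corresponding refined address; a final application of~(ii) identifies each piece with a refined~$\hat{B}_{d}$, completing the split.
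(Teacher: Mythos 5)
The proposal inverts the order of the paper's argument---it treats (ii) as primary and then deduces (i) from it---but this inversion creates a genuine gap, and the deduction of (i) from (ii) has a concrete error.

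First, the planned route to (ii). You correctly reduce the problem to showing that $P(\bm{\gamma}_{1})$ commutes with $\swap{\bm{\gamma}_{1}}{\bm{\gamma}_{2}}$, and you flag this as ``the main obstacle,'' but what you then offer (``expand\ldots split\ldots rearrange into pieces that visibly commute'') is not an argument. This obstacle is, in substance, an instance of part~(i): $\bm{\gamma}_{1}$ and $\bm{\gamma}_{2}$ are both incomparable with the address on whose basic open set $P(\bm{\gamma}_{1})$ is (morally) supported, and what you need is exactly the statement that such a $\hat{B}_{d}$-like product commutes with a transposition whose entries miss its support. The paper proves~(i) first, and it does so not by a syntactic rearrangement but by a different mechanism that you do not invoke: it applies the split relation~\eqref{rel:split} to $\swap{\bm{\zeta}}{\bm{\eta}}$ until the entries $\bm{\zeta}'$, $\bm{\eta}'$ are long enough that the partial actions $\bm{\zeta}'\bullet\hat{B}_{d}(\bm{\alpha})$ and $\bm{\eta}'\bullet\hat{B}_{d}(\bm{\alpha})$ are defined, and then uses the observation (made earlier in Section~2) that when $\bm{\alpha}\bullet g$ is defined it agrees with the evaluation of $g$ in $nV$; since the evaluated baker's map is supported on $\Gamma(\bm{\alpha})$, disjoint from $\Gamma(\bm{\zeta}')$, $\Gamma(\bm{\eta}')$, the partial action is the identity and commutation drops out of~\eqref{rel:conj}. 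Without that bridge, establishing the needed commutation purely by manipulating products of transpositions would amount to reconstructing the proof of~(i) in a harder setting, which your sketch does not do.

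Second, your proposed deduction of~(i) from~(ii) fails in general. You claim that choosing the auxiliary address $\bm{\gamma}$ incomparable with $\bm{\zeta}$ and $\bm{\eta}$ forces every transposition in the expansion of $\hat{B}_{d}(\bm{\alpha})$ to be disjoint from $\swap{\bm{\zeta}}{\bm{\eta}}$, because the entries have prefix $\bm{\alpha}_{0}$ or $\bm{\gamma}$. But $\bm{\alpha}_{0} = \bm{\delta}.\bm{x}_{1}$ is the truncation of $\bm{\alpha} = \bm{\alpha}_{0}.\bm{y}_{d}$, and $\bm{\alpha}\perp\bm{\zeta}$ does not imply $\bm{\alpha}_{0}\perp\bm{\zeta}$. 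Concretely, take $\bm{\alpha}_{0} = \bm{0}_{1}$, $\bm{\alpha} = \bm{0}_{1}.\bm{0}_{d}$ and $\bm{\zeta} = \bm{0}_{1}.\bm{1}_{d}$: then $\bm{\alpha}\perp\bm{\zeta}$, but $\bm{\zeta}$ equals $\bm{\beta}_{0} = \bm{\alpha}_{0}.\bar{\bm{y}}_{d}$, an entry appearing directly in the expansion $A_{d}(\bm{\alpha}_{0},\bm{\gamma})\,T\,A_{d}(\bm{\gamma},\bm{\beta}_{0})$. The expansion's transpositions act nontrivially on $\Gamma(\bm{\zeta})$ even though the product as a whole does not; the cancellation is a global feature and cannot be seen transposition-by-transposition. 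This is exactly why the paper splits $\swap{\bm{\zeta}}{\bm{\eta}}$ rather than the $\hat{B}_{d}$ side.

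The remaining parts (iii)--(vii) of your outline are in line with the paper once (i) and (ii) are secure: (iii), (iv) via Lemma~\ref{lem:A-map}(i) applied to the factors, (v) via a choice of disjoint auxiliary addresses, (vi), (vii) via Lemma~\ref{lem:A-map}(iii)/(iv) together with the split relations. But the proposal as written does not close the central gap, and the specific commutation claim in your version of~(i) is false as stated.
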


If we take $\bm{\delta} = \bm{\epsilon}$ in Part~(ii) then it tells us
that the definitions in Equation~\eqref{eq:hatB-def} are independent
of the choice of address~$\bm{\gamma}$ used.  Consequently,
$\hat{B}_{d}(\bm{\alpha})$~does depend only on the
address~$\bm{\alpha}$.

\begin{proof}
  (i)~Repeatedly apply the split relation~\eqref{rel:split} to
  express~$\swap{\bm{\zeta}}{\bm{\eta}}$ as a product of
  transpositions~$\swap{\bm{\zeta}'}{\bm{\eta}'}$ having entries with
  sufficiently long coordinates that the values
  $\bm{\zeta}'\bullet\hat{B}_{d}(\bm{\alpha})$ and
  $\bm{\eta}'\bullet\hat{B}_{d}(\bm{\alpha})$ are defined.  These
  values therefore coincide with those obtained when the corresponding
  baker's map in~$nV$ is applied to $\bm{\zeta}'$~and~$\bm{\eta}'$.
  Since $\bm{\zeta}$~and~$\bm{\eta}$ are both incomparable
  with~$\bm{\alpha}$, we conclude
  $\bm{\zeta}'\bullet\hat{B}_{d}(\bm{\alpha}) = \bm{\zeta}'$ and
  $\bm{\eta}'\bullet\hat{B}_{d}(\bm{\alpha}) = \bm{\eta}'$.  It then
  follows, by the conjugacy relation~\eqref{rel:conj}, that
  $\hat{B}_{d}(\bm{\alpha})$~commutes with all
  such~$\swap{\bm{\zeta}'}{\bm{\eta}'}$ and hence also with their
  product~$\swap{\bm{\zeta}}{\bm{\eta}}$.

  (ii)~One formula of the form given in~\eqref{eq:hatB-form}
  for~$\hat{B}_{d}(\bm{\delta}.\bm{x}_{1}.\bm{y}_{d})$ is obtained by
  inserting~$\bm{\delta}$ as prefix into the entries of the
  transpositions in the definition
  of~$\hat{B}_{d}(\bm{x}_{1}.\bm{y}_{d})$ in
  Equation~\eqref{eq:hatB-def}.  Start with one such valid
  formula~\eqref{eq:hatB-form} involving a particular
  address~$\bm{\gamma}$ and consider another potential
  address~$\bm{\gamma}' = (\gamma_{1}',\gamma_{2}',\dots,\gamma_{n}')$
  satisfying $\bm{\gamma}' \perp \bm{\delta}.\bm{x}_{1}$ and
  $\length{\gamma_{d}'} \geq 1$.  If $\bm{\gamma}' \perp \bm{\gamma}$,
  conjugate by~$\swap{\bm{\gamma}}{\bm{\gamma}'}$ and use part~(i) to
  produce the required formula
  for~$\hat{B}_{d}(\bm{\delta}.\bm{x}_{1}.\bm{y}_{d})$.  If
  $\bm{\gamma}$~and~$\bm{\gamma}'$ are not incomparable, then as
  $\length{\gamma_{d}}, \length{\gamma'_{d}} \geq 1$ there is another
  address~$\bm{\zeta}$, with non-empty $d$th coordinate, that is
  incomparable with all three of $\bm{\gamma}$,~$\bm{\gamma}'$
  and~$\bm{\delta}.\bm{x}_{1}$.  Now conjugate by the product
  $\swap{\bm{\gamma}}{\bm{\zeta}} \; \swap{\bm{\gamma}'}{\bm{\zeta}}$,
  again using part~(i), to produce the required
  formula~\eqref{eq:hatB-form} involving the address~$\bm{\gamma}'$.

  Part~(iii) follows immediately from the definition of
  the~$\hat{B}_{d}$ elements.

  (iv)~We establish here the first equation in the case when $y = 0$.
  The other formulae are similar.  First use part~(ii) to assume that
  in the expression~\eqref{eq:hatB-def}
  for~$\hat{B}_{d}(\bm{0}_{1}.\bm{0}_{d})$ the address $\bm{\gamma} =
  (\gamma_{1},\gamma_{2},\dots,\gamma_{n})$ satisfies $11 \preceq
  \gamma_{1}$.  Consequently $\bm{\gamma} \bullet
  \swap{\bm{0}_{1}}{\bm{10}_{1}} = \bm{\gamma}$ and $\bm{\gamma} =
  \bm{1}_{1}.\bm{\delta}$ where $\bm{\delta} \perp \bm{0}_{1}$.
  Hence, using Lemma~\ref{lem:A-map}(i),
  \begin{multline*}
    \hat{B}_{d}(\bm{0}_{1}.\bm{0}_{d})^{\swap{\bm{0}_{1}}{\bm{10}_{1}}}
    = A_{d}(\bm{10}_{1},\bm{\gamma}) \;
    \swap{\bm{10}_{1}.\bm{01}_{d}}{\bm{10}_{1}.\bm{10}_{d}} \;
    A_{d}(\bm{\gamma},\bm{10}_{1}.\bm{1}_{d}) \\
    = \bm{1}_{1}.\bigl( A_{d}(\bm{0}_{1},\bm{\delta}) \;
    \swap{\bm{0}_{1}.\bm{01}_{d}}{\bm{0}_{1}.\bm{10}_{d}} \;
    A_{d}(\bm{\delta},\bm{0}_{1}.\bm{1}_{d}) \bigr)
    = \bm{1}_{1}.\hat{B}_{d}(\bm{0}_{1}.\bm{0}_{d})
    = \hat{B}_{d}(\bm{10}_{1}.\bm{0}_{d}).
  \end{multline*}

  (v)~Using part~(ii), we can assume that
  \begin{align*}
    \hat{B}_{d}(\bm{x0}_{1}.\bm{1}_{d}) &=
    A_{d}(\bm{x0}_{1},\bm{\gamma}) \;
    \swap{\bm{x0}_{1}.\bm{01}_{d}}{\bm{x0}_{1}.\bm{10}_{d}} \;
    A_{d}(\bm{\gamma},\bm{x0}_{1}.\bm{0}_{d}) \\
    \hat{B}_{d}(\bm{x1}_{1}.\bm{0}_{d}) &=
    A_{d}(\bm{x1}_{1},\bm{\gamma}') \;
    \swap{\bm{x1}_{1}.\bm{01}_{d}}{\bm{x1}_{1}.\bm{10}_{d}} \;
    A_{d}(\bm{\gamma}',\bm{x1}_{1}.\bm{1}_{d})
  \end{align*}
  where $\bm{\gamma}$~and~$\bm{\gamma}'$ are addresses with
  $\bm{\gamma} \perp \bm{\gamma}'$ and $\bm{x}_{1} \perp
  \bm{\gamma},\bm{\gamma}'$.  These commute by
  Lemma~\ref{lem:A-map}(i)--(ii).

  (vi)~We present the case $x = y = 0$, with the other cases
  established by similar calculations.  Recall that $\bm{\gamma}
  \perp \bm{0}_{1}$ in our definition~\eqref{eq:hatB-def}
  of~$\hat{B}_{d}(\bm{0}_{1}.\bm{0}_{d})$.  In the following
  calculation, we begin by applying Lemma~\ref{lem:A-map}(iii) to the
  terms appearing in the definition
  of~$\hat{B}_{d}(\bm{0}_{1}.\bm{0}_{d})$:
  \begin{align*}
    \hat{B}_{d}(\bm{0}_{1}.\bm{0}_{d}) &=
    \swap{\bm{001}_{1}}{\bm{010}_{1}} \;
    A_{d}(\bm{00}_{1},\bm{\gamma}.\bm{0}_{1}) \;
    A_{d}(\bm{01}_{1},\bm{\gamma}.\bm{1}_{1}) \;
    \swap{\bm{\gamma}.\bm{01}_{1}}{\bm{\gamma}.\bm{10}_{1}} \;
    \swap{\bm{0}_{1}.\bm{01}_{d}}{\bm{0}_{1}.\bm{10}_{d}} \\*
    &\qquad \mbox{} \cdot
    \swap{\bm{\gamma}.\bm{01}_{1}}{\bm{\gamma}.\bm{10}_{1}} \;
    A_{d}(\bm{\gamma}.\bm{0}_{1},\bm{00}_{1}.\bm{1}_{d}) \;
    A_{d}(\bm{\gamma}.\bm{1}_{1},\bm{01}_{1}.\bm{1}_{d}) \;
    \swap{\bm{001}_{1}.\bm{1}_{d}}{\bm{010}_{1}.\bm{1}_{d}} \\
    &= \swap{\bm{001}_{1}}{\bm{010}_{1}} \;
    A_{d}(\bm{00}_{1},\bm{\gamma}.\bm{0}_{1}) \;
    A_{d}(\bm{01}_{1},\bm{\gamma}.\bm{1}_{1}) \;
    \swap{\bm{00}_{1}.\bm{01}_{d}}{\bm{00}_{1}.\bm{10}_{d}} \\*
    &\qquad \mbox{} \cdot
    \swap{\bm{01}_{1}.\bm{01}_{d}}{\bm{01}_{1}.\bm{10}_{d}} \;
    A_{d}(\bm{\gamma}.\bm{0}_{1},\bm{00}_{1}.\bm{1}_{d}) \;
    A_{d}(\bm{\gamma}.\bm{1}_{1},\bm{01}_{1}.\bm{1}_{d}) \;
    \swap{\bm{001}_{1}.\bm{1}_{d}}{\bm{010}_{1}.\bm{1}_{d}} \\
    &= \swap{\bm{001}_{1}}{\bm{010}_{1}} \;
    A_{d}(\bm{00}_{1},\bm{\gamma}.\bm{0}_{1}) \;
    \swap{\bm{00}_{1}.\bm{01}_{d}}{\bm{00}_{1}.\bm{10}_{d}} \;
    A_{d}(\bm{\gamma}.\bm{0}_{1},\bm{00}_{1}.\bm{1}_{d}) \\*
    &\qquad \mbox{} \cdot A_{d}(\bm{01}_{1},\bm{\gamma}.\bm{1}_{1})
    \; \swap{\bm{01}_{1}.\bm{01}_{d}}{\bm{01}_{1}.\bm{10}_{d}} \;
    A_{d}(\bm{\gamma}.\bm{1}_{1},\bm{01}_{1}.\bm{1}_{d}) \;
    \swap{\bm{001}_{1}.\bm{1}_{d}}{\bm{010}_{1}.\bm{1}_{d}} \\
    &= \swap{\bm{001}_{1}}{\bm{010}_{1}} \;
    \hat{B}_{d}(\bm{00}_{1}.\bm{0}_{d}) \;
    \hat{B}_{d}(\bm{01}_{1}.\bm{0}_{d}) \;
    \swap{\bm{001}_{1}.\bm{1}_{d}}{\bm{010}_{1}.\bm{1}_{d}}
    \qquad \text{(using part~(ii))} \\
    &= \swap{\bm{001}_{1}.\bm{0}_{d}}{\bm{010}_{1}.\bm{0}_{d}} \;
    \hat{B}_{d}(\bm{00}_{1}.\bm{0}_{d}) \;
    \hat{B}_{d}(\bm{01}_{1}.\bm{0}_{d}).
  \end{align*}

  (vii)~This follows by applying Lemma~\ref{lem:A-map}(iv) and the
  split relation~\eqref{rel:split} to terms appearing in the
  formula~\eqref{eq:hatB-def}, and then rearranging in a similar way
  to the proof of Lemma~\ref{lem:A-map}(iv).
\end{proof}

\begin{lemma}
  \label{lem:baker}
  Let $d,d'$~be indices in the range $2 \leq d,d' \leq n$.
  \begin{enumerate}
  \item If $\bm{\alpha} = (\alpha_{1},\alpha_{2},\dots,\alpha_{n}) \in
    \Omega$ satisfies $\length{\alpha_{1}} \geq 1$ and if $\bm{\zeta},
    \bm{\eta} \in \Omega$ are such that every pair from
    $\{ \bm{\alpha}, \bm{\zeta}, \bm{\eta} \}$ are incomparable, then
    $B_{d}(\bm{\alpha})$~commutes
    with~$\swap{\bm{\zeta}}{\bm{\eta}}$.  In particular, for any $x
    \in \{0,1\}$, the element~$B_{d}(\bm{x}_{1})$ commutes with any
    element of the form~$\bar{\bm{x}}_{1}.g$.
  \item $B_{d}(\bm{0}_{1})^{\swap{\bm{0}_{1}}{\bm{1}_{1}}} =
    B_{d}(\bm{1}_{1})$.
  \item $B_{d}(\bm{0}_{1})^{\swap{\bm{0}_{1}}{\bm{10}_{1}}} =
    B_{d}(\bm{10}_{1})$ and
    $B_{d}(\bm{0}_{1})^{\swap{\bm{0}_{1}}{\bm{11}_{1}}} = B_{d}(\bm{11}_{1})$.
  \item If $x \in \{0,1\}$, then $B_{d}(\bm{x}_{1}) =
    \swap{\bm{x01}_{1}}{\bm{x10}_{1}} \; B_{d}(\bm{x0}_{1}) \;
    B_{d}(\bm{x1}_{1})$.
  \item $B_{d}(\bm{0}_{1})$~and~$B_{d'}(\bm{1}_{1})$ commute.
  \item If $d' \neq d$, then $B_{d}(\bm{x}_{1}) =
    B_{d}(\bm{x}_{1}.\bm{0}_{d'}) \;
    B_{d}(\bm{x}_{1}.\bm{1}_{d'})$.
  \end{enumerate}
\end{lemma}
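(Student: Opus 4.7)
My plan is to derive each of the six parts from the factorisation
\[
B_{d}(\bm{\alpha}) = \hat{B}_{d}(\bm{\alpha}.\bm{0}_{d}) \, \hat{B}_{d}(\bm{\alpha}.\bm{1}_{d}) \, \swap{\bm{\alpha}.\bm{01}_{d}}{\bm{\alpha}.\bm{10}_{d}}
\]
provided by Equation~\eqref{eq:B-form}, reducing each assertion about $B_{d}$ to the corresponding statement about $\hat{B}_{d}$ already collected in Lemma~\ref{lem:prebaker}. For part~(i), I would verify that each of the three factors commutes with $\swap{\bm{\zeta}}{\bm{\eta}}$ individually: the two $\hat{B}_{d}$ factors by Lemma~\ref{lem:prebaker}(i) (noting that $\bm{\alpha}.\bm{y}_{d}$ remains incomparable with both $\bm{\zeta}$ and $\bm{\eta}$ whenever $\bm{\alpha}$ is), and the final transposition factor by the conjugacy relation~\eqref{rel:conj}. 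The \emph{in particular} clause then follows by writing $\bar{\bm{x}}_{1}.g$ as a product of transpositions of the form $\swap{\bar{\bm{x}}_{1}.\bm{\mu}}{\bar{\bm{x}}_{1}.\bm{\nu}}$, each of which has both entries incomparable with $\bm{x}_{1}$.

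Parts~(ii) and~(iii) would be handled by conjugating the three factors of~\eqref{eq:B-form} one at a time: Lemma~\ref{lem:prebaker}(iii) or~(iv) handles each $\hat{B}_{d}$ factor, the conjugacy relation~\eqref{rel:conj} handles the final transposition, and the three pieces recombine via~\eqref{eq:B-form} into $B_{d}$ evaluated at the shifted address.  For part~(iv), I would expand each $\hat{B}_{d}$ factor in~\eqref{eq:B-form} via Lemma~\ref{lem:prebaker}(vi), split the central transposition in coordinate~$1$ using~\eqref{rel:split}, and then rearrange the resulting factors using part~(i) of this lemma together with Lemma~\ref{lem:prebaker}(v) to commute disjointly-supported pieces past one another; a final application of~\eqref{rel:split} in reverse (in coordinate~$d$) combines two transpositions into $\swap{\bm{x01}_{1}}{\bm{x10}_{1}}$ and yields the claimed form. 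Part~(vi) follows the same pattern, using Lemma~\ref{lem:prebaker}(vii) on each $\hat{B}_{d}$ factor and~\eqref{rel:split} in coordinate~$d'$ on the transposition.

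The technically delicate step is part~(v), but it becomes straightforward once parts~(i) and~(iv) are in hand.  I would apply part~(iv) to $B_{d'}(\bm{1}_{1})$ to write
\[
B_{d'}(\bm{1}_{1}) = \swap{\bm{101}_{1}}{\bm{110}_{1}} \, B_{d'}(\bm{10}_{1}) \, B_{d'}(\bm{11}_{1}),
\]
and then observe that each of these three factors is of the form $\bm{1}_{1}.h$ for some $h \in G_{\infty}$: indeed $\swap{\bm{101}_{1}}{\bm{110}_{1}} = \bm{1}_{1}.\swap{\bm{01}_{1}}{\bm{10}_{1}}$, while the defining prefix rule for~$B_{d'}$ gives $B_{d'}(\bm{1x}_{1}) = \bm{1}_{1}.B_{d'}(\bm{x}_{1})$ for each $x \in \{0,1\}$.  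The \emph{in particular} clause of part~(i), applied with $x = 0$, then ensures that $B_{d}(\bm{0}_{1})$ commutes with each such factor and hence with their product.
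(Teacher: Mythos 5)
Your proposal is correct and follows essentially the same route as the paper: reduce each assertion about $B_{d}$ to the corresponding facts about $\hat{B}_{d}$ from Lemma~\ref{lem:prebaker} via the factorisation~\eqref{eq:B-form} (the paper uses the definition~\eqref{eq:B-def} directly for part~(iv), but that is the same thing), establish part~(iv) by the expand--rearrange--recombine calculation, and then obtain part~(v) by combining part~(iv) with the ``in particular'' clause of part~(i). The only cosmetic difference is in part~(v), where you rewrite $B_{d'}(\bm{1}_{1})$ as $\bm{1}_{1}.h$ and invoke part~(i) with $x=0$, whereas the paper rewrites $B_{d}(\bm{0}_{1})$ as $\bm{0}_{1}.h$ and invokes part~(i) with $x=1$; these are symmetric and both valid.
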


\begin{proof}
  The first part of~(i) is established by the same argument as used in
  Lemma~\ref{lem:prebaker}(i) and the remainder follows immediately.
  Parts (ii)~and~(iii) follow using parts
  (iii)~and~(iv), respectively, of Lemma~\ref{lem:prebaker}, while
  part~(vi) is an extension of Lemma~\ref{lem:prebaker}(vii) that is
  established similarly.

  We establish part~(iv) in the case that $x = 0$.  First apply
  Lemma~\ref{lem:prebaker}(vi) to the two $\hat{B}_{d}$~terms in the
  definition~\eqref{eq:B-def} and then use parts (i)~and~(v) of
  Lemma~\ref{lem:prebaker} to rearrange terms:
  \begin{align*}
    B_{d}(\bm{0}_{1}) &=
    \swap{\bm{001}_{1}.\bm{0}_{d}}{\bm{010}_{1}.\bm{0}_{d}} \;
    \hat{B}_{d}(\bm{00}_{1}.\bm{0}_{d}) \;
    \hat{B}_{d}(\bm{01}_{1}.\bm{0}_{d}) \;
    \swap{\bm{001}_{1}.\bm{1}_{d}}{\bm{010}_{1}.\bm{1}_{d}} \\*
    &\qquad \mbox{} \cdot \hat{B}_{d}(\bm{00}_{1}.\bm{1}_{d}) \;
    \hat{B}_{d}(\bm{01}_{1}.\bm{1}_{d}) \;
    \swap{\bm{0}_{1}.\bm{01}_{d}}{\bm{0}_{1}.\bm{10}_{d}} \\
    &= \swap{\bm{001}_{1}.\bm{0}_{d}}{\bm{010}_{1}.\bm{0}_{d}} \;
    \swap{\bm{001}_{1}.\bm{1}_{d}}{\bm{010}_{1}.\bm{1}_{d}} \;
    \hat{B}_{d}(\bm{00}_{1}.\bm{0}_{d}) \;
    \hat{B}_{d}(\bm{00}_{1}.\bm{1}_{d}) \\*
    &\qquad \mbox{} \cdot \hat{B}_{d}(\bm{01}_{1}.\bm{0}_{d}) \;
    \hat{B}_{d}(\bm{01}_{1}.\bm{1}_{d}) \;
    \swap{\bm{0}_{1}.\bm{01}_{d}}{\bm{0}_{1}.\bm{10}_{d}} \\
    &= \swap{\bm{001}_{1}}{\bm{010}_{1}} \;
    \hat{B}_{d}(\bm{00}_{1}.\bm{0}_{d}) \;
    \hat{B}_{d}(\bm{00}_{1}.\bm{1}_{d}) \;
    \hat{B}_{d}(\bm{01}_{1}.\bm{0}_{d}) \;
    \hat{B}_{d}(\bm{01}_{1}.\bm{1}_{d}) \\*
    &\qquad \mbox{} \cdot
    \swap{\bm{00}_{1}.\bm{01}_{d}}{\bm{00}_{1}.\bm{10}_{d}} \;
    \swap{\bm{01}_{1}.\bm{01}_{d}}{\bm{01}_{1}.\bm{10}_{d}} \\
    &= \swap{\bm{001}_{1}}{\bm{010}_{1}} \;
    \hat{B}_{d}(\bm{00}_{1}.\bm{0}_{d}) \;
    \hat{B}_{d}(\bm{00}_{1}.\bm{1}_{d}) \;
    \swap{\bm{00}_{1}.\bm{01}_{d}}{\bm{00}_{1}.\bm{10}_{d}} \\*
    &\qquad \mbox{} \cdot \hat{B}_{d}(\bm{01}_{1}.\bm{0}_{d}) \;
    \hat{B}_{d}(\bm{01}_{1}.\bm{1}_{d}) \;
    \swap{\bm{01}_{1}.\bm{01}_{d}}{\bm{01}_{1}.\bm{10}_{d}} \\
    &= \swap{\bm{001}_{1}}{\bm{010}_{1}} \; B_{d}(\bm{00}_{1}) \;
    B_{d}(\bm{01}_{1}).
  \end{align*}

  Finally, it then follows that $B_{d}(\bm{0}_{1}) = \bm{0}_{1}.\bigl(
  \swap{\bm{01}_{1}}{\bm{10}_{1}} \; B_{d}(\bm{0}_{1}) \;
  B_{d}(\bm{1}) \bigr)$ commutes with~$B_{d'}(\bm{1}_{1})$ using
  part~(i).  Hence we have established the remaining part~(v).
\end{proof}

We now consider one of the presentations for Brin's higher-dimensional
Thompson group~$nV$ given by Hennig--Matucci~\cite{HM}.  They define
generators $X_{m,d}$ (for $m \geq 0$ and $1 \leq d \leq n$),
\ $C_{m,d}$ (for $m \geq 0$ and $2 \leq d \leq n$), \ $\pi_{m}$ (for
$m \geq 0$) and $\bar{\pi}_{m}$ (for $m \geq 0$) and describe eighteen
families of relations (numbered (1)--(18) on pages~59--60
of~\cite{HM}).  They observe, in~\cite[Theorem~23]{HM}, that these do
indeed give a presentation for~$nV$.  Since we write our maps on the
right, we shall convert the relations to our setting by reversing each
one and record these now for reference.  We have also changed some of
the labels on Hennig--Matucci's generators appearing in the lists so
that our arguments can be unified when establishing
Proposition~\ref{prop:HM-rels} below.  In the
families~\eqref{eq:cut1}--\eqref{eq:cut7}, one should assume that $1
\leq d,d' \leq n$:
\begin{HMeqs}
  X_{m,d} \, X_{q,d'} &= X_{q+1,d'} \, X_{m,d} &&\text{for $m <
    q$,} \label{eq:cut1} \\
  X_{m,d} \, \pi_{q} &= \pi_{q+1} \, X_{m,d} &&\text{for $m <
    q$,} \label{eq:cut2} \\
  X_{m,d} \, \pi_{m} &= \pi_{m+1} \, \pi_{m} \, X_{m+1,d} &&\text{for
    $m \geq 0$,} \label{eq:cut3} \\
  X_{q,d} \, \pi_{m} &= \pi_{m} \, X_{q,d} &&\text{for $q > m+1$,}
  \label{eq:cut4} \\
  X_{m,d} \, \bar{\pi}_{q} &= \bar{\pi}_{q+1} \, X_{m,d} &&\text{for
    $m < q$,} \label{eq:cut5} \\
  X_{m,1} \, \bar{\pi}_{m} &= \bar{\pi}_{m+1} \, \pi_{m} &&\text{for
    $m \geq 0$,} \label{eq:cut6} \\
  X_{m,d'} \, X_{m+1,d'} \, X_{m,d} &= \pi_{m+1} \, X_{m,d} \,
  X_{m+1,d} \, X_{m,d'} &&\text{for $m \geq 0$ and $d \neq
    d'$}. \label{eq:cut7} \\
\intertext{The second collection of relations is as below.  Note that
  we have adjusted the range of the parameters in~\eqref{eq:perm8} to
  bring it into line with the relations given by Brin
  (see~\cite[Eqn.~(22)]{Brin-presentations}).}
  \pi_{m} \, \pi_{q} &= \pi_{q} \, \pi_{m} &&\text{for $\modulus{m -
      q} \geq 2$,} \label{eq:perm8} \\
  \pi_{m} \, \pi_{m+1} \, \pi_{m} &= \pi_{m+1} \, \pi_{m} \, \pi_{m+1}
  &&\text{for $m \geq 0$,} \\
  \pi_{m} \, \bar{\pi}_{q} &= \bar{\pi}_{q} \, \pi_{m} &&\text{for $q
    \geq m+2$,} \\
  \pi_{m} \, \bar{\pi}_{m+1} \, \pi_{m} &= \bar{\pi}_{m+1} \, \pi_{m}
  \, \bar{\pi}_{m+1} &&\text{for $m \geq 0$,} \\
  \pi_{m}^{2} &= 1 &&\text{for $m \geq 0$,} \\
  \bar{\pi}_{m}^{2} &= 1 &&\text{for $m \geq 0$}. \label{eq:perm13}
  \\
\intertext{Finally, in the
  families~\eqref{eq:bakers14}--\eqref{eq:bakers18}, $2 \leq d \leq n$
  and $1 \leq d' \leq n$, unless otherwise indicated:}
  X_{m,d} \, \bar{\pi}_{m} &= \bar{\pi}_{m+1} \, \pi_{m} \, C_{m+1,d}
  &&\text{for $m \geq 0$,} \label{eq:bakers14} \\
  X_{m,d'} \, C_{q,d} &= C_{q+1,d} \, X_{m,d'} &&\text{for $m < q$,}
  \label{eq:bakers15} \\
  X_{m,1} \, C_{m,d} &= \pi_{m+1} \, C_{m+2,d} \, X_{m,d} &&\text{for
    $m \geq 0$,} \label{eq:bakers16} \\
  C_{q,d} \, \pi_{m} &= \pi_{m} \, C_{q,d} &&\text{for $q > m + 1$,}
  \label{eq:bakers17} \\
  C_{m+2,d'} \, X_{m,d'} \, C_{m,d} &= \pi_{m+1} \, C_{m+2,d} \,
  X_{m,d} \, C_{m,d'} &&\text{for $m \geq 0$, \ $1 < d' < d \leq
    n$}. \label{eq:bakers18}
\end{HMeqs}

We now define the elements of our group~$G_{\infty}$ that correspond
to the above generators.  In the following $d$~is an index with $2
\leq d \leq n$.  First we set
\begin{equation}
  \begin{aligned}
    X_{0,1} &= \swap{\bm{0}_{1}}{\bm{1}_{1}} \;\,
    \swap{\bm{0}_{1}}{\bm{10}_{1}} \;\,
    \swap{\bm{10}_{1}}{\bm{11}_{1}},
    \qquad\qquad
    &X_{0,d} &= X_{0,1} \: B_{d}(\bm{1}_{1}), \\*
    \pi_{0} &= \swap{\bm{01}_{1}}{\bm{1}_{1}},
    &\bar{\pi}_{0} &= \swap{\bm{0}_{1}}{\bm{1}_{1}}, \\*
    C_{0,d} &= \swap{\bm{01}_{1}}{\bm{10}_{1}} \;\, B_{d}(\bm{0}_{1})
    \;\, B_{d}(\bm{1}_{1}),
    &C_{1,d} &= B_{d}(\bm{0}_{1}).
  \end{aligned}
  \label{eq:HMgens1}
\end{equation}
These are extended, for positive integers $m \geq 1$, to
\begin{equation}
  \begin{aligned}
  X_{m,d} &= \bm{0}_{1}^{m}.X_{0,d} &&\text{for $1 \leq d \leq n$},
  \\*
  \pi_{m} &= \bm{0}_{1}^{m}.\pi_{0}, \\*
  \bar{\pi}_{m} &= \bm{0}_{1}^{m}.\bar{\pi}_{0}, \\*
  C_{m,d} &= \bm{0}_{1}^{m-1}.C_{1,d} &&\text{for $m \geq 2$ and $2
    \leq d \leq n$},
  \end{aligned}
  \label{eq:HMgens2}
\end{equation}
where $\bm{0}_{1}^{m}$~denotes $\bm{00}\dots{}\bm{0}_{1} =
(00\dots0,\epsilon,\dots,\epsilon)$ with the word~$00\dots0$ having
length~$m$.

Note that, by Lemma~\ref{lem:baker}(iv), \ $C_{1,d} =
\bm{0}_{1}.C_{0,d}$.  Consequently, the definition of~$C_{m,d}$ can be
extended to include the case~$ m =1$; that is, $C_{m,d} =
\bm{0}_{1}^{m}.C_{0,d}$ for all $m \geq 1$ and all indices~$d$ in the
range $2 \leq d \leq n$.  This will enable us to treat these baker's
maps in a uniform manner.

\begin{lemma}
  \label{lem:fullbaker}
  Let $d$~and~$d'$ be indices in the range $2 \leq d,d' \leq n$.
  \begin{enumerate}
  \item $B_{d}(\bm{0}_{1})^{C_{0,d'}} = \bm{0}_{d'}.C_{0,d}$ and
    $B_{d}(\bm{1}_{1})^{C_{0,d'}} = \bm{1}_{d'}.C_{0,d}$.
  \item If $d \neq d'$, then $C_{0,d} = (\bm{0}_{d'}.C_{0,d}) \:
    (\bm{1}_{d'}.C_{0,d})$.
  \end{enumerate}
\end{lemma}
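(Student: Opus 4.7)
The plan is to prove both parts by careful symbolic manipulation, relying on the prior lemmas together with the conjugacy relation~\eqref{rel:conj} and the split relation~\eqref{rel:split}.

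Part~(ii) is the easier of the two and I would tackle it first. Starting from $C_{0,d} = \swap{\bm{01}_1}{\bm{10}_1} \; B_d(\bm{0}_1) \; B_d(\bm{1}_1)$, apply the split relation~\eqref{rel:split} in the $d'$-coordinate to the leading transposition, and apply Lemma~\ref{lem:baker}(vi) to split each factor $B_d(\bm{x}_1)$ along coord $d'$. The resulting six-factor product contains three factors whose supports have a $\bm{0}_{d'}$ prefix and three whose supports have a $\bm{1}_{d'}$ prefix. Since these supports are pairwise disjoint across the two groups, Lemma~\ref{lem:baker}(i) and Lemma~\ref{lem:A-map}(ii) license all the commutations needed to re-group the six factors into $\bm{0}_{d'}.C_{0,d} \cdot \bm{1}_{d'}.C_{0,d}$.

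For part~(i) I would compute $B_d(\bm{0}_1)^{C_{0,d'}}$ by expanding $B_d(\bm{0}_1)$ into transpositions via its defining formulas and Lemma~\ref{lem:baker}(iv), then conjugate each transposition by $C_{0,d'} = \swap{\bm{01}_1}{\bm{10}_1} \cdot B_{d'}(\bm{0}_1) \cdot B_{d'}(\bm{1}_1)$ using~\eqref{rel:conj}. A useful simplification is that $B_d(\bm{0}_1)$ commutes with $B_{d'}(\bm{1}_1)$ by Lemma~\ref{lem:baker}(v), so the essential work lies in conjugation by $\swap{\bm{01}_1}{\bm{10}_1} \cdot B_{d'}(\bm{0}_1)$. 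Evaluated in $nV$, $C_{0,d'}$ acts on $\Gamma(\bm{0}_1)$ by replacing the prefix $\bm{0}_1$ of any address with $\bm{0}_{d'}$. To make the term-by-term conjugacy rule applicable, one first uses~\eqref{rel:split} to split transpositions in $B_d(\bm{0}_1)$ so that every entry has coord~$1$ of length at least two; each such entry $\bm{0}_1.\bm{\xi}$ then has a well-defined $\bullet\,C_{0,d'}$ equal to $\bm{0}_{d'}.\bm{\xi}$. Re-assembling the conjugated transpositions, using~\eqref{rel:split} in reverse to collapse the extra splits, and recognizing the baker-map structure of the result yields $\swap{\bm{0}_{d'}.\bm{01}_1}{\bm{0}_{d'}.\bm{10}_1} \cdot B_d(\bm{0}_{d'}.\bm{0}_1) \cdot B_d(\bm{0}_{d'}.\bm{1}_1)$, which is precisely $\bm{0}_{d'}.C_{0,d}$. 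The $\bm{1}_1$-version follows analogously (or by first conjugating the $\bm{0}_1$-case by $\swap{\bm{0}_1}{\bm{1}_1}$ and using Lemma~\ref{lem:baker}(ii)).

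The principal obstacle will lie in part~(i): ensuring that every partial $\bullet$-action required in the term-by-term conjugation is well-defined on the entries actually at hand, rather than merely on their evaluations in~$nV$. Managing this requires using the split relation freely to deepen addresses before conjugating, and then carefully reversing those splits to reassemble the right-hand side in its expected factored form.
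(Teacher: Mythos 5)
Your overall strategy matches the paper's for both parts.  Part~(ii) is exactly the split-and-regroup argument (the paper dismisses it as ``an extension of Lemma~\ref{lem:baker}(vi) established by a similar argument''), and for part~(i) the central mechanism is the same: use the split relation~\eqref{rel:split} to deepen entries until the partial action~$\bullet\,C_{0,d'}$ of the word $C_{0,d'}$ is defined on each of them, compute those partial actions via the evaluation of~$C_{0,d'}$ in~$nV$ as the primary baker's map, apply the conjugacy relations~\eqref{rel:conj} termwise, and reassemble.  The paper packages this in three layers (first $A_{d}(\bm{x}_{1}.\bm{\alpha},\bm{x}_{1}.\bm{\beta})^{C_{0,d'}} = A_{d}(\bm{x}_{d'}.\bm{\alpha},\bm{x}_{d'}.\bm{\beta})$, then the analogous statement for~$\hat{B}_{d}$, then for~$B_{d}$, and finally Lemma~\ref{lem:baker}(iv)), but that is a difference of bookkeeping, not of method.

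The ``useful simplification'' you propose in part~(i) is, however, incorrect.  You claim that since $B_{d}(\bm{0}_{1})$ commutes with~$B_{d'}(\bm{1}_{1})$ (Lemma~\ref{lem:baker}(v)), the essential work reduces to conjugation by $\swap{\bm{01}_{1}}{\bm{10}_{1}}\,B_{d'}(\bm{0}_{1})$.  But $B_{d'}(\bm{1}_{1})$ is the \emph{last} factor of~$C_{0,d'}$, and what it conjugates is not $B_{d}(\bm{0}_{1})$ but the intermediate element $B_{d}(\bm{0}_{1})^{\swap{\bm{01}_{1}}{\bm{10}_{1}}\,B_{d'}(\bm{0}_{1})}$, which evaluates in~$nV$ to a homeomorphism supported on $\Gamma(\bm{0}_{1}.\bm{0}_{d'}) \cup \Gamma(\bm{10}_{1})$.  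Since $\Gamma(\bm{10}_{1}) \subset \Gamma(\bm{1}_{1})$, this does \emph{not} commute with~$B_{d'}(\bm{1}_{1})$; indeed conjugating by~$B_{d'}(\bm{1}_{1})$ is exactly what carries $\Gamma(\bm{10}_{1})$ onto~$\Gamma(\bm{1}_{1}.\bm{0}_{d'})$ and produces the required support~$\Gamma(\bm{0}_{d'})$.  Dropping that factor would therefore yield the wrong element.  If you simply omit the shortcut and conjugate by the full word~$C_{0,d'}$ after deepening, the remainder of your outline is sound and coincides with the paper's calculation.
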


By inserting the prefix~$\bm{1}_{1}$ into the entries of
transpositions in part~(i) of this lemma, it follows with use of
Lemma~\ref{lem:baker}(iv) that:
\begin{equation}
  B_{d}(\bm{10}_{1})^{B_{d'}(\bm{1}_{1})} =
  B_{d}(\bm{1}_{1}.\bm{0}_{d'})
  \qquad \text{and} \qquad
  B_{d}(\bm{11}_{1})^{B_{d'}(\bm{1}_{1})} =
  B_{d}(\bm{1}_{1}.\bm{1}_{d'})
  \label{eq:baker-conj}
\end{equation}

\begin{proof}
  (i)~First note that, by suitable choice of~$\bm{\gamma}$ appearing
  in the definition~\eqref{eq:hatB-def}, we can express~$C_{0,d'}$ as
  a product of transpositions whose entries have non-empty coordinates
  only for index~$1$ and index~$d'$.  Consequently, provided the
  index~$1$ and index~$d'$ coordinates of an address~$\bm{\zeta}$ are
  sufficiently long, $\bm{\zeta} \bullet C_{0,d'}$ is defined.
  Furthermore $C_{0,d'}$~evaluates in~$nV$ to the (primary) baker's
  map with full support on~$\Gamma$, so this value $\bm{\zeta} \bullet
  C_{0,d'}$ coincides with that obtained when we act with the baker's
  map.  Now if $x \in \{0,1\}$, then $(\bm{x}_{1}.\bm{\zeta}) \bullet
  C_{0,d'}$ is also defined (as the required coordinates are
  sufficiently long) and equals $\bm{x}_{d'}.\bm{\zeta}$ in view of
  how the baker's map acts.

  If $\swap{\bm{\alpha}}{\bm{\beta}}$~is any transposition,
  apply~\eqref{rel:split} repeatedly to express it as a product of
  transpositions~$\swap{\bm{\zeta}}{\bm{\eta}}$ with the index~$1$ and
  index~$d$ coordinates of $\bm{\zeta}$~and~$\bm{\eta}$ sufficiently
  long that the partial action of~$C_{0,d'}$ upon them is defined.  By
  inserting~$\bm{x}_{1}$ as prefix into all the transpositions
  involved, we express
  $\swap{\bm{x}_{1}.\bm{\alpha}}{\bm{x}_{1}.\bm{\beta}}$ as a product
  of
  transpositions~$\swap{\bm{x}_{1}.\bm{\zeta}}{\bm{x_{1}}.\bm{\eta}}$.
  Since $\bm{x}_{1}.\bm{\zeta} \bullet C_{0,d'} =
  \bm{x}_{d'}.\bm{\zeta}$ by our above observation and similarly for
  $\bm{x}_{1}.\bm{\eta}$, we deduce
  \[
  \swap{\bm{x}_{1}.\bm{\zeta}}{\bm{x}_{1}.\bm{\eta}}^{C_{0,d'}} =
  \swap{\bm{x}_{d'}.\bm{\zeta}}{\bm{x}_{d'}.\bm{\eta}}.
  \]
  We now recombine the resulting transpositions
  using~\eqref{rel:split} to conclude
  \[
  \swap{\bm{x}_{1}.\bm{\alpha}}{\bm{x}_{1}.\bm{\beta}}^{C_{0,d'}} =
  \swap{\bm{x}_{d'}.\bm{\alpha}}{\bm{x}_{d'}.\bm{\beta}}.
  \]

  Now if $g$~is any element of~$G_{\infty}$, then it is a product of
  transpositions and it follows from the above calculation that
  \[
  (\bm{x}_{1}.g)^{C_{0,d'}} = \bm{x}_{d'}.g
  \]
  for any $x \in \{0,1\}$.  The claimed equations now follow by taking
  $g = C_{0,d}$ and noting, by use Lemma~\ref{lem:baker}(iv), that
  $B_{d}(\bm{x}_{1}) = \bm{x}_{1}.C_{0,d}$.

  Part~(ii) is an extension of Lemma~\ref{lem:baker}(vi) established
  by a similar argument.
\end{proof}

\begin{prop}
  \label{prop:HM-rels}
  The elements $X_{m,d}$,~$C_{m,d}$, $\pi_{m}$ and~$\bar{\pi}_{m}$
  of~$G_{\infty}$ defined in Equations~\eqref{eq:HMgens1}
  and~\eqref{eq:HMgens2} satisfy the
  relations~\eqref{eq:cut1}--\eqref{eq:bakers18}.
\end{prop}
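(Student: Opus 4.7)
The plan is to verify each of the eighteen families of relations \eqref{eq:cut1}--\eqref{eq:bakers18} in turn. A preliminary observation is that, together with the remark $C_{m,d} = \bm{0}_{1}^{m}.C_{0,d}$ valid for all $m \geq 1$ established just before the statement, the definitions in~\eqref{eq:HMgens1}--\eqref{eq:HMgens2} express every Hennig--Matucci generator with subscript~$m$ as the prefix~$\bm{0}_{1}^{m}$ applied to its $m = 0$ counterpart. Since the prefix operation $g \mapsto \bm{\delta}.g$ preserves relations in~$G_{\infty}$, the general instance of each relation follows by prefixing its $m = 0$ instance with~$\bm{0}_{1}^{m}$. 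Hence it suffices to verify the $m = 0$ base case of each family.

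I would first dispatch the ``commutation up to reindexing'' relations \eqref{eq:cut1}, \eqref{eq:cut2}, \eqref{eq:cut4}, \eqref{eq:cut5}, \eqref{eq:bakers15}, \eqref{eq:bakers17}, and \eqref{eq:perm8}. In each such case the supports of the two generators involved are essentially disjoint; after suitable application of the split relations \eqref{rel:split} to lengthen entries as needed, the incomparability hypotheses of Lemmas~\ref{lem:prebaker}(i) and~\ref{lem:baker}(i) apply and give commutation with the appropriately reindexed generator. The index shift $q \mapsto q+1$ merely records the relabelling of the basic open set $\Gamma(\bm{0}_{1}^{q})$ to $\Gamma(\bm{0}_{1}^{q+1})$ produced by the cut~$X_{m,d}$. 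The order relations $\pi_m^{2} = 1$ and $\bar{\pi}_{m}^{2} = 1$ (namely~\eqref{eq:perm13} and its companion) come directly from~\eqref{rel:order}, while the two braid relations, for the $\pi_{m}$ alone and for the mixed pair $\pi_{m}, \bar{\pi}_{m+1}$, follow from direct calculation on transpositions of short addresses via the conjugacy relations~\eqref{rel:conj}.

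The remaining ``interaction'' relations \eqref{eq:cut3}, \eqref{eq:cut6}, \eqref{eq:cut7}, \eqref{eq:bakers14}, \eqref{eq:bakers16}, and \eqref{eq:bakers18} require genuine use of the structural lemmas. For \eqref{eq:cut3} and \eqref{eq:cut6}, I would expand $X_{0,d} = X_{0,1}\, B_{d}(\bm{1}_{1})$ and carry out the computation on both sides with addresses of length at most three, alternately invoking split and conjugacy relations. Relations \eqref{eq:bakers14} and \eqref{eq:bakers16} involve a baker's map crossing a single cut, handled by Lemma~\ref{lem:baker}(ii)--(iv) together with the conjugation formula~\eqref{eq:baker-conj}. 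Relation \eqref{eq:cut7} couples two different cut directions and will require Lemma~\ref{lem:baker}(v)--(vi) together with parts (iii)--(iv) of Lemma~\ref{lem:A-map} to commute and re-split baker's map factors.

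The main obstacle is expected to be \eqref{eq:bakers18}, the only family coupling three distinct coordinate directions $1$, $d'$ and~$d$. After reducing to $m = 0$, it reads
\[
C_{2,d'}\, X_{0,d'}\, C_{0,d} = \pi_{1}\, C_{2,d}\, X_{0,d}\, C_{0,d'}
\qquad \text{for $1 < d' < d \leq n$}.
\]
Using the identifications $C_{2,d} = \bm{0}_{1}.B_{d}(\bm{0}_{1})$ and $C_{0,d} = \swap{\bm{01}_{1}}{\bm{10}_{1}}\, B_{d}(\bm{0}_{1})\, B_{d}(\bm{1}_{1})$, the strategy is to conjugate each baker's-map factor on the left through the $X$ and $\pi$ factors using~\eqref{eq:baker-conj} and Lemma~\ref{lem:fullbaker}, to invoke Lemma~\ref{lem:baker}(v) in order to untangle baker's maps in directions $d$ and~$d'$, and finally to collect the resulting product and match it term-by-term with the right-hand side. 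This computation is lengthy but should be essentially mechanical once the correct sequence of conjugations and cancellations is laid out.
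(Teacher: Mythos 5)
Your overall skeleton matches the paper's: reduce to $m=0$ via the prefix operation $g \mapsto \bm{0}_{1}.g$, then verify each family case by case with the structural lemmas doing the heavy lifting for the baker's maps, and treat (HM18) as the final and most involved calculation. However, you have misidentified the mechanism behind the ``shift'' families \eqref{eq:cut1}, \eqref{eq:cut2}, \eqref{eq:cut5}, \eqref{eq:bakers15}. You describe these as ``essentially disjoint supports'' and propose to apply Lemmas~\ref{lem:prebaker}(i) and~\ref{lem:baker}(i), but those lemmas give genuine commutation $gh=hg$ between a baker's map and a transposition whose support is disjoint from it --- they cannot produce a relation of the shape $X_{0,d}\,X_{q,d'} = X_{q+1,d'}\,X_{0,d}$ where the index changes. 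Moreover the supports are not disjoint: $X_{q,d'}$ lives inside $\Gamma(\bm{0}_{1}^{q})$, which is inside the support of $X_{0,d}$. The key fact the paper isolates, and which is missing from your outline, is the prefix-shift conjugation
\[
X_{0,1}^{-1}\,(\bm{00}_{1}.g)\,X_{0,1} \;=\; \bm{0}_{1}.g,
\]
which follows from $\bm{00}_{1}\bullet X_{0,1} = \bm{0}_{1}$ together with the conjugacy relations~\eqref{rel:conj}. This single observation disposes of \eqref{eq:cut1}, \eqref{eq:cut2}, \eqref{eq:cut5} at $m=0$, $d=1$ and of \eqref{eq:bakers15} at $m=0$, $d'=1$; only the \emph{extension} from $d=1$ to $d\geq 2$ uses Lemma~\ref{lem:baker}(i) to say the extra factor $B_{d}(\bm{1}_{1})$ commutes with $X_{q,d'}$, $\pi_{q}$, $\bar{\pi}_{q}$ for $q\geq 1$. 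By contrast \eqref{eq:cut4}, \eqref{eq:bakers17} and \eqref{eq:perm8} really are disjoint-support commutations (for $q>m+1$ the support sits inside $\Gamma(\bm{00}_{1})$, away from $\pi_{0}$), so lumping all seven families together obscures which tool is doing what.

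A smaller point: relation \eqref{eq:cut6} reads $X_{m,1}\,\bar{\pi}_{m}=\bar{\pi}_{m+1}\,\pi_{m}$ and concerns only $X_{m,1}$; there is no baker's map factor to expand, so grouping it with \eqref{eq:cut3} and proposing to ``expand $X_{0,d}=X_{0,1}\,B_{d}(\bm{1}_{1})$'' does not apply. In the paper it is verified alongside \eqref{eq:perm8}--\eqref{eq:perm13} as a pure transposition calculation using \eqref{rel:conj} and \eqref{rel:split}, while \eqref{eq:cut3} is first established at $d=1$ by such a calculation and then extended to $d\geq 2$ via Lemma~\ref{lem:baker}(i)--(iii). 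Your treatment of \eqref{eq:cut7}, \eqref{eq:bakers14}, \eqref{eq:bakers16} and especially \eqref{eq:bakers18} is broadly in line with the paper's: for \eqref{eq:bakers18} the paper, like you, moves the baker's-map factors through $X_{0,1}$ with \eqref{rel:conj}, applies Lemma~\ref{lem:fullbaker}(i) to rewrite $B_{d'}(\bm{0}_{1})B_{d'}(\bm{1}_{1})$ conjugated by $C_{0,d}$, collapses with Lemma~\ref{lem:fullbaker}(ii), and reassembles. So the residual gap is concentrated in the shift families: state and use the conjugation formula $X_{0,1}^{-1}(\bm{00}_{1}.g)X_{0,1}=\bm{0}_{1}.g$ there rather than invoking the disjoint-commutation lemmas, and separate out \eqref{eq:cut4}, \eqref{eq:bakers17}, \eqref{eq:perm8} as the genuinely disjoint cases.
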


Establishing this proposition completes the proof of
Theorem~\ref{thm:infpres} since it establishes that the surjective
homomorphism $\phi \colon G_{\infty} \to nV$ has trivial kernel.

\begin{proof}
  First, as observed earlier, if $u = v$ is a relation in~$G_{\infty}$
  then so is $\bm{0}_{1}.u = \bm{0}_{1}.v$.  Consequently, it suffices
  to establish each of~\eqref{eq:cut1}--\eqref{eq:bakers18} only when $m = 0$.
  Relations~\eqref{eq:perm8}--\eqref{eq:perm13} are the most
  straightforward to verify and follow directly from the assumed
  relations involving transpositions (i.e., \eqref{rel:order}
  and~\eqref{rel:conj}).  Relation~\eqref{eq:cut6} is established in a
  similar manner.  When $q > 1$, observe that both
  $X_{q,d}$~and~$C_{q,d}$ is a product of transpositions all of whose
  entries have $\bm{00}_{1}$ as a prefix.  These transpositions are
  therefore disjoint from~$\pi_{0}$ and relations
  \eqref{eq:cut4}~and~\eqref{eq:bakers17} follow.  We now describe the
  details involved in the other relations.

  Note that $\bm{00}_{1} \bullet X_{0,1} = \bm{0}_{1}$.  Hence, for
  any $g \in G_{\infty}$,
  \[
  X_{0,1}^{-1} (\bm{00}_{1}.g) X_{0,1} = \bm{0}_{1}.g.
  \]
  This establishes~\eqref{eq:cut1}, \eqref{eq:cut2}
  and~\eqref{eq:cut5} in the case when $m = 0$ and $d = 1$, and it
  also establishes~\eqref{eq:bakers15} in the case when $m = 0$ and
  $d' = 1$.  We extend the first three to $d \geq 2$ by use of
  Lemma~\ref{lem:baker}(i) to tell us that
  $B_{d}(\bm{1}_{1})$~commutes with each of $X_{q,d'}$, $\pi_{q}$
  and~$\bar{\pi}_{q}$ for $q > 0$.  Similarly, we
  extend~\eqref{eq:bakers15} to the case when $d' \geq 2$ by using the
  same fact to show $B_{d'}(\bm{1}_{1})$~commutes with~$C_{q,d}$ for
  $q \geq 2$ and, by Lemma~\ref{lem:baker}(v), also with~$C_{1,d}$.

  The relation~\eqref{eq:cut3} when $m = 0$ and $d = 1$ is established
  by collecting transpositions using~\eqref{rel:conj} and one
  application of~\eqref{rel:split}:
  \begin{align*}
    \pi_{1} \, \pi_{0} \, X_{1,1} &= \swap{\bm{001}_{1}}{\bm{01}_{1}}
    \; \swap{\bm{01}_{1}}{\bm{1}_{1}} \;
    \swap{\bm{00}_{1}}{\bm{01}_{1}} \;
    \swap{\bm{00}_{1}}{\bm{010}_{1}} \;
    \swap{\bm{010}_{1}}{\bm{011}_{1}} \\
    &= \swap{\bm{001}_{1}}{\bm{01}_{1}} \;
    \swap{\bm{00}_{1}}{\bm{1}_{1}} \;
    \swap{\bm{00}_{1}}{\bm{10}_{1}} \;
    \swap{\bm{10}_{1}}{\bm{11}_{1}} \;
    \swap{\bm{01}_{1}}{\bm{1}_{1}} \\
    &= \swap{\bm{00}_{1}}{\bm{1}_{1}} \;
    \swap{\bm{01}_{1}}{\bm{11}_{1}} \;
    \swap{\bm{00}_{1}}{\bm{10}_{1}} \;
    \swap{\bm{10}_{1}}{\bm{11}_{1}} \;
    \swap{\bm{01}_{1}}{\bm{1}_{1}} \\
    &= \swap{\bm{00}_{1}}{\bm{1}_{1}} \;
    \swap{\bm{0}_{1}}{\bm{1}_{1}} \;
    \swap{\bm{10}_{1}}{\bm{11}_{1}} \;
    \swap{\bm{01}_{1}}{\bm{1}_{1}} \\
    &= \swap{\bm{0}_{1}}{\bm{1}_{1}} \;
    \swap{\bm{0}_{1}}{\bm{10}_{1}} \;
    \swap{\bm{10}_{1}}{\bm{11}_{1}} \;
    \swap{\bm{01}_{1}}{\bm{1}_{1}}
    = X_{0,1} \, \pi_{0}.
  \end{align*}
  The case when $d \geq 2$ now follows using
  Lemma~\ref{lem:baker}(i)--(iii).

  We establish Relation~\eqref{eq:cut7} first in the case when $d' =
  1$ and $m = 0$.  If $d \geq 2$, then:
  \begin{align*}
    X_{0,1} \, X_{1,1} \, X_{0,d}
    &= X_{0,1} \; X_{1,1} \; X_{0,1} \; B_{d}(\bm{1}_{1}) \\
    &= X_{0,1} \; X_{1,1} \; X_{0,1} \;
    \swap{\bm{101}_{1}}{\bm{110}_{1}} \; B_{d}(\bm{10}_{1}) \;
    B_{d}(\bm{11}_{1}) \\
    &= \swap{\bm{001}_{1}}{\bm{01}_{1}} \; X_{0,1} \; X_{1,1} \;
    X_{0,1} \; B_{d}(\bm{10}_{1}) \; B_{d}(\bm{11}_{1})
    &&\text{by repeated use of~\eqref{rel:conj}} \\
    &= \pi_{1} \; X_{0,1} \; X_{1,1} \; B_{d}(\bm{01}_{1}) \;
    B_{d}(\bm{1}_{1}) \; X_{0,1}
    &&\hspace{-40pt} \text{by Lemma~\ref{lem:baker}(i)--(iii)
      and~\eqref{rel:conj}} \\
    &= \pi_{1} \; X_{0,1} \; B_{d}(\bm{1}_{1}) \; X_{1,1} \;
    B_{d}(\bm{01}_{1}) \; X_{0,1}
    &&\hspace*{-40pt}\text{by Lemma~\ref{lem:baker}(i) to
      move~$B_{d}(\bm{1}_{1})$} \\
    &= \pi_{1} \, X_{0,d} \, X_{1,d} \, X_{0,1}.
  \end{align*}
  We can interchange the roles of $d$~and~$d'$ in~\eqref{eq:cut7} by
  multiplying on the left by~$\pi_{m+1}$.  Hence it remains to
  establish the relation in the cases when both $d,d' \geq 2$.  This
  is achieved as follows:
  \begin{align*}
    X_{0,d'} \, X_{1,d'} \, X_{0,d} &= X_{0,d'} \; X_{1,d'} \; X_{0,1}
    \; B_{d}(\bm{1}_{1}) \\
    &= \pi_{1} \; X_{0,1} \; X_{1,1} \; X_{0,d'} \; B_{d}(\bm{1}_{1})
    &&\text{by the case $d = 1$} \\
    &= \pi_{1} \; X_{0,1} \; X_{1,1} \; X_{0,1} \;
    B_{d'}(\bm{1}_{1}) \; B_{d}(\bm{1}_{1}.\bm{0}_{d'}) \;
    B_{d}(\bm{1}_{1}.\bm{1}_{d'})
    &&\text{by Lemma~\ref{lem:baker}(vi)} \\
    &= \pi_{1} \; X_{0,1} \; X_{1,1} \; X_{0,1} \; B_{d}(\bm{10}_{1})
    \; B_{d}(\bm{11}_{1}) \; B_{d'}(\bm{1}_{1})
    &&\text{by Equation~\eqref{eq:baker-conj}} \\
    &= \pi_{1} \; X_{0,1} \; B_{d}(\bm{1}_{1}) \; X_{1,1} \;
    B_{d}(\bm{01}_{1}) \; X_{0,d'}
    &&\text{by Lemma~\ref{lem:baker} and~\eqref{rel:conj}} \\
    &= \pi_{1} \, X_{0,d} \, X_{1,d} \, X_{0,d'}.
  \end{align*}

  Relation~\eqref{eq:bakers14} is established by using formulae about
  conjugation by~$\swap{\bm{0}_{1}}{\bm{1}_{1}}$:
  \begin{align*}
    X_{0,d} \, \bar{\pi}_{0}
    &= \swap{\bm{0}_{1}}{\bm{1}_{1}} \;
    \swap{\bm{0}_{1}}{\bm{10}_{1}} \;
    \swap{\bm{10}_{1}}{\bm{11}_{1}} \; B_{d}(\bm{1}_{1}) \;
    \swap{\bm{0}_{1}}{\bm{1}_{1}} \\
    &= \swap{\bm{00}_{1}}{\bm{1}_{1}} \;
    \swap{\bm{00}_{1}}{\bm{01}_{1}} \; B_{d}(\bm{0}_{1})
    = \swap{\bm{00}_{1}}{\bm{01}_{1}} \;
    \swap{\bm{01}_{1}}{\bm{1}_{1}} \; B_{d}(\bm{0}_{1})
    = \bar{\pi}_{1} \, \pi_{0} \, C_{1,d}.
  \end{align*}
  For~\eqref{eq:bakers16}, we collect the transpositions
  comprising~$X_{0,1}$ to the right:
  \begin{align*}
    X_{0,1} \, C_{0,d}
    &= X_{0,1} \; \swap{\bm{01}_{1}}{\bm{10}_{1}} \;
    B_{d}(\bm{0}_{1}) \; B_{d}(\bm{1}_{1}) \\
    &= \swap{\bm{001}_{1}}{\bm{01}_{1}} \; B_{d}(\bm{00}_{1}) \;
    X_{0,1} \; B_{d}(\bm{1}_{1})
    = \pi_{1} \, C_{2,d} \, X_{0,d}
  \end{align*}
  Finally consider relation~\eqref{eq:bakers18} in the case when $m =
  0$.  We calculate
  \begin{align*}
    C_{2,d'} \, X_{0,d'} \, C_{0,d}
    &= B_{d'}(\bm{00}_{1}) \; X_{0,1} \; B_{d'}(\bm{1}_{1}) \; C_{0,d}
    \\
    &= X_{0,1} \; B_{d'}(\bm{0}_{1}) \; B_{d'}(\bm{1}_{1}) \; C_{0,d}
    &&\text{by~\eqref{rel:conj} and Lemma~\ref{lem:baker}(i),~(iii)}
    \\
    &= X_{0,1} \; C_{0,d} \; C_{0,d'} &&\text{by
      Lemma~\ref{lem:fullbaker}(i), (ii)} \\
    &= X_{0,1} \; \swap{\bm{01}_{1}}{\bm{10}_{1}} \; B_{d}(\bm{0}_{1})
    \; B_{d}(\bm{1}_{1}) \; C_{0,d'} \\
    &= \swap{\bm{001}_{1}}{\bm{01}_{1}} \; B_{d}(\bm{00}_{1}) \;
    X_{0,1} \; B_{d}(\bm{1}_{1}) \; C_{0,d'}
    &&\text{by~\eqref{rel:conj} and Lemma~\ref{lem:baker}(i),~(iii)}
    \\
    &= \pi_{1} \, C_{2,d} \, X_{0,d} \, C_{0,d'},
  \end{align*}
  as required.  This completes the proof of the proposition and the
  work of this section.
\end{proof}

\section{Finite presentations for~\boldmath$nV$}
\label{sec:finitepres}

In this section, we prove Theorem~\ref{thm:finitepres}.  In the course
of our argument, we shall refer to two particular subsets of the
collection~$\Omega$ of addresses.  Write $\Delta = (X^{2})^{n}$ for
the set of addresses $\bm{\alpha} =
(\alpha_{1},\alpha_{2},\dots,\alpha_{n})$ satisfying
$\length{\alpha_{i}} = 2$ for $i = 1$,~$2$, \dots,~$n$.  Also
define~$\Omega^{\ast}$ to be the set of addresses $\bm{\alpha} =
(\alpha_{1},\alpha_{2},\dots,\alpha_{n})$ satisfying
$\length{\alpha_{i}} \geq 2$ for $i = 1$,~$2$, \dots,~$n$.

We shall define a presentation for a group~$G$ on three generators
$a$,~$b$ and~$c$ and $2n^{2} + 3n + 11$ relations.  The starting point
is a presentation for the symmetric group of degree~$4^{n}$ on two
generators.  According to~\cite[Theorem~A]{GKKL}, this can be achieved
using merely eight relations (independent of the value of~$n$).  A
recent article by Huxford~\cite{Huxford} presents corrections to the
arguments and the relations given in~\cite{GKKL}.  Note, however, that
the number of relations is unaffected and the statement
of~\cite[Theorem~A]{GKKL} remains valid.

Our first family (\ref{rel:R1-Sym} below) of relations is sufficient
to ensure that the generators $a$~and~$b$ satisfy all relations that
hold in the symmetric group of degree~$4^{n}$.  Moreover, all the
relations that we assume are satisfied if one maps $a$,~$b$ and~$c$ to
the corresponding elements of~$nV$ (where for~$c$ we interpret
Relation~\ref{rel:R7-c-def} in~$nV$).  In particular, the resulting
homomorphism $G \to nV$ induces a homomorphism from $H = \langle a,b
\rangle$ onto the above symmetric group.  Consequently, $H \cong
\Sym(\Delta)$ and we may interpret the elements in $H$ as defining
permutations of~$\Delta$.  We therefore use the
symbol~$\swap{\bm{\alpha}}{\bm{\beta}}$, where $\bm{\alpha},
\bm{\beta} \in \Delta$ with $\bm{\alpha} \perp \bm{\beta}$, to denote
certain elements of the subgroup~$H$ and more generally refer to
\emph{permutations} of~$\Delta$ by which we mean the corresponding
elements of this subgroup.  This also means that we can speak of the
support of an element $g \in H$ and use the notation $\bm{\gamma}
\bullet g$ to denote the effect of applying~$g$ to some
address~$\bm{\gamma} \in \Delta$.  (In some sense, this extends the
notation given in Section~\ref{sec:intro}.)

The third generator~$c$ will be used to construct further
transpositions~$\swap{\bm{\alpha}}{\bm{\beta}}$ for other addresses
$\bm{\alpha}, \bm{\beta} \in \Omega^{\ast}$ with $\bm{\alpha} \perp
\bm{\beta}$.  The details of this construction will be described later
in this section, together with appropriate verifications that the
resulting elements of~$G$ are well-defined and satisfy the
relations~\eqref{rel:order}--\eqref{rel:split} listed in
Theorem~\ref{thm:infpres}.  For the remaining discussion, prior to
explaining the construction, we shall assume the existence of the
various transpositions~$\swap{\bm{\alpha}}{\bm{\beta}}$, each of which
will be expressed as some product in~$G$ involving the generators
$a$,~$b$ and~$c$.

To specify the relations that we assume, we shall fix certain elements
of~$H$.  Let $\bm{\delta}^{(0)}$,~$\bm{\delta}^{(1)}$, \dots,
$\bm{\delta}^{(4^{n}-1)}$ be an enumeration of the addresses
in~$\Delta$ and define the following elements of~$H$:
\begin{equation}
  \begin{aligned}
    p &= ( \bm{\delta}^{(n+1)} \; \bm{\delta}^{(n+2)} \, \dots \,
    \bm{\delta}^{(4^{n}-1)} ) \\
    q_{d} &= ( \bm{\delta}^{(0)} \; \bm{\delta}^{(1)} \, \dots \,
    \bm{\delta}^{(d-1)} \; \bm{\delta}^{(d+1)} \, \dots \,
    \bm{\delta}^{(n)} \; \bm{\delta}^{(n+2)} \, \dots \,
    \bm{\delta}^{(4^{n}-1)} )
  \end{aligned}
  \label{eq:perms1}
\end{equation}
The relations that we assume are the following:
\begin{enumerate}
  \renewcommand{\theenumi}{R\arabic{enumi}}
  \renewcommand{\labelenumi}{\theenumi.}
\item \label{rel:R1-Sym}
  eight relations involving $a$~and~$b$ that define the symmetric
  group of degree~$4^{n}$ (from~\cite[Theorem~A]{GKKL});
\item \label{rel:R2-c-disj}
  $[c,p] = \bigl[\, c, \,
  \swap{\bm{\delta}^{(n+2)}}{\bm{\delta}^{(n+3)}} \, \bigr] = 1$;
\item \label{rel:R3-q-disj}
  $\bigl[ \, q_{d}, \,
  \swap{\bm{\delta}^{(d)}.\bm{x}_{d}}{\bm{\delta}^{(n+1)}} \, \bigr] =
  1$ \ for $d = 1$,~$2$, \dots,~$n$ and $x \in \{0,1\}$;
\item \label{rel:R4-c-disj2}
  $\bigl[ \, c, \,
  \swap{\bm{\delta}^{(n+2)}.\bm{x}_{d}}{\bm{\delta}^{(n+3)}} \, \bigr]
  = 1$ \ for $d = 1$,~$2$, \dots,~$n$ and $x \in \{0,1\}$;
\item \label{rel:R5-welldef}
  $ \swap{\bm{\delta}^{(0)}.\bm{x}_{d}}{\bm{\delta}^{(1)}}^{%
  \swap{\bm{\delta}^{(0)}}{\bm{\delta}^{(2)}.\bm{y}_{d'}}} =
  \swap{\bm{\delta}^{(0)}.\bm{y}_{d'}}{\bm{\delta}^{(1)}}^{%
    \swap{\bm{\delta}^{(0)}}{\bm{\delta}^{(2)}.\bm{x}_{d}}} $
  
  for all choices of distinct indices $d,d' \in \{1,2,\dots,n\}$ and
  all pairs $x,y \in \{0,1\}$;
\item \label{rel:R6-split}
  $ \swap{\bm{\delta}^{(0)}}{\bm{\delta}^{(1)}} =
  \swap{\bm{\delta}^{(0)}.\bm{0}_{d}}{\bm{\delta}^{(1)}.\bm{0}_{d}}
  \;
  \swap{\bm{\delta}^{(0)}.\bm{1}_{d}}{\bm{\delta}^{(1)}.\bm{1}_{d}} $
  \ for $d = 1$,~$2$, \dots,~$n$; and
\item \label{rel:R7-c-def}
  $c = \swap{\bm{\delta}^{(0)}}{\bm{\delta}^{(1)}.\bm{0}_{1}} \;
  \swap{\bm{\delta}^{(0)}}{\bm{\delta}^{(1)}.\bm{1}_{1}} \;
  \swap{\bm{\delta}^{(0)}}{\bm{\delta}^{(2)}.\bm{0}_{2}} \;
  \swap{\bm{\delta}^{(0)}}{\bm{\delta}^{(2)}.\bm{1}_{2}}$ \\[5pt]
  \mbox{}\hfill$\cdot \ldots \,
  \swap{\bm{\delta}^{(0)}}{\bm{\delta}^{(n)}.\bm{0}_{n}} \;
  \swap{\bm{\delta}^{(0)}}{\bm{\delta}^{(n)}.\bm{1}_{n}}$.\qquad\mbox{}
\end{enumerate}
We shall establish Theorem~\ref{thm:finitepres} by showing that the
group~$G$, generated by $a$,~$b$ and~$c$ subject to the $2n^{2} + 3n +
11$ relations listed in~\ref{rel:R1-Sym}--\ref{rel:R7-c-def}, is
isomorphic to the group~$nV$.

The first part of the proof of Theorem~\ref{thm:finitepres} involves
the definition of all transpositions~$\swap{\bm{\alpha}}{\bm{\beta}}$,
for $\bm{\alpha}, \bm{\beta} \in \Omega^{\ast}$ with $\bm{\alpha}
\perp \bm{\beta}$, as elements of the group~$G$, verifying that these
transpositions are well-defined, and that they satisfy all those
Relations~\eqref{rel:order}--\eqref{rel:split} listed in
Theorem~\ref{thm:infpres} involving only addresses in~$\Omega^{\ast}$.
(In this part of the proof we will not consider any address
$\bm{\alpha} = (\alpha_{1},\alpha_{2},\dots,\alpha_{n})$ with one or
more coordinates satisfying $\length{\alpha_{i}} \leq 1$.)  We shall
also verify explicitly the relation listed in
Equation~\eqref{rel:symm} since we depend upon this to reduce the
number of cases when verifying Relation~\eqref{rel:conj} (see
Lemma~\ref{lem:conj-types} below).  The overall process is an
induction argument based on the number of coordinates of an
address~$\bm{\alpha}$ having some specified length.  To make this
precise, we set $m(\bm{\alpha}) = \max \bigl\{ \length{\alpha_{1}},
\length{\alpha_{2}}, \dots, \length{\alpha_{n}} \bigr\}$ and define
$k(\bm{\alpha})$~to be the number of coordinates~$\alpha_{i}$
satisfying $\length{\alpha_{i}} = m(\bm{\alpha})$.  The \emph{weight}
of~$\bm{\alpha}$ is then the ordered pair $\wt(\bm{\alpha}) =
(m(\bm{\alpha}), k(\bm{\alpha}))$.  This pair then measures the
longest coordinate of~$\bm{\alpha}$ and counts the number of longest
coordinates in the address.  Thus, for example, $\Delta$~consists of
all addresses of weight~$(2,n)$.

We order weights lexicographically, so $\wt(\bm{\alpha}) <
\wt(\bm{\beta})$ means either $m(\bm{\alpha}) <
m(\bm{\beta})$, or $m(\bm{\alpha}) = m(\bm{\beta})$ and
$k(\bm{\alpha}) < k(\bm{\beta})$; that is, either the coordinates
of~$\bm{\alpha}$ are all shorter than the longest of~$\bm{\beta}$ or
that the greatest length is the same but $\bm{\alpha}$~has fewer of
these longest coordinates than~$\bm{\beta}$.

In each step of the induction, we assume that we already have defined
transpositions whose entries have weight less than~$(m,k)$ and
verified all Relations~\eqref{rel:order}--\eqref{rel:split}, and
also~\eqref{rel:symm}, involving such transpositions.  Our first stage
is then to define transpositions~$\swap{\bm{\alpha}}{\bm{\beta}}$
where $\wt(\bm{\alpha}) = (m,k)$ and $\bm{\beta} \in \Delta$, or
\emph{vice versa}.  We then verify that our definitions make sense and
that all the required relations involving the newly defined
transpositions are satisfied.  At the second stage, we perform the
same definitions and verifications for the remaining
transpositions~$\swap{\bm{\alpha}}{\bm{\beta}}$ with
$\wt(\bm{\alpha}), \wt(\bm{\beta}) \leq (m,k)$.  The new
transpositions will always be conjugates of transpositions
from~$\Sym(\Delta)$ and consequently Relation~\eqref{rel:order} will
always hold and we do not verify it explicitly.

A significant part of our argument is verifying the conjugacy
relations~\eqref{rel:conj}.  The following lemma describes which
instances of this relation are needed at each step of the induction.

\begin{lemma}
  \label{lem:conj-types}
  Assume that all transpositions defined at some stage of the
  induction satisfy $\swap{\bm{\alpha}}{\bm{\beta}} =
  \swap{\bm{\beta}}{\bm{\alpha}}$ and
  $\swap{\bm{\alpha}}{\bm{\beta}}^{2} = 1$.  To verify all required
  conjugacy relations~\eqref{rel:conj} it is sufficient to establish
  them in the following cases:
  \begin{enumerate}
    \renewcommand{\theenumi}{(\Alph{enumi})}
  \item Every pair from $\{ \bm{\alpha}, \bm{\beta}, \bm{\gamma},
    \bm{\delta} \}$ is incomparable.  The required relation then has
    the form
    \[
    \bigl[ \, \swap{\bm{\alpha}}{\bm{\beta}} \, , \,
      \swap{\bm{\gamma}}{\bm{\delta}} \, \bigr] = 1.
    \]
    
  \item $\bm{\gamma} \preceq \bm{\alpha}$ and every pair from $\{
    \bm{\beta}, \bm{\gamma}, \bm{\delta} \}$ is incomparable.  Then
    $\bm{\alpha} = \bm{\gamma}\bm{\eta}$ for some $\bm{\eta} \in
    \Omega$ and the required relation is
    \[
    \swap{\bm{\gamma\eta}}{\bm{\beta}}^{\swap{\bm{\gamma}}{\bm{\delta}}}
    = \swap{\bm{\delta\eta}}{\bm{\beta}}.
    \]

  \item $\bm{\gamma} \prec \bm{\alpha}$, \ $\bm{\delta} \preceq
    \bm{\beta}$ and $\bm{\gamma} \perp \bm{\delta}$.  Then
    $\bm{\alpha} = \bm{\gamma\eta}$ and $\bm{\beta} =
    \bm{\delta\theta}$ for some $\bm{\eta},\bm{\theta} \in \Omega$ and
    the required relation is
    \[
    \swap{\bm{\gamma\eta}}{\bm{\delta\theta}}^{%
      \swap{\bm{\gamma}}{\bm{\delta}}} =
    \swap{\bm{\gamma\theta}}{\bm{\delta\eta}}.
    \]

  \item $\bm{\gamma} \prec \bm{\alpha}, \bm{\beta}$ and $\bm{\gamma}
    \perp \bm{\delta}$.  Then $\bm{\alpha} = \bm{\gamma\eta}$ and
    $\bm{\beta} = \bm{\gamma\theta}$ for (necessarily non-empty)
    $\bm{\eta}, \bm{\theta} \in \Omega$ and the required relation is
    \[
    \swap{\bm{\gamma\eta}}{\bm{\gamma\theta}}^{%
      \swap{\bm{\gamma}}{\bm{\delta}}} =
    \swap{\bm{\delta\eta}}{\bm{\delta\theta}}.
    \]
  \end{enumerate}
\end{lemma}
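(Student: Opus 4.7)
The plan is to enumerate, up to the symmetries furnished by the assumed relations $\swap{\bm{\alpha}}{\bm{\beta}} = \swap{\bm{\beta}}{\bm{\alpha}}$ and $\swap{\bm{\gamma}}{\bm{\delta}} = \swap{\bm{\delta}}{\bm{\gamma}}$, all possible configurations of the four addresses appearing in~\eqref{rel:conj} and to match each with one of (A)--(D). The starting observation is that, because $\bm{\alpha} \bullet \swap{\bm{\gamma}}{\bm{\delta}}$ must be defined, each of $\bm{\alpha}$ and $\bm{\beta}$ falls into exactly one of three mutually exclusive categories: it has $\bm{\gamma}$ as a prefix, it has $\bm{\delta}$ as a prefix, or it is incomparable with both; the mutual exclusivity of the first two follows from $\bm{\gamma} \perp \bm{\delta}$.

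This gives at most nine combinations for the pair $(\bm{\alpha},\bm{\beta})$. First I would apply the symmetry in $\{\bm{\gamma},\bm{\delta}\}$ to assume that whenever $\bm{\alpha}$ is comparable to one of them, it extends $\bm{\gamma}$; next, applying the symmetry in $\{\bm{\alpha},\bm{\beta}\}$, I would assume that if only one of $\bm{\alpha},\bm{\beta}$ is comparable to $\{\bm{\gamma},\bm{\delta}\}$ then it is $\bm{\alpha}$. The remaining canonical configurations are precisely: (A) neither $\bm{\alpha}$ nor $\bm{\beta}$ is comparable to $\bm{\gamma}$ or $\bm{\delta}$; (B) only $\bm{\alpha}$ extends $\bm{\gamma}$; (C) $\bm{\alpha}$ extends $\bm{\gamma}$ and $\bm{\beta}$ extends $\bm{\delta}$; (D) both $\bm{\alpha}$ and $\bm{\beta}$ extend $\bm{\gamma}$. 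I would check that the supplementary incomparability statements are automatic: in (A) this is the definition of category~(c); in (B) it follows from the incomparability requirement on $\bm{\beta}$; in (D) the equalities $\bm{\alpha} = \bm{\gamma}\bm{\eta}$ and $\bm{\beta} = \bm{\gamma}\bm{\theta}$ force $\bm{\eta}, \bm{\theta} \neq \bm{\epsilon}$, for otherwise $\bm{\alpha}$ and $\bm{\beta}$ would be comparable, contradicting $\bm{\alpha} \perp \bm{\beta}$.

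The remaining subtlety, which I would address last, is the degeneracy in case~(C): the lemma requires $\bm{\gamma} \prec \bm{\alpha}$ strictly, but the reduction above still allows $\bm{\alpha} = \bm{\gamma}$. When this happens with $\bm{\beta} \neq \bm{\delta}$, I would interchange the roles of $(\bm{\alpha},\bm{\beta})$ and $(\bm{\gamma},\bm{\delta})$ in the relation (a legitimate move since $\bm{\gamma}, \bm{\delta}$ then stand in the requisite extension relationship with $\bm{\alpha}, \bm{\beta}$, so that $\bm{\gamma} \bullet \swap{\bm{\alpha}}{\bm{\beta}}$ and $\bm{\delta} \bullet \swap{\bm{\alpha}}{\bm{\beta}}$ are defined): after relabelling, $\bm{\delta} \preceq \bm{\beta}$ strict becomes the required strict containment. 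The fully degenerate subcase $\bm{\alpha} = \bm{\gamma}$, $\bm{\beta} = \bm{\delta}$ reduces to $\swap{\bm{\gamma}}{\bm{\delta}}^{\swap{\bm{\gamma}}{\bm{\delta}}} = \swap{\bm{\delta}}{\bm{\gamma}}$, which is immediate from $\swap{\bm{\gamma}}{\bm{\delta}}^{2} = 1$ and the symmetry relation, both assumed by hypothesis. The whole argument is therefore a structural case enumeration with no computation; the main bookkeeping task, rather than a genuine obstacle, is verifying that each relabelling consistently transforms the right-hand side $\swap{\bm{\alpha}\!\bullet\!\swap{\bm{\gamma}}{\bm{\delta}}}{\bm{\beta}\!\bullet\!\swap{\bm{\gamma}}{\bm{\delta}}}$ into the form claimed in the target case (A)--(D).
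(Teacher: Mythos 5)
Your overall strategy—classify each of $\bm{\alpha}$ and $\bm{\beta}$ by how it sits relative to $\bm{\gamma}$ and $\bm{\delta}$, then normalize via the symmetries $\swap{\bm{\alpha}}{\bm{\beta}}=\swap{\bm{\beta}}{\bm{\alpha}}$ and $\swap{\bm{\gamma}}{\bm{\delta}}=\swap{\bm{\delta}}{\bm{\gamma}}$—is the same case analysis the paper carries out (very tersely), and it reaches the right four canonical configurations. The explicit observations that strictness is automatic in~(D), and that the fully degenerate $\bm{\alpha}=\bm{\gamma}$, $\bm{\beta}=\bm{\delta}$ subcase of~(C) collapses to $g^{g}=g$, are both correct.

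However, the resolution you propose for the remaining degenerate subcase of~(C), namely $\bm{\alpha}=\bm{\gamma}$ with $\bm{\delta}\prec\bm{\beta}$, is wrong. You claim it is legitimate to interchange the roles of $(\bm{\alpha},\bm{\beta})$ and $(\bm{\gamma},\bm{\delta})$ because ``$\bm{\gamma}\bullet\swap{\bm{\alpha}}{\bm{\beta}}$ and $\bm{\delta}\bullet\swap{\bm{\alpha}}{\bm{\beta}}$ are defined.'' But $\bm{\delta}\bullet\swap{\bm{\alpha}}{\bm{\beta}}$ is \emph{not} defined here: $\bm{\delta}$ is a proper prefix of $\bm{\beta}$, so $\bm{\delta}$ is neither incomparable with $\bm{\beta}$ nor does it have $\bm{\beta}$ as a prefix, and it is incomparable with $\bm{\alpha}=\bm{\gamma}$. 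That places $\bm{\delta}$ in the ``undefined'' clause of Equation~\eqref{eq:actiondef}. So the swap you describe does not produce a valid instance of~\eqref{rel:conj}, and the justification does not go through. Fortunately no such detour is needed: simply apply \emph{both} symmetries $\{\bm{\alpha},\bm{\beta}\}$ and $\{\bm{\gamma},\bm{\delta}\}$ simultaneously. Writing $\bm{\beta}=\bm{\delta\theta}$ with $\bm{\theta}$ non-empty, the replacement $(\bm{\alpha},\bm{\beta},\bm{\gamma},\bm{\delta})\mapsto(\bm{\delta\theta},\bm{\gamma},\bm{\delta},\bm{\gamma})$ yields $\bm{\gamma}\prec\bm{\alpha}$ strictly and $\bm{\delta}\preceq\bm{\beta}$ (with equality), which is exactly configuration~(C) of the lemma. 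This is the same normalization you already used in your second paragraph; you should have recognized that it suffices here too, rather than introducing an interchange of conjugand and conjugator that is not available.
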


\begin{proof}
  In order that $\bm{\alpha} \bullet \swap{\bm{\gamma}}{\bm{\delta}}$
  be defined, we require (i)~$\bm{\gamma} \perp \bm{\alpha}$ or
  $\bm{\gamma} \preceq \bm{\alpha}$, and (ii)~$\bm{\delta} \perp
  \bm{\alpha}$ or $\bm{\delta} \preceq \bm{\alpha}$.  A similar pair
  of conditions apply when $\bm{\beta} \bullet
  \swap{\bm{\gamma}}{\bm{\delta}}$ is defined.  We analyze the four
  resulting conditions.  Furthermore, we note, for example,
  that if $\bm{\gamma} \preceq \bm{\alpha}$ or $\bm{\delta} \preceq
  \bm{\alpha}$, then by exploiting the symmetry
  $\swap{\bm{\gamma}}{\bm{\delta}} = \swap{\bm{\delta}}{\bm{\gamma}}$
  we may assume that in fact $\bm{\gamma} \preceq \bm{\alpha}$.  This
  reduces the four conditions to the configurations described in the
  statement of the lemma.
\end{proof}

\paragraph{Base Step:} As the base step of the induction, we rely upon
the assumed Relations~\ref{rel:R1-Sym} to ensure that the
transpositions in $H = \langle a,b \rangle$ exist and satisfy all the
relations  in the symmetric group of degree~$4^{n}$; that is, we may
use all transpositions~$\swap{\bm{\alpha}}{\bm{\beta}}$, where
$\bm{\alpha}, \bm{\beta} \in \Omega^{\ast}$ satisfy $\bm{\alpha} \perp
\bm{\beta}$ and $\wt(\bm{\alpha}) = \wt(\bm{\beta}) = (2,n)$, and all
relations~\eqref{rel:order}, \eqref{rel:conj} and~\eqref{rel:symm}
between them.  (There are no relations of the form~\eqref{rel:split}
involving only transpositions from~$H$.)

\paragraph{Induction, Stage~1:} Let us assume that, for some fixed
weight $(m,k) \geq (3,1)$, we have already defined all
transpositions~$\swap{\bm{\alpha}}{\bm{\beta}}$ where $\bm{\alpha},
\bm{\beta} \in \Omega^{\ast}$ satisfy $\bm{\alpha} \perp \bm{\beta}$
and $\wt(\bm{\alpha}), \wt(\bm{\beta}) < (m,k)$ and verified all
relations~\eqref{rel:order}--\eqref{rel:symm} involving such
transpositions.  We now define those tranpositions with one entry of
weight~$(m,k)$ and one entry from~$\Delta$.  The definitions and
argument actually depend upon whether $(m,k) = (3,1)$ or $(m,k) >
(3,1)$.  Consequently, we split into these cases.

\spc

Suppose then that $(m,k) = (3,1)$.  First set
\begin{equation}
  \swap{\bm{\delta}^{(d)}.\bm{x}_{d}}{\bm{\delta}^{(n+1)}} \defeq
  \swap{\bm{\delta}^{(0)}}{\bm{\delta}^{(n+1)}}^{c^{2d+x-1}}
  \label{eq:31-basicdef}
\end{equation}
where $d = 1$,~$2$, \dots,~$n$ and $x \in \{0,1\}$.  (Since
$d$~and~$x$ are non-negative integers, the sum $2d+x-1$ is defined.)
We now define transpositions~$\swap{\bm{\alpha}}{\bm{\beta}}$ where
$\bm{\alpha}, \bm{\beta} \in \Omega^{\ast}$ satisfy $\bm{\alpha} \perp
\bm{\beta}$, \ $\wt(\bm{\alpha}) = (3,1)$ and $\bm{\beta} \in
\Delta$.  Then $\bm{\alpha} =
(\alpha_{1},\alpha_{2},\dots,\alpha_{n})$ where $\length{\alpha_{d}} =
3$ for some index~$d$ and $\length{\alpha_{i}} = 2$ for all other
indices~$i$.  Write $\bm{\alpha} = \hat{\bm{\alpha}}.\bm{x}_{d}$,
where $\hat{\bm{\alpha}} \in \Delta$, and choose a permutation~$\sigma
\in H$ that moves $\bm{\delta}^{(d)}$ to~$\hat{\bm{\alpha}}$ and
$\bm{\delta}^{(n+1)}$ to~$\bm{\beta}$.  We then define
\begin{equation}
  \swap{\bm{\alpha}}{\bm{\beta}} \defeq
  \swap{\bm{\delta}^{(d)}.\bm{x}_{d}}{\bm{\delta}^{(n+1)}}^{\sigma}
  \label{eq:31-swap}
\end{equation}
in terms of the transposition defined in~\eqref{eq:31-basicdef}.  To
achieve the symmetry required by the relation~\eqref{rel:symm}, we also define
$\swap{\bm{\beta}}{\bm{\alpha}} \defeq \swap{\bm{\alpha}}{\bm{\beta}}$
for such $\bm{\alpha}$~and~$\bm{\beta}$.  To verify that these
definitions are independent of the choice of permutation $\sigma \in
H$, we use the following lemma.

\begin{lemma}
  \label{lem:31-def}
  \begin{enumerate}
  \item If $\sigma \in H$~is a permutation of~$\Delta$ with support
    disjoint from $\bm{\delta}^{(0)}$,~$\bm{\delta}^{(1)}$,
    \dots,~$\bm{\delta}^{(n)}$, then $[c,\sigma] = 1$.
  \item Let $d = 1$,~$2$, \dots,~$n$ and $x \in \{0,1\}$.  Then
    $\swap{\bm{\delta}^{(d)}.\bm{x}_{d}}{\bm{\delta}^{(n+1)}}$
    commutes with every permutation in~$H$ that has support disjoint
    from $\bm{\delta}^{(d)}$~and~$\bm{\delta}^{(n+1)}$.
  \end{enumerate}
\end{lemma}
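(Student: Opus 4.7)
The plan for Lemma~\ref{lem:31-def} is to use the same underlying principle for both parts: a long cycle together with a well-chosen extra element generates the full symmetric group on its support, and the centralizer argument then produces the pointwise stabilizer we need.

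For part~(i), Relations~R2 state that $c$ commutes with both $p$ and $\swap{\bm{\delta}^{(n+2)}}{\bm{\delta}^{(n+3)}}$, and therefore with the subgroup of~$H$ that they generate. The cycle~$p$ has length $4^{n} - n - 1$ with support $\{\bm{\delta}^{(n+1)}, \ldots, \bm{\delta}^{(4^{n}-1)}\}$, and $\swap{\bm{\delta}^{(n+2)}}{\bm{\delta}^{(n+3)}}$ interchanges two cyclically consecutive points of~$p$. Conjugating this transposition by successive powers of~$p$ produces every cyclically adjacent transposition along~$p$, and these generate $\Sym\bigl(\{\bm{\delta}^{(n+1)}, \ldots, \bm{\delta}^{(4^{n}-1)}\}\bigr)$. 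This is exactly the subgroup of~$H$ consisting of all permutations with support disjoint from $\bm{\delta}^{(0)}, \ldots, \bm{\delta}^{(n)}$, and so every such~$\sigma$ commutes with~$c$.

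For part~(ii), write $t = \swap{\bm{\delta}^{(d)}.\bm{x}_{d}}{\bm{\delta}^{(n+1)}}$ and $T = \{\bm{\delta}^{(n+2)}, \ldots, \bm{\delta}^{(4^{n}-1)}\}$. Relation~R3 gives $[t,q_{d}] = 1$ immediately. Next, for any $\tau \in \Sym(T) \leq H$ the permutation~$\tau$ fixes $\bm{\delta}^{(0)}, \ldots, \bm{\delta}^{(n)}$, so by part~(i) it commutes with~$c$ and hence with every power~$c^{2d+x-1}$; being in~$H$ and also fixing $\bm{\delta}^{(0)}$ and $\bm{\delta}^{(n+1)}$, it further commutes with the transposition $\swap{\bm{\delta}^{(0)}}{\bm{\delta}^{(n+1)}}$ inside~$H$. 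Combining these, $\tau$ commutes with $\swap{\bm{\delta}^{(0)}}{\bm{\delta}^{(n+1)}}^{c^{2d+x-1}} = t$. Consequently, the centralizer of~$t$ in~$H$ contains the subgroup $K = \langle q_{d}, \Sym(T) \rangle$, and it remains to verify that $K = \Sym\bigl(\Delta \setminus \{\bm{\delta}^{(d)}, \bm{\delta}^{(n+1)}\}\bigr)$. Since $q_{d}$ is a single cycle of length $4^{n}-2$, the set $q_{d}\cdot T$ differs from~$T$ by exactly one element (the first element of~$T$ in cycle order is removed and $\bm{\delta}^{(0)}$ is added), so $\Sym(T)$ and its $q_{d}$-conjugate $\Sym(q_{d}\cdot T)$ meet in a subset of size~$\modulus{T}-1$. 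The standard fact that two copies of~$\Sym_{m}$ sharing an $(m-1)$-element subset generate $\Sym_{m+1}$ on their union then yields $\Sym\bigl(T \cup \{\bm{\delta}^{(0)}\}\bigr) \leq K$. Feeding this enlarged set back into the same argument with~$q_{d}$ and iterating a further $n-1$ times sweeps in successively $\bm{\delta}^{(1)}, \bm{\delta}^{(2)}, \ldots, \bm{\delta}^{(n)}$ (skipping~$\bm{\delta}^{(d)}$) to yield the claimed equality.

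The main obstacle I anticipate is managing the cycle-order bookkeeping in the iterative step of part~(ii): one must confirm that each further application of~$q_{d}$ adjoins precisely the next element of $\{\bm{\delta}^{(0)}, \ldots, \bm{\delta}^{(n)}\} \setminus \{\bm{\delta}^{(d)}\}$ and leaves an overlap exactly one element short of the current $\Sym$-support, so that the two-symmetric-groups generation lemma applies cleanly at every stage.
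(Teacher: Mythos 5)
Your argument is correct, but in part~(ii) it takes a noticeably more elaborate route than the paper's. Part~(i) matches the paper exactly: $p$ together with $\swap{\bm{\delta}^{(n+2)}}{\bm{\delta}^{(n+3)}}$ generates the symmetric group on $\Delta \setminus \{\bm{\delta}^{(0)},\dots,\bm{\delta}^{(n)}\}$, and Relation~R2 then gives the commutation. For part~(ii), the paper simply reuses the same ``long cycle plus adjacent transposition'' pattern: since $\swap{\bm{\delta}^{(n+2)}}{\bm{\delta}^{(n+3)}}$ commutes with both $c$ (by~R2) and $\swap{\bm{\delta}^{(0)}}{\bm{\delta}^{(n+1)}}$ (disjoint supports), the element $t = \swap{\bm{\delta}^{(0)}}{\bm{\delta}^{(n+1)}}^{c^{2d+x-1}}$ commutes with $\swap{\bm{\delta}^{(n+2)}}{\bm{\delta}^{(n+3)}}$; together with~$[t,q_{d}]=1$ from~R3, and the observation that the $(4^{n}-2)$\nbd cycle $q_{d}$ with a transposition of two cyclically adjacent points of it generates the full symmetric group on the support of~$q_{d}$, the lemma is immediate. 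You instead establish commutation with all of $\Sym(T)$ via part~(i) (a strictly stronger fact than the paper needs), and then build up $\Sym(\Delta\setminus\{\bm{\delta}^{(d)},\bm{\delta}^{(n+1)}\})$ from $\langle q_{d},\Sym(T)\rangle$ by iterating the ``two copies of~$\Sym_{m}$ sharing an $(m-1)$\nbd set generate $\Sym_{m+1}$'' lemma; I have checked the cycle-order bookkeeping and the iteration does sweep in $\bm{\delta}^{(0)},\bm{\delta}^{(1)},\dots,\bm{\delta}^{(n)}$ (skipping~$\bm{\delta}^{(d)}$) as claimed. So your proof is valid, but it would be shorter and more uniform to notice that the generation argument you already use in part~(i) applies verbatim to $q_{d}$ and $\swap{\bm{\delta}^{(n+2)}}{\bm{\delta}^{(n+3)}}$, which avoids the iterative construction entirely.
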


\begin{proof}
  (i)~Note that our permutation~$p$, defined in~\eqref{eq:perms1},
  together with~$\swap{\bm{\delta}^{(n+2)}}{\bm{\delta}^{(n+3)}}$
  generate all the permutations of $\Delta \setminus \{
  \bm{\delta}^{(0)}, \bm{\delta}^{(1)}, \dots, \bm{\delta}^{(n)} \}$.
  Hence all permutations of the latter set commute with~$c$ by our
  assumed relations~\ref{rel:R2-c-disj}.

  (ii)~The disjoint transpositions
  $\swap{\bm{\delta}^{(0)}}{\bm{\delta}^{(n+1)}}$ and
  $\swap{\bm{\delta}^{(n+2)}}{\bm{\delta}^{(n+3)}}$ commute.
  Consequently
  $\swap{\bm{\delta}^{(d)}.\bm{x}_{d}}{\bm{\delta}^{(n+1)}} =
  \swap{\bm{\delta}^{(0)}}{\bm{\delta}^{(n+1)}}^{c^{2d+x-1}}$ commutes
  with $\swap{\bm{\delta}^{(n+2)}}{\bm{\delta}^{(n+3)}}$
  by~\ref{rel:R2-c-disj}.  The definition~\eqref{eq:perms1}  of the
  cycle~$q_{d}$ ensures that it together with
  $\swap{\bm{\delta}^{(n+2)}}{\bm{\delta}^{(n+3)}}$ generate all
  permutations in~$H$ with support disjoint from
  $\bm{\delta}^{(d)}$~and~$\bm{\delta}^{(n+1)}$.  Hence
  $\swap{\bm{\delta}^{(d)}.\bm{x}_{d}}{\bm{\delta}^{(n+1)}}$~commutes
  with all such permutations with use of
  Relation~\ref{rel:R3-q-disj}.
\end{proof}

We now show that the definition of~$\swap{\bm{\alpha}}{\bm{\beta}}$ in
Equation~\eqref{eq:31-swap} does not depend on the choice of~$\sigma$.
For if $\sigma_{1}$~and~$\sigma_{2}$ are two permutations of~$\Delta$
that both move $\bm{\delta}^{(d)}$ to~$\hat{\bm{\alpha}}$ and
$\bm{\delta}^{(n+1)}$ to~$\bm{\beta}$, then
$\sigma_{1}\sigma_{2}^{-1}$~fixes both
$\bm{\delta}^{(d)}$~and~$\bm{\delta}^{(n+1)}$, so commutes with
$\swap{\bm{\delta}^{(d)}.\bm{x}_{d}}{\bm{\delta}^{(n+1)}}$ by
part~(ii) of the lemma.  Therefore
$\swap{\bm{\delta}^{(d)}.\bm{x}_{d}}{\bm{\delta}^{(n+1)}}^{\sigma_{1}}
=
\swap{\bm{\delta}^{(d)}.\bm{x}_{d}}{\bm{\delta}^{(n+1)}}^{\sigma_{2}}$,
establishing that $\swap{\bm{\alpha}}{\bm{\beta}}$~is well-defined.

There are no split relations~\eqref{rel:split} to verify at this
stage, since we have not yet introduced any
transpositions~$\swap{\bm{\alpha}}{\bm{\beta}}$ where both
$\bm{\alpha}$~and~$\bm{\beta}$ have coordinates of length~$3$.  Hence
we only need to check conjugacy relations~\eqref{rel:conj} involving
the transpositions that we have introduced.

\begin{lemma}
  \label{lem:31^perm}
  \begin{enumerate}
  \item Let $\bm{\beta},\bm{\gamma} \in \Delta$, \ $x \in \{0,1\}$ and
    $d$~be an index with $1 \leq d \leq n$.  If $\sigma \in
    \Sym(\Delta)$, then
    \[
    \swap{\bm{\beta}.\bm{x}_{d}}{\bm{\gamma}}^{\sigma} =
    \swap{(\bm{\beta}\!\bullet\!\sigma).\bm{x}_{d}}{\bm{\gamma}\!\bullet\!\sigma},
    \]
    where, as above,
    $\bm{\beta}\bullet\sigma$~and~$\bm{\gamma}\bullet\sigma$ denote
    the images of $\bm{\beta}$~and~$\bm{\gamma}$ under the action
    of~$\sigma \in H$.
  \item If $\bm{\beta} \in \Delta \setminus \{ \bm{\delta}^{(0)},
    \bm{\delta}^{(1)}, \dots, \bm{\delta}^{(n)} \}$, \ $1 \leq d \leq
    n$ and $x \in \{0,1\}$, then
    \[
    \swap{\bm{\delta}^{(0)}}{\bm{\beta}}^{c^{2d+x-1}} =
    \swap{\bm{\delta}^{(d)}.\bm{x}_{d}}{\bm{\beta}}.
    \]
  \end{enumerate}
\end{lemma}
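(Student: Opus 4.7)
The plan is to reduce both parts to the definitional identity \eqref{eq:31-swap} combined with the interaction of $c$ with $H$ encoded in Lemma~\ref{lem:31-def}.

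For part~(i), I would begin by recalling that \eqref{eq:31-swap} expresses $\swap{\bm{\beta}.\bm{x}_{d}}{\bm{\gamma}}$ as $\swap{\bm{\delta}^{(d)}.\bm{x}_{d}}{\bm{\delta}^{(n+1)}}^{\tau}$ for any permutation $\tau \in H$ satisfying $\bm{\delta}^{(d)} \bullet \tau = \bm{\beta}$ and $\bm{\delta}^{(n+1)} \bullet \tau = \bm{\gamma}$. Conjugating by $\sigma \in H$ then changes the outer conjugator from $\tau$ to $\tau\sigma$, a permutation in $H$ sending $\bm{\delta}^{(d)}$ to $\bm{\beta} \bullet \sigma$ and $\bm{\delta}^{(n+1)}$ to $\bm{\gamma} \bullet \sigma$. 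A second invocation of \eqref{eq:31-swap} then identifies the resulting element with $\swap{(\bm{\beta}\!\bullet\!\sigma).\bm{x}_{d}}{\bm{\gamma}\!\bullet\!\sigma}$, as required.

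For part~(ii), the strategy is to exploit the hypothesis $\bm{\beta} \notin \{\bm{\delta}^{(0)}, \bm{\delta}^{(1)}, \dots, \bm{\delta}^{(n)}\}$ to select a permutation $\tau \in H$ that fixes every one of the addresses $\bm{\delta}^{(0)}, \bm{\delta}^{(1)}, \dots, \bm{\delta}^{(n)}$ and yet sends $\bm{\delta}^{(n+1)}$ to $\bm{\beta}$. Such a $\tau$ has support disjoint from $\{\bm{\delta}^{(0)}, \dots, \bm{\delta}^{(n)}\}$ and hence commutes with $c$ by Lemma~\ref{lem:31-def}(i). Writing the $H$-transposition as $\swap{\bm{\delta}^{(0)}}{\bm{\beta}} = \swap{\bm{\delta}^{(0)}}{\bm{\delta}^{(n+1)}}^{\tau}$, the expression $\swap{\bm{\delta}^{(0)}}{\bm{\beta}}^{c^{2d+x-1}}$ expands to $\swap{\bm{\delta}^{(0)}}{\bm{\delta}^{(n+1)}}^{\tau\, c^{2d+x-1}}$, and the commutativity of $\tau$ with $c$ allows me to rewrite this as $\swap{\bm{\delta}^{(0)}}{\bm{\delta}^{(n+1)}}^{c^{2d+x-1}\, \tau}$. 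An appeal to the base definition~\eqref{eq:31-basicdef} converts the inner factor to $\swap{\bm{\delta}^{(d)}.\bm{x}_{d}}{\bm{\delta}^{(n+1)}}^{\tau}$, and since $\tau$ fixes $\bm{\delta}^{(d)}$ and sends $\bm{\delta}^{(n+1)}$ to $\bm{\beta}$, it is an admissible choice in \eqref{eq:31-swap} for $\swap{\bm{\delta}^{(d)}.\bm{x}_{d}}{\bm{\beta}}$, completing the identification.

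The crux of the argument is really the careful selection of $\tau$ in part~(ii): the exclusion of $\bm{\beta}$ from the ``small'' addresses $\bm{\delta}^{(0)}, \dots, \bm{\delta}^{(n)}$ is precisely what guarantees the existence of a $\tau \in H$ whose support avoids those addresses, so that Lemma~\ref{lem:31-def}(i) is available to move $c^{2d+x-1}$ past $\tau$. Beyond that, both parts are essentially formal manipulations of conjugation that track the action on the base transposition $\swap{\bm{\delta}^{(d)}.\bm{x}_{d}}{\bm{\delta}^{(n+1)}}$, so I do not foresee any further obstacles.
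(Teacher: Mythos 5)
Your proposal is correct and follows essentially the same route as the paper: for part~(i) it unpacks the well-definedness already established for \eqref{eq:31-swap}, and for part~(ii) it picks a permutation fixing $\bm{\delta}^{(0)},\dots,\bm{\delta}^{(n)}$ and moving $\bm{\delta}^{(n+1)}$ to $\bm{\beta}$, commutes it past $c$ via Lemma~\ref{lem:31-def}(i), and then applies the base definition. The only cosmetic difference is that the paper fixes the concrete choice $\sigma=\swap{\bm{\delta}^{(n+1)}}{\bm{\beta}}$ (treating the trivial case $\bm{\beta}=\bm{\delta}^{(n+1)}$ separately) whereas you allow an arbitrary $\tau$ with the stated properties, which absorbs that case.
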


\begin{proof}
  (i)~This follows immediately from the definition of transpositions
  of the form~$\swap{\bm{\beta}.\bm{x}_{d}}{\bm{\gamma}}$ and their
  well-definedness.

  (ii)~We must establish the result when $\bm{\beta} \neq
  \bm{\delta}^{(n+1)}$.  Take $\sigma =
  \swap{\bm{\delta}^{(n+1)}}{\bm{\beta}}$.  This commutes with~$c$ by
  Lemma~\ref{lem:31-def}(i).  Hence
  \[
  \swap{\bm{\delta}^{(0)}}{\bm{\beta}}^{c^{2d+x-1}} =
  \swap{\bm{\delta}^{(0)}}{\bm{\delta}^{(n+1)}}^{\sigma \, c^{2d+x-1}}
  = \swap{\bm{\delta}^{(0)}}{\bm{\delta}^{(n+1)}}^{c^{2d+x-1} \,
    \sigma} =
  \swap{\bm{\delta}^{(d)}.\bm{x}_{d}}{\bm{\delta}^{(n+1)}}^{\sigma},
  \]
  which produces the required result using part~(i).
\end{proof}

Part~(i) of Lemma~\ref{lem:31^perm} establishes any instance of the
conjugacy relation~\eqref{rel:conj} when
$\swap{\bm{\gamma}}{\bm{\delta}} \in H$.  We consider now the
remaining instances of~\eqref{rel:conj} involving transpositions
defined at this stage of the induction (via~\eqref{eq:31-swap} above)
and we split into the cases listed in Lemma~\ref{lem:conj-types}.

\textbf{(A):}~Consider four incomparable addresses
$\bm{\alpha}$,~$\bm{\beta}$, $\bm{\gamma}$,~$\bm{\delta}$ where we may
assume $\wt(\bm{\alpha}) = \wt(\bm{\gamma}) = (3,1)$ and $\bm{\beta},
\bm{\delta} \in \Delta$.  Write $\bm{\alpha} =
\hat{\bm{\alpha}}.\bm{x}_{d}$ and $\bm{\gamma} =
\hat{\bm{\gamma}}.\bm{y}_{d'}$ for $x,y \in \{0,1\}$, indices
$d$~and~$d'$, and some $\hat{\bm{\alpha}}, \hat{\bm{\gamma}} \in
\Delta$.  Suppose $\hat{\bm{\alpha}} \neq \hat{\bm{\gamma}}$, so that
$\hat{\bm{\alpha}}$,~$\bm{\beta}$, $\hat{\bm{\gamma}}$
and~$\bm{\delta}$ are distinct addresses in~$\Delta$.  From
Lemma~\ref{lem:31^perm}(i), we know
$\swap{\bm{\delta}^{(n+2)}.\bm{x}_{d}}{\bm{\delta}^{(n+3)}}$ commutes
with $\swap{\bm{\delta}^{(0)}}{\bm{\delta}^{(n+1)}}$ and it commutes
with~$c$ by~\ref{rel:R4-c-disj2}.  Hence, conjugating
by~$c^{2d'+y-1}$, we conclude that
\[
\bigl[ \swap{\bm{\delta}^{(n+2)}.\bm{x}_{d}}{\bm{\delta}^{(n+3)}} ,
  \swap{\bm{\delta}^{(d')}.\bm{y}_{d'}}{\bm{\delta}^{(n+1)}} \bigr] = 1.
\]
Finally conjugate by a permutation~$\sigma$ in~$H$ that maps
$\bm{\delta}^{(n+2)}$ to~$\hat{\bm{\alpha}}$, \ $\bm{\delta}^{(n+3)}$
to~$\bm{\beta}$, \ $\bm{\delta}^{(d')}$ to~$\hat{\bm{\gamma}}$ and
$\bm{\delta}^{(n+1)}$ to~$\bm{\delta}$ to establish $\bigl[
  \swap{\bm{\alpha}}{\bm{\beta}} , \swap{\bm{\gamma}}{\bm{\delta}}
  \bigr] = 1$, as required.

On the other hand, if $\hat{\bm{\alpha}} = \hat{\bm{\gamma}}$, then in
order that $\bm{\alpha} \perp \bm{\gamma}$, we know that $d = d'$ and,
after suitable relabelling, we can assume $x = 0$ and~$y = 1$.  We
make use of Lemma~\ref{lem:31^perm}(ii) to observe:
\begin{align*}
  \swap{\bm{\delta}^{(d)}.\bm{0}_{d}}{\bm{\delta}^{(n+1)}} &=
  \swap{\bm{\delta}^{(0)}}{\bm{\delta}^{(n+1)}}^{c^{2d-1}}
  \\
  \swap{\bm{\delta}^{(d)}.\bm{1}_{d}}{\bm{\delta}^{(n+2)}} &=
  \swap{\bm{\delta}^{(0)}}{\bm{\delta}^{(n+2)}}^{c^{2d}} =
  \swap{\bm{\delta}^{(1)}.\bm{0}_{1}}{\bm{\delta}^{(n+2)}}^{c^{2d-1}}.
\end{align*}
By Lemma~\ref{lem:31^perm}(i),
$\swap{\bm{\delta}^{(0)}}{\bm{\delta}^{(n+1)}}$ commutes with
$\swap{\bm{\delta}^{(1)}.\bm{0}_{1}}{\bm{\delta}^{(n+2)}}$.  Hence,
upon conjugating by~$c^{2d-1}$, we deduce
\[
[ \swap{\bm{\delta}^{(d)}.\bm{0}_{d}}{\bm{\delta}^{(n+1)}} ,
  \swap{\bm{\delta}^{(d)}.\bm{1}_{d}}{\bm{\delta}^{(n+2)}} ] = 1.
\]
We establish the required relation $[ \swap{\bm{\alpha}}{\bm{\beta}} ,
  \swap{\bm{\gamma}}{\bm{\delta}} ] = 1$ by finally conjugating by a
permutation~$\sigma$ in~$H$ that moves $\bm{\delta}^{(d)}$
to~$\hat{\bm{\alpha}}$, \ $\bm{\delta}^{(n+1)}$ to~$\bm{\beta}$
and~$\bm{\delta}^{(n+2)}$ to~$\bm{\delta}$.

\textbf{(B):}~In view of Lemma~\ref{lem:31^perm}(i) and the symmetry
between $\bm{\gamma}$~and~$\bm{\delta}$ in the conjugacy relation,
we may assume in this case that $\wt(\bm{\gamma}) = (3,1)$ and
$\bm{\alpha} = \bm{\gamma}$.  The remaining addresses
$\bm{\beta}$~and~$\bm{\delta}$ must be from~$\Delta$.  Write
$\bm{\gamma} = \hat{\bm{\gamma}}.\bm{x}_{d}$ for some $x \in \{0,1\}$
and some index~$d$.  Conjugate the equation $
\swap{\bm{\delta}^{(0)}}{\bm{\delta}^{(n+1)}}^{%
  \swap{\bm{\delta}^{(0)}}{\bm{\delta}^{(n+2)}}} =
\swap{\bm{\delta}^{(n+1)}}{\bm{\delta}^{(n+2)}}$ by~$c^{2d+x-1}$ to
produce
\[
\swap{\bm{\delta}^{(d)}.\bm{x}_{d}}{\bm{\delta}^{(n+1)}}^{%
  \swap{\bm{\delta}^{(d)}.\bm{x}_{d}}{\bm{\delta}^{(n+2)}}} =
\swap{\bm{\delta}^{(n+1)}}{\bm{\delta}^{(n+2)}}
\]
by use of Lemma~\ref{lem:31-def}(i) and~\ref{lem:31^perm}(ii).
Finally conjugate by a permutation~$\sigma$ of~$\Delta$ moving
$\bm{\delta}^{(d)}$ to~$\hat{\bm{\gamma}}$, \ $\bm{\delta}^{(n+1)}$
to~$\bm{\beta}$ and $\bm{\delta}^{(n+2)}$ to~$\bm{\delta}$, using
Lemma~\ref{lem:31^perm}(i), to establish the required relation.

\textbf{(C)/(D):}~Conjugacy relations of the form~(C) occur only at
this stage when $\bm{\gamma}, \bm{\delta} \in \Delta$, which all then
hold by Lemma~\ref{lem:31^perm}.  There are no Type~(D) relations to
verify.  This completes the verifications required for Stage~1 of the
induction when $(m,k) = (3,1)$.

\spc

Now assume $(m,k) > (3,1)$.  Consider a pair of incomparable addresses
$\bm{\alpha}, \bm{\beta} \in \Omega^{\ast}$ with $\wt(\bm{\alpha}) =
(m,k)$ and $\bm{\beta} \in \Delta$.  To define the
transposition~$\swap{\bm{\alpha}}{\bm{\beta}}$, first choose $d$~to be
the index of one of the coordinates of~$\bm{\alpha}$ that has
length~$m$ and write $\bm{\alpha} = \hat{\bm{\alpha}}.\bm{x}_{d}$
where $x \in \{0,1\}$.  Then $\hat{\bm{\alpha}}$~is an address with
$\wt(\hat{\bm{\alpha}}) < (m,k)$ and, since every coordinate
of~$\bm{\beta}$ has length~$2$, it follows $\hat{\bm{\alpha}} \perp
\bm{\beta}$ also.  Choose $\bm{\zeta} \in \Delta$ incomparable with both
$\hat{\bm{\alpha}}$~and~$\bm{\beta}$.  Then
$\wt(\bm{\zeta}.\bm{x}_{d}) = (3,1) < (m,k)$ and, by induction, both
transpositions $\swap{\bm{\zeta}.\bm{x}_{d}}{\bm{\beta}}$
and~$\swap{\bm{\zeta}}{\hat{\bm{\alpha}}}$ have been constructed.  We
then define
\begin{equation}
  \swap{\bm{\alpha}}{\bm{\beta}} =
  \swap{\hat{\bm{\alpha}}.\bm{x}_{d}}{\bm{\beta}} \defeq
  \swap{\bm{\zeta}.\bm{x}_{d}}{\bm{\beta}}^{%
    \swap{\bm{\zeta}}{\hat{\bm{\alpha}}}}.
  \label{eq:inductionI-def}
\end{equation}
In order to achieve Relation~\eqref{rel:symm}, we also set
$\swap{\bm{\beta}}{\bm{\alpha}} \defeq
\swap{\bm{\alpha}}{\bm{\beta}}$.  We must verify that the above
definition is independent of the choice of the address~$\bm{\zeta}$
and of the index~$d$.

With the above assumptions, we make the following observations:

\begin{lemma}
  \label{lem:induct-welldef}
  \begin{enumerate}
  \item Suppose that $\bm{\zeta}$~and~$\bm{\eta}$ are distinct
    addresses in~$\Delta$ that are incomparable with both
    $\hat{\bm{\alpha}}$~and~$\bm{\beta}$.  Then
    \[
    \swap{\bm{\zeta}.\bm{x}_{d}}{\bm{\beta}}^{%
      \swap{\bm{\zeta}}{\bm{\hat{\alpha}}}} =
    \swap{\bm{\eta}.\bm{x}_{d}}{\bm{\beta}}^{%
      \swap{\bm{\eta}}{\bm{\hat{\alpha}}}}.
    \]
  \item Suppose that $d$~and~$d'$ are both indices of coordinates
    of~$\bm{\alpha}$ of length~$m$.  Write $\bm{\alpha} =
    \bm{\gamma}.\bm{x}_{d}.\bm{y}_{d'}$ for some $x,y \in \{0,1\}$.
    If $\bm{\zeta}$~is an address in~$\Delta$ incomparable with both
    $\bm{\beta}$~and~$\bm{\gamma}$, then
    \[
    \swap{\bm{\zeta}.\bm{x}_{d}}{\bm{\beta}}^{%
      \swap{\bm{\zeta}}{\bm{\gamma}.\bm{y}_{d'}}} =
    \swap{\bm{\zeta}.\bm{y}_{d'}}{\bm{\beta}}^{%
      \swap{\bm{\zeta}}{\bm{\gamma}.\bm{x}_{d}}}.
    \]
  \end{enumerate}
\end{lemma}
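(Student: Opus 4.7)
The plan is to prove both parts by exploiting the conjugacy and commutation relations between transpositions of weight strictly less than $(m,k)$, all of which are available through the inductive hypothesis.  Throughout, I write $f(\bm{\zeta}) = \swap{\bm{\zeta}.\bm{x}_{d}}{\bm{\beta}}^{\swap{\bm{\zeta}}{\hat{\bm{\alpha}}}}$ for the expression on the right of~\eqref{eq:inductionI-def}.  Crucially, every transposition appearing on either side of the two equations in the lemma has both entries of weight strictly less than $(m,k)$, so each Type~(A), (B) or~(C) relation of Lemma~\ref{lem:conj-types} between such transpositions has been verified at an earlier stage of the induction.

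For part~(i), since the distinct addresses $\bm{\zeta}, \bm{\eta} \in \Delta$ are incomparable, I set $s = \swap{\bm{\zeta}}{\bm{\eta}}$ and note $s = s^{-1}$.  Using the Type~(B) identities $\swap{\bm{\eta}}{\hat{\bm{\alpha}}}^{s} = \swap{\bm{\zeta}}{\hat{\bm{\alpha}}}$ and $\swap{\bm{\zeta}.\bm{x}_{d}}{\bm{\beta}}^{s} = \swap{\bm{\eta}.\bm{x}_{d}}{\bm{\beta}}$ together with the general identity $(A^{B})^{C} = (A^{C})^{B^{C}}$ one quickly obtains $f(\bm{\zeta}) = f(\bm{\eta})^{s}$.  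Separately I would show that $s$ commutes with $f(\bm{\zeta})$: the four addresses $\bm{\zeta}.\bm{x}_{d}, \bm{\beta}, \hat{\bm{\alpha}}, \bm{\eta}$ are pairwise incomparable, so by induction the Type~(A) commutator $[\swap{\bm{\zeta}.\bm{x}_{d}}{\bm{\beta}}, \swap{\hat{\bm{\alpha}}}{\bm{\eta}}] = 1$ holds; conjugating both arguments of this commutator by $\swap{\bm{\zeta}}{\hat{\bm{\alpha}}}$ and applying the Type~(B) identity $\swap{\hat{\bm{\alpha}}}{\bm{\eta}}^{\swap{\bm{\zeta}}{\hat{\bm{\alpha}}}} = s$ turns it into $[f(\bm{\zeta}), s] = 1$.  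Combining this commutation with $f(\bm{\zeta}) = f(\bm{\eta})^{s}$ and $s^{2} = 1$ then yields $f(\bm{\zeta}) = f(\bm{\eta})$.

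For part~(ii), the plan is to treat $(m,k) = (3,2)$ as a base case and reduce every larger $(m,k)$ to it.  When $(m,k) = (3,2)$ the address $\bm{\gamma}$ automatically lies in~$\Delta$, and Relation~R5 is precisely the required identity in the special instance $\bm{\gamma} = \bm{\delta}^{(0)}$, $\bm{\beta} = \bm{\delta}^{(1)}$, $\bm{\zeta} = \bm{\delta}^{(2)}$; conjugating R5 by a permutation $\sigma \in H$ that carries these three addresses to $\bm{\zeta}, \bm{\beta}, \bm{\gamma}$, and applying Lemma~\ref{lem:31^perm}(i) to transport the four $(3,1)$-transpositions appearing, delivers the general $(3,2)$-case.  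For $(m,k) > (3,2)$, I would first observe that $\bm{\alpha} \perp \bm{\beta}$ combined with $\bm{\beta} \in \Delta$ forces $\bm{\gamma} \perp \bm{\beta}$, and then select $\bm{\zeta}' \in \Delta$ distinct from $\bm{\zeta}$ and incomparable with $\bm{\gamma}, \bm{\beta}, \bm{\zeta}$ (possible since $\card{\Delta} = 4^{n} \geq 16$).  Setting $h = \swap{\bm{\gamma}}{\bm{\zeta}'}$ and using $(A^{B})^{h} = (A^{h})^{B^{h}}$ together with the Type~(A) commutation $\swap{\bm{\zeta}.\bm{x}_{d}}{\bm{\beta}}^{h} = \swap{\bm{\zeta}.\bm{x}_{d}}{\bm{\beta}}$ and the Type~(B) identity $\swap{\bm{\zeta}}{\bm{\gamma}.\bm{y}_{d'}}^{h} = \swap{\bm{\zeta}}{\bm{\zeta}'.\bm{y}_{d'}}$ (and the analogous identities for the right-hand side) transforms the equation under consideration into the $(3,2)$-case with $\bm{\gamma}$ replaced by $\bm{\zeta}' \in \Delta$, which has just been handled; since $g \mapsto g^{h}$ is an automorphism of~$G$ the original equation follows.

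I anticipate the principal obstacle to be the commutation $[f(\bm{\zeta}), s] = 1$ in part~(i): this Type~(A) relation cannot be invoked directly for the newly-introduced transposition $f(\bm{\zeta})$, whose very well-definedness is the object of the lemma, so one must produce it indirectly by conjugating an already-verified Type~(A) commutator at lower weight.  With this trick in place, the remaining manipulations in both parts amount to bookkeeping with the identity $(A^{B})^{C} = (A^{C})^{B^{C}}$ and routine applications of the Type~(B) relations established at earlier stages.
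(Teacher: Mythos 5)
Your proof is correct, and for part~(ii) it is effectively identical to the paper's argument (the paper also conjugates by a transposition $\swap{\bm{\gamma}}{\bm{\delta}}$ with $\bm{\delta} \in \Delta$ to push everything down to the $(3,2)$ case, merely inlining the base-case verification rather than citing it). For part~(i) the underlying inductive inputs are the same, but the organization differs slightly: you prove $f(\bm{\zeta}) = f(\bm{\eta})^{s}$ and then separately establish $[f(\bm{\zeta}),s] = 1$ by conjugating a lower-weight Type~(A) commutator, whereas the paper avoids needing any explicit commutation by conjugating $\swap{\bm{\zeta}.\bm{x}_{d}}{\bm{\beta}}$ directly by the product $\swap{\bm{\zeta}}{\hat{\bm{\alpha}}}\,\swap{\bm{\eta}}{\hat{\bm{\alpha}}}$, rewriting it as $\swap{\bm{\eta}}{\bm{\zeta}}\,\swap{\bm{\zeta}}{\hat{\bm{\alpha}}}$ via a lower-weight Type~(B) relation and then conjugating in stages --- both routes are valid, with yours costing one extra conjugated-commutator step.
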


\begin{proof}
  (i)~As distinct addresses in~$\Delta$, certainly
  $\bm{\zeta}$~and~$\bm{\eta}$ are incomparable.  All addresses
  appearing in the following calculation have weight $< (m,k)$ and so,
  by induction,
  \[
  \swap{\bm{\zeta}.\bm{x}_{d}}{\bm{\beta}}^{%
    \swap{\bm{\zeta}}{\hat{\bm{\alpha}}} \,
    \swap{\bm{\eta}}{\hat{\bm{\alpha}}}} =
  \swap{\bm{\zeta}.\bm{x}_{d}}{\bm{\beta}}^{%
    \swap{\bm{\eta}}{\bm{\zeta}} \,
    \swap{\bm{\zeta}}{\hat{\bm{\alpha}}}} =
  \swap{\bm{\eta}.\bm{x}_{d}}{\bm{\beta}}^{%
    \swap{\bm{\zeta}}{\hat{\bm{\alpha}}}} =
  \swap{\bm{\eta}.\bm{x}_{d}}{\bm{\beta}}
  \]
  and the required equation then follows.

  (ii)~Note that our assumption that $\bm{\alpha} \perp \bm{\beta}$
  and $\bm{\beta} \in \Delta$ implies that $\bm{\beta} \perp
  \bm{\gamma}$.  Now consider first the case when $(m,k) = (3,2)$, so
  that $\bm{\gamma} \in \Delta$.  We simply conjugate
  Relation~\ref{rel:R5-welldef} by a permutation~$\sigma \in
  \Sym(\Delta)$ that moves $\bm{\delta}^{(0)}$ to~$\bm{\zeta}$,
  \ $\bm{\delta}^{(1)}$ to~$\bm{\beta}$ and $\bm{\delta}^{(2)}$
  to~$\bm{\gamma}$ (using relations established in the weight~$(3,1)$
  stage) to yield the required formula.

  Now consider the case when $(m,k) > (3,2)$.  Choose $\bm{\delta} \in
  \Delta$ that is incomparable with each of
  $\bm{\beta}$,~$\bm{\gamma}$ and~$\bm{\zeta}$.  Note that
  $\wt(\bm{\delta}.\bm{x}_{d}.\bm{y}_{d'}) = (3,2)$ and so the
  transpositions with this address as an entry in the following
  calculation were constructed at an earlier stage.  Then
  \[
  \swap{\bm{\zeta}.\bm{x}_{d}}{\bm{\beta}}^{%
    \swap{\bm{\zeta}}{\bm{\gamma}.\bm{y}_{d'}} \,
    \swap{\bm{\gamma}}{\bm{\delta}}} =
  \swap{\bm{\zeta}.\bm{x}_{d}}{\bm{\beta}}^{%
    \swap{\bm{\zeta}}{\bm{\delta}.\bm{y}_{d'}}} =
  \swap{\bm{\delta}.\bm{x}_{d}.\bm{y}_{d'}}{\bm{\beta}}
  \]
  and similarly $\swap{\bm{\zeta}.\bm{y}_{d'}}{\bm{\beta}}^{%
    \swap{\bm{\zeta}}{\bm{\gamma}.\bm{x}_{d}} \,
    \swap{\bm{\gamma}}{\bm{\delta}}} =
  \swap{\bm{\delta}.\bm{x}_{d}.\bm{y}_{d'}}{\bm{\beta}}$.  It
  therefore follows that the left-hand sides of these equations are
  equal, from which we deduce our required formula.
\end{proof}

It follows from part~(i) of this lemma that our
definition~\eqref{eq:inductionI-def}
of~$\swap{\bm{\alpha}}{\bm{\beta}}$ is independent of the choice
of~$\bm{\zeta}$.  Then part~(ii) shows the definition is also
independent of the choice of index~$d$.  In conclusion, the
transpositions~$\swap{\bm{\alpha}}{\bm{\beta}}$, where
$\wt(\bm{\alpha}) = (m,k)$ and $\bm{\beta} \in \Delta$ or \emph{vice
versa}, are well-defined.  The remaining work in this part of the
induction is to establish the four types of conjugacy
relations~\eqref{rel:conj} and then the split
relation~\eqref{rel:split} when they involve such transpositions.

\textbf{(A):} Consider two transpositions
$\swap{\bm{\alpha}}{\bm{\beta}}$
and~$\swap{\bm{\gamma}}{\bm{\delta}}$, at least one of which was
defined as in Equation~\eqref{eq:inductionI-def} and the other
possibly arriving at an early stage in the induction, such that every
pair of addresses from $\{ \bm{\alpha}, \bm{\beta}, \bm{\gamma},
\bm{\delta} \}$ is incomparable.  Exploiting the symmetry
relation~\eqref{rel:symm}, we can suppose without loss of generality
that one of the following sets of conditions holds:
\begin{enumerate}
  \renewcommand{\labelenumi}{(A.\roman{enumi})}
\item $\wt(\bm{\alpha}) = (m,k)$, \ $\bm{\beta} \in \Delta$ and
  $\wt(\bm{\gamma}), \wt(\bm{\delta}) < (m,k)$; or
\item $\wt(\bm{\alpha}) = \wt(\bm{\gamma}) = (m,k)$ and $\bm{\beta},
  \bm{\delta} \in \Delta$.
\end{enumerate}
In Case~(A.i), write $\bm{\alpha} = \hat{\bm{\alpha}}.\bm{x}_{d}$ as
above.  Choose~$\bm{\zeta} \in \Delta$ that is incomparable with each
of $\hat{\bm{\alpha}}$,~$\bm{\beta}$, $\bm{\gamma}$ and~$\bm{\delta}$
and use this, together with Lemma~\ref{lem:induct-welldef}, in the
definition~\eqref{eq:inductionI-def}
of~$\swap{\bm{\alpha}}{\bm{\beta}}$.  By induction,
$\swap{\bm{\gamma}}{\bm{\delta}}$~commutes with both
$\swap{\bm{\zeta}.\bm{x}_{d}}{\bm{\beta}}$
and~$\swap{\bm{\zeta}}{\hat{\bm{\alpha}}}$ and hence with
$\swap{\bm{\alpha}}{\bm{\beta}} =
\swap{\bm{\zeta}.\bm{x}_{d}}{\bm{\beta}}^{%
  \swap{\bm{\zeta}}{\hat{\bm{\alpha}}}}$, as required.  Case~(A.ii) is
similar, but we now write $\bm{\gamma} =
\hat{\bm{\gamma}}.\bm{y}_{d'}$ for some suitable index~$d'$ and choose
$\bm{\eta} \in \Delta$ incomparable with each address in $\{
\hat{\bm{\alpha}}, \bm{\beta}, \hat{\bm{\gamma}}, \bm{\delta},
\bm{\zeta} \}$.  We then observe that, by induction, each
transposition used in the definition $\swap{\bm{\gamma}}{\bm{\delta}}
= \swap{\bm{\eta}.\bm{y}_{d'}}{\bm{\delta}}^{%
  \swap{\bm{\eta}}{\hat{\bm{\gamma}}}}$ commutes with each one used to
construct~$\swap{\bm{\alpha}}{\bm{\beta}}$.  This establishes all
Type~(A) conjugacy relations at this stage.

\textbf{(B):} Consider a conjugacy relation~\eqref{rel:conj} where
$\bm{\alpha} = \bm{\gamma\eta}$ as in Lemma~\ref{lem:conj-types}(B).
At least one address in the relation has weight~$(m,k)$ and the other
entry in a transposition involving such an address must be
in~$\Delta$.  Exploiting the symmetry present, there are four
possibilities:
\begin{enumerate}
  \renewcommand{\labelenumi}{(B.\roman{enumi})}
\item $\wt(\bm{\beta}) = (m,k)$, \ $\bm{\gamma}, \bm{\delta} \in
  \Delta$ and $\bm{\eta} = \bm{\epsilon}$;
\item $\wt(\bm{\gamma\eta}) = (m,k)$, \ $\bm{\beta} \in \Delta$ and
  $\wt(\bm{\gamma}), \wt(\bm{\delta}), \wt(\bm{\delta\eta}) < (m,k)$;
\item $\wt(\bm{\gamma\eta}) = \wt(\bm{\delta\eta}) = (m,k)$,
  \ $\bm{\beta} \in \Delta$ and $\wt(\bm{\gamma}), \wt(\bm{\delta}) < (m,k)$; or
\item $\wt(\bm{\gamma}) = \wt(\bm{\gamma\eta}) = (m,k)$,
  \ $\bm{\beta}, \bm{\delta} \in \Delta$ and $\wt(\bm{\delta\eta}) <
  (m,k)$.
\end{enumerate}
(It is impossible that $\wt(\bm{\gamma}) = \wt(\bm{\gamma\eta}) =
\wt(\bm{\delta\eta}) = (m,k)$: If this situation were to happen, then
there would be some index~$d$ where the $d$th coordinate
of~$\bm{\delta\eta}$ has length~$m$.  Then the $d$th coordinate
of~$\bm{\gamma\eta}$ has length~$m$, but that of~$\bm{\gamma}$ is
shorter, contradicting $\wt(\bm{\gamma}) = \wt(\bm{\gamma\eta})$.)

In Case~(B.i), write $\bm{\beta} = \hat{\bm{\beta}}.\bm{x}_{d}$ and
choose $\bm{\zeta} \in \Delta$ incomparable with each of
$\hat{\bm{\beta}}$,~$\bm{\gamma}$ and~$\bm{\delta}$ in order to define
$\swap{\bm{\gamma}}{\bm{\beta}} =
\swap{\bm{\gamma}}{\bm{\zeta}.\bm{x}_{d}}^{%
  \swap{\bm{\zeta}}{\hat{\bm{\beta}}}}$ and similarly
for~$\swap{\bm{\delta}}{\bm{\beta}}$.  Then, by induction,
\[
\swap{\bm{\gamma\eta}}{\bm{\beta}}^{%
  \swap{\bm{\gamma}}{\bm{\delta}}} =
\swap{\bm{\gamma}}{\bm{\beta}}^{%
  \swap{\bm{\gamma}}{\bm{\delta}}} =
\swap{\bm{\gamma}}{\bm{\zeta}.\bm{x}_{d}}^{%
  \swap{\bm{\zeta}}{\hat{\bm{\beta}}} \,
  \swap{\bm{\gamma}}{\bm{\delta}}} =
\swap{\bm{\delta}}{\bm{\zeta}.\bm{x}_{d}}^{%
  \swap{\bm{\zeta}}{\hat{\bm{\beta}}}} =
\swap{\bm{\delta}}{\bm{\beta}}.
\]
In Case~(B.ii), note that there is an index~$d$ such that the $d$th
coordinate of~$\bm{\gamma\eta}$ has length~$m$ and that
of~$\bm{\gamma}$ is shorter.  Therefore we can write $\bm{\eta} =
\hat{\bm{\eta}}.\bm{x}_{d}$ and choose $\bm{\zeta} \in \Delta$
incomparable with each of $\bm{\beta}$,~$\bm{\gamma}$
and~$\bm{\delta}$ to define $\swap{\bm{\gamma\eta}}{\bm{\beta}}
= \swap{\bm{\zeta}.\bm{x}_{d}}{\bm{\beta}}^{%
  \swap{\bm{\zeta}}{\bm{\gamma}\hat{\bm{\eta}}}}$.  Then
\[
\swap{\bm{\gamma\eta}}{\bm{\beta}}^{%
  \swap{\bm{\gamma}}{\bm{\delta}}} =
\swap{\bm{\zeta}.\bm{x}_{d}}{\bm{\beta}}^{%
  \swap{\bm{\zeta}}{\bm{\gamma}\hat{\bm{\eta}}} \,
  \swap{\bm{\gamma}}{\bm{\delta}}} =
\swap{\bm{\zeta}.\bm{x}_{d}}{\bm{\beta}}^{%
  \swap{\bm{\zeta}}{\bm{\delta}\hat{\bm{\eta}}}} =
\swap{\bm{\delta}\hat{\bm{\eta}}.\bm{x}_{d}}{\bm{\beta}} =
\swap{\bm{\delta\eta}}{\bm{\beta}}.
\]
For Case~(B.iii), there are two possibilities.  If there is some~$d$
such that the $d$th coordinate of
$\bm{\gamma\eta}$~and~$\bm{\delta\eta}$ both have length~$m$ but those
of $\bm{\gamma}$~and~$\bm{\delta}$ are shorter, then we use the same
argument as for Case~(B.ii), but now the last step in the calculation
is actually the definition of~$\swap{\bm{\delta\eta}}{\bm{\beta}}$.
Otherwise, there are $d$~and~$d'$ such that the $d$th coordinate
of~$\bm{\gamma\eta}$ has length~$m$ and that of~$\bm{\gamma}$ is
shorter and the $d'$th coordinate of~$\bm{\delta\eta}$ has length~$m$
and that of~$\bm{\delta}$ is shorter.  Moreover, by hypothesis, the
$d$th coordinate of~$\bm{\gamma}$ must be longer than that
of~$\bm{\delta}$, so has length at least~$3$.  Choose distinct
addresses $\bm{\zeta}, \bm{\theta} \in \Delta$ incomparable with each
of $\bm{\beta}$,~$\bm{\gamma}$ and~$\bm{\delta}$.  We use~$\bm{\zeta}$
when employing Equation~\eqref{eq:inductionI-def} to define
$\swap{\bm{\gamma\eta}}{\bm{\beta}}$
and~$\swap{\bm{\delta\eta}}{\bm{\beta}}$, exploiting the coordinates
of indices $d$~and~$d'$, respectively, having written $\bm{\eta} =
\hat{\bm{\eta}}.\bm{x}_{d}.\bm{y}_{d'}$.  Thus
$\swap{\bm{\gamma\eta}}{\bm{\beta}} =
\swap{\bm{\zeta}.\bm{x}_{d}}{\bm{\beta}}^{%
  \swap{\bm{\zeta}}{\bm{\gamma}\hat{\bm{\eta}}.\bm{y}_{d'}}}$ and
similarly for~$\swap{\bm{\delta\eta}}{\bm{\beta}}$ (as in the second
set of calculations below).  Furthermore $\wt(\bm{\theta\eta}) <
(m,k)$ since the $d$th coordinate of~$\bm{\theta\eta}$ is shorter than
that of~$\bm{\gamma\eta}$.  We therefore compute:
\begin{align*}
  \swap{\bm{\gamma\eta}}{\bm{\beta}}^{%
    \swap{\bm{\gamma}}{\bm{\delta}} \,
    \swap{\bm{\gamma}}{\bm{\theta}}}
  &=
  \swap{\bm{\zeta}.\bm{x}_{d}}{\bm{\beta}}^{%
    \swap{\bm{\zeta}}{\bm{\gamma}\hat{\bm{\eta}}.\bm{y}_{d'}} \,
    \swap{\bm{\gamma}}{\bm{\delta}} \,
    \swap{\bm{\gamma}}{\bm{\theta}}} \\
  &=
  \swap{\bm{\zeta}.\bm{x}_{d}}{\bm{\beta}}^{%
    \swap{\bm{\zeta}}{\bm{\theta}\hat{\bm{\eta}}.\bm{y}_{d'}} \,
    \swap{\bm{\theta}}{\bm{\delta}}} \\
  &=
  \swap{\bm{\theta}\hat{\bm{\eta}}.\bm{x}_{d}.\bm{y}_{d'}}{\bm{\beta}}^{%
    \swap{\bm{\theta}}{\bm{\delta}}} =
  \swap{\bm{\theta\eta}}{\bm{\beta}}^{%
    \swap{\bm{\theta}}{\bm{\delta}}} \\[7pt]
  \swap{\bm{\delta\eta}}{\bm{\beta}}^{%
    \swap{\bm{\theta}}{\bm{\delta}}} &=
  \swap{\bm{\zeta}.\bm{y}_{d'}}{\bm{\beta}}^{%
    \swap{\bm{\zeta}}{\bm{\delta}\hat{\bm{\eta}}.\bm{x}_{d}} \,
    \swap{\bm{\theta}}{\bm{\delta}}} \\
  &=
  \swap{\bm{\zeta}.\bm{y}_{d'}}{\bm{\beta}}^{%
    \swap{\bm{\zeta}}{\bm{\theta}\hat{\bm{\eta}}.\bm{x}_{d}}} \\
  &= \swap{\bm{\theta\eta}}{\bm{\beta}}
\end{align*}
Hence $\swap{\bm{\gamma\eta}}{\bm{\beta}}^{%
  \swap{\bm{\gamma}}{\bm{\delta}} \, \swap{\bm{\gamma}}{\bm{\theta}}}
= \swap{\bm{\delta\eta}}{\bm{\beta}}$ and, with use of our already
verified Type~(A) conjugacy relation, we conclude
$\swap{\bm{\gamma\eta}}{\bm{\beta}}^{%
  \swap{\bm{\gamma}}{\bm{\delta}}} =
\swap{\bm{\delta\eta}}{\bm{\beta}}^{%
  \swap{\bm{\gamma}}{\bm{\theta}}} =
\swap{\bm{\delta\eta}}{\bm{\beta}}$.

Finally, in Case~(B.iv), let $d$~be the index of a coordinate
of~$\bm{\gamma}$ of length~$m$.  Write $\bm{\gamma} =
\hat{\bm{\gamma}}.\bm{x}_{d}$.  Note that the $d$th coordinate
of~$\bm{\eta}$ is empty.  Choose distinct addresses $\bm{\zeta},
\bm{\theta} \in \Delta$ that are incomparable with each of
$\bm{\beta}$,~$\hat{\bm{\gamma}}$ and~$\bm{\delta}$.  The first is
used in the construction of $\swap{\bm{\gamma}}{\bm{\delta}}$
and~$\swap{\bm{\gamma\eta}}{\bm{\beta}}$.  One observes that
$\wt(\bm{\theta\eta}.\bm{x}_{d}) < (m,k)$.  We calculate:
\[
\swap{\bm{\gamma}}{\bm{\delta}}^{%
  \swap{\hat{\bm{\gamma}}}{\bm{\theta}}} =
\swap{\bm{\zeta}.\bm{x}_{d}}{\bm{\delta}}^{%
  \swap{\bm{\zeta}}{\hat{\bm{\gamma}}} \,
  \swap{\hat{\bm{\gamma}}}{\bm{\theta}}} =
\swap{\bm{\zeta}.\bm{x}_{d}}{\bm{\delta}}^{%
  \swap{\hat{\bm{\gamma}}}{\bm{\theta}} \,
  \swap{\bm{\zeta}}{\bm{\theta}}} =
\swap{\bm{\zeta}.\bm{x}_{d}}{\bm{\delta}}^{%
  \swap{\bm{\zeta}}{\bm{\theta}}} =
\swap{\bm{\theta}.\bm{x}_{d}}{\bm{\delta}}
\]
(since $\swap{\bm{\zeta}.\bm{x}_{d}}{\bm{\delta}}$ and
$\swap{\hat{\bm{\gamma}}}{\bm{\theta}}$ commute) so that
\begin{align*}
  \swap{\bm{\gamma\eta}}{\bm{\beta}}^{%
    \swap{\bm{\gamma}}{\bm{\delta}} \,
    \swap{\hat{\bm{\gamma}}}{\bm{\theta}}}
  =
  \swap{\bm{\zeta}.\bm{x}_{d}}{\bm{\beta}}^{%
    \swap{\bm{\zeta}}{\hat{\bm{\gamma}}\bm{\eta}} \,
    \swap{\bm{\gamma}}{\bm{\delta}} \,
    \swap{\hat{\bm{\gamma}}}{\bm{\theta}}}
  &=
  \swap{\bm{\zeta}.\bm{x}_{d}}{\bm{\beta}}^{%
    \swap{\bm{\zeta}}{\bm{\theta\eta}} \,
    \swap{\bm{\theta}.\bm{x}_{d}}{\bm{\delta}}} \\
  &=
  \swap{\bm{\theta\eta}.\bm{x}_{d}}{\bm{\beta}}^{%
    \swap{\bm{\theta}.\bm{x}_{d}}{\bm{\delta}}}
  =
  \swap{\bm{\delta\eta}}{\bm{\beta}}.
\end{align*}
Hence, by a Type~(A) conjugacy relation,
$\swap{\bm{\gamma\eta}}{\bm{\beta}}^{%
  \swap{\bm{\gamma}}{\bm{\delta}}} =
\swap{\bm{\delta\eta}}{\bm{\beta}}^{%
  \swap{\hat{\bm{\gamma}}}{\bm{\theta}}} =
\swap{\bm{\delta\eta}}{\bm{\beta}}$.  This establishes all the
Type~(B) conjugacy relations.

\textbf{(C):}~In the notation of Lemma~\ref{lem:conj-types}(C), if it
were the case that $\wt(\bm{\gamma\eta}) = \wt(\bm{\gamma\theta}) =
(m,k)$, then at this stage $\bm{\delta}, \bm{\delta\eta},
\bm{\delta\theta} \in \Delta$, which would force $\bm{\eta} =
\bm{\theta} = \bm{\epsilon}$.  The conjugacy relation would reduce to
one form~$g^{g} = g$ that holds in any group.  Consequently, upon
exploiting the symmetry in the relation, we must
verify~\eqref{rel:conj} in the following two cases:
\begin{enumerate}
  \renewcommand{\labelenumi}{(C.\roman{enumi})}
\item $\wt(\bm{\gamma\eta}) = (m,k)$, \ $\bm{\delta} \in \Delta$,
  \ $\bm{\theta} = \bm{\epsilon}$ and $\wt(\bm{\gamma}) < (m,k)$; or
\item $\wt(\bm{\gamma\eta}) = \wt(\bm{\delta\eta}) = (m,k)$,
  \ $\bm{\gamma}, \bm{\delta} \in \Delta$ and $\bm{\theta} =
  \bm{\epsilon}$.
\end{enumerate}
Both are dealt with in the same manner, namely by an argument
similar to Case~(B.ii) above.

\textbf{(D)/Split:} Note that, in the notation of
Lemma~\ref{lem:conj-types}(D), the addresses
$\bm{\eta}$~and~$\bm{\theta}$ must be non-empty.  However, since all
the tranpositions introduced via~\eqref{eq:inductionI-def} have one
entry from~$\Delta$, we conclude that no new conjugacy relations of
Type~(D) must be verified at this stage.  Similarly there are no split
relations~\eqref{rel:split} to verify at this stage.  In conclusion,
we have verified all the required relations involving the
transpositions that have been introduced.

\paragraph{Induction, Stage~2:} At the second stage of the induction
we assume that, for the fixed weight $(m,k) \geq (3,1)$, we have
already defined all transpositions~$\swap{\bm{\alpha}}{\bm{\beta}}$
where $\bm{\alpha}, \bm{\beta} \in \Omega^{\ast}$ satisfy $\bm{\alpha}
\perp \bm{\beta}$ and either $\wt(\bm{\alpha}), \wt(\bm{\beta}) <
(m,k)$, or $\wt(\bm{\alpha}) = (m,k)$ and~$\bm{\beta} \in \Delta$ (or
\emph{vice versa}).  The former case holds by the inductive assumption
and the latter by the completion of Stage~1.  We also assume that we
have verified all relations~\eqref{rel:order}--\eqref{rel:symm}
involving these transpositions.  We now define the remaining
transpositions with entries of weight at most~$(m,k)$.

Assume then that $\bm{\alpha}$~and~$\bm{\beta}$ are incomparable
addresses in~$\Omega^{\ast}$ of which one has weight~$(m,k)$ and the
other has weight at most~$(m,k)$ and is not from~$\Delta$.  Choose
$\bm{\zeta} \in \Omega$ incomparable with both
$\bm{\alpha}$~and~$\bm{\beta}$ and define
\begin{equation}
  \swap{\bm{\alpha}}{\bm{\beta}} \defeq
  \swap{\bm{\alpha}}{\bm{\zeta}}^{%
    \swap{\bm{\beta}}{\bm{\zeta}}}.
  \label{eq:stage2-def}
\end{equation}
At least one of the transpositions on the right-hand side is defined
via Stage~1, while the other (in the case that the relevant entry has
weight~$< (m,k)$) may have been constructed earlier in the inductive
process.  We first verify that this definition is independent of the
choice of~$\bm{\zeta}$.

\begin{lemma}
  \label{lem:stage2-welldef}
  Let $\bm{\alpha}$,~$\bm{\beta}$, $\bm{\zeta}$ and~$\bm{\eta}$ be
  incomparable addresses in~$\Omega^{\ast}$ with $\wt(\bm{\alpha}),
  \wt(\bm{\beta}) \leq (m,k)$ and $\bm{\zeta}, \bm{\eta} \in \Delta$.
  Then
  \begin{enumerate}
  \item $\swap{\bm{\alpha}}{\bm{\zeta}}^{%
    \swap{\bm{\beta}}{\bm{\zeta}}} = \swap{\bm{\alpha}}{\bm{\eta}}^{%
    \swap{\bm{\beta}}{\bm{\eta}}}$;
  \item $\swap{\bm{\alpha}}{\bm{\zeta}}^{%
    \swap{\bm{\beta}}{\bm{\zeta}}} = \swap{\bm{\beta}}{\bm{\zeta}}^{%
    \swap{\bm{\alpha}}{\bm{\zeta}}}$.
  \end{enumerate}
\end{lemma}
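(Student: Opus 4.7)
My plan is to prove part~(ii) first by lifting a braid relation from the base subgroup~$H$, and then to derive part~(i) from~(ii) via a short conjugation calculation that uses the Stage~1 Type~(A) and Type~(B) conjugacy relations already established.

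For part~(ii), choose auxiliary addresses $\bm{\alpha}', \bm{\beta}' \in \Delta$, distinct from one another and from~$\bm{\zeta}$, and incomparable with $\bm{\alpha}$~and~$\bm{\beta}$; such addresses exist since $\card{\Delta} = 4^{n}$ is much larger than~$3$.  Inside $H \cong \Sym(\Delta)$ the braid relation
\[
\swap{\bm{\alpha}'}{\bm{\zeta}} \, \swap{\bm{\beta}'}{\bm{\zeta}} \, \swap{\bm{\alpha}'}{\bm{\zeta}} = \swap{\bm{\beta}'}{\bm{\zeta}} \, \swap{\bm{\alpha}'}{\bm{\zeta}} \, \swap{\bm{\beta}'}{\bm{\zeta}}
\]
follows from Relations~R1, both sides reducing to~$\swap{\bm{\alpha}'}{\bm{\beta}'}$ in the symmetric group on~$\Delta$.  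Conjugate this equation by $\sigma = \swap{\bm{\alpha}}{\bm{\alpha}'} \, \swap{\bm{\beta}}{\bm{\beta}'}$.  Stage~1 Type~(B) supplies $\swap{\bm{\alpha}'}{\bm{\zeta}}^{\swap{\bm{\alpha}}{\bm{\alpha}'}} = \swap{\bm{\alpha}}{\bm{\zeta}}$, and Stage~1 Type~(A) gives $\swap{\bm{\alpha}}{\bm{\zeta}}^{\swap{\bm{\beta}}{\bm{\beta}'}} = \swap{\bm{\alpha}}{\bm{\zeta}}$, the four entries involved being pairwise incomparable.  Hence $\swap{\bm{\alpha}'}{\bm{\zeta}}^{\sigma} = \swap{\bm{\alpha}}{\bm{\zeta}}$, and symmetrically $\swap{\bm{\beta}'}{\bm{\zeta}}^{\sigma} = \swap{\bm{\beta}}{\bm{\zeta}}$; applying~$\sigma$ to the lifted braid relation then yields~(ii).

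For part~(i), we may assume $\bm{\zeta} \neq \bm{\eta}$.  Set $A = \swap{\bm{\alpha}}{\bm{\zeta}}^{\swap{\bm{\beta}}{\bm{\zeta}}}$; by~(ii) also $A = \swap{\bm{\beta}}{\bm{\zeta}}^{\swap{\bm{\alpha}}{\bm{\zeta}}}$.  Conjugating this latter expression by~$\swap{\bm{\alpha}}{\bm{\eta}}$ and using the identity $(g^{h})^{k} = g^{hk}$ gives
\[
A^{\swap{\bm{\alpha}}{\bm{\eta}}} = \swap{\bm{\beta}}{\bm{\zeta}}^{\swap{\bm{\alpha}}{\bm{\zeta}} \, \swap{\bm{\alpha}}{\bm{\eta}}}.
\]
Stage~1 Type~(B) provides $\swap{\bm{\alpha}}{\bm{\zeta}}^{\swap{\bm{\alpha}}{\bm{\eta}}} = \swap{\bm{\eta}}{\bm{\zeta}}$, so the exponent $\swap{\bm{\alpha}}{\bm{\zeta}} \, \swap{\bm{\alpha}}{\bm{\eta}}$ may be rewritten as $\swap{\bm{\alpha}}{\bm{\eta}} \, \swap{\bm{\eta}}{\bm{\zeta}}$.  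Since $\swap{\bm{\beta}}{\bm{\zeta}}$ commutes with the disjoint~$\swap{\bm{\alpha}}{\bm{\eta}}$ by Stage~1 Type~(A), and a final Type~(B) then gives $\swap{\bm{\beta}}{\bm{\zeta}}^{\swap{\bm{\eta}}{\bm{\zeta}}} = \swap{\bm{\beta}}{\bm{\eta}}$, we obtain $A^{\swap{\bm{\alpha}}{\bm{\eta}}} = \swap{\bm{\beta}}{\bm{\eta}}$.  Undoing the conjugation yields $A = \swap{\bm{\beta}}{\bm{\eta}}^{\swap{\bm{\alpha}}{\bm{\eta}}}$, and applying~(ii) at the address~$\bm{\eta}$ gives $A = \swap{\bm{\alpha}}{\bm{\eta}}^{\swap{\bm{\beta}}{\bm{\eta}}}$, establishing~(i).

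The main conceptual obstacle is part~(i): although both sides obviously represent the transposition~$\swap{\bm{\alpha}}{\bm{\beta}}$ in~$nV$, transferring this to an equality in~$G$ demands the above interplay between~(ii) and the Stage~1 conjugacy relations.  The crucial device is invoking~(ii) twice---once to put~$A$ in a form convenient for conjugation by~$\swap{\bm{\alpha}}{\bm{\eta}}$, and once to re-symmetrize after the Type~(B) manipulations.
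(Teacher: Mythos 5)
Your proposal is correct, but it takes a genuinely different route from the paper.  The paper's proof of part~(i) simply conjugates $\swap{\bm{\alpha}}{\bm{\zeta}}^{\swap{\bm{\beta}}{\bm{\zeta}}}$ on the right by~$\swap{\bm{\beta}}{\bm{\eta}}$, computes that the result collapses to~$\swap{\bm{\alpha}}{\bm{\eta}}$ via two Stage~1 conjugacy relations, and reads off the claimed identity; part~(ii) is then obtained by an almost identical move, introducing a fresh auxiliary $\bm{\eta} \in \Delta$ and showing that conjugating each side of the desired equation by~$\swap{\bm{\beta}}{\bm{\eta}}$ yields~$\swap{\bm{\alpha}}{\bm{\eta}}$.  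Both parts thus use the same simple device.  You instead prove~(ii) first, by lifting the braid relation
\[
\swap{\bm{\alpha}'}{\bm{\zeta}} \, \swap{\bm{\beta}'}{\bm{\zeta}} \, \swap{\bm{\alpha}'}{\bm{\zeta}} = \swap{\bm{\beta}'}{\bm{\zeta}} \, \swap{\bm{\alpha}'}{\bm{\zeta}} \, \swap{\bm{\beta}'}{\bm{\zeta}}
\]
from the base copy of~$\Sym(\Delta)$ and conjugating it by $\sigma = \swap{\bm{\alpha}}{\bm{\alpha}'}\,\swap{\bm{\beta}}{\bm{\beta}'}$, then derive~(i) from~(ii) by a chain of Stage~1 conjugacy manipulations.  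Both arguments are valid and rely on exactly the same Stage~1 (or earlier) established relations; the paper's is a little more economical and uniform, while yours isolates the braid relation as the conceptual core of~(ii), which is a nice observation.  (One small terminological caveat: several of your invoked relations are ``Stage~1 or earlier in the induction'' rather than strictly Stage~1, since the entries $\bm{\alpha}, \bm{\beta}$ need only have weight $\leq (m,k)$; this does not affect correctness.)
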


\begin{proof}
  (i)~In the following calculation, and indeed for many used during
  this stage, all the transpositions we manipulate involve one entry
  from~$\Delta$.  Hence the relations we rely upon hold by induction
  or were established in Stage~1.  Observe
  \[
  \swap{\bm{\alpha}}{\bm{\zeta}}^{%
    \swap{\bm{\beta}}{\bm{\zeta}} \, \swap{\bm{\beta}}{\bm{\eta}}} =
  \swap{\bm{\alpha}}{\bm{\zeta}}^{%
    \swap{\bm{\beta}}{\bm{\eta}} \, \swap{\bm{\eta}}{\bm{\zeta}}} =
  \swap{\bm{\alpha}}{\bm{\zeta}}^{%
    \swap{\bm{\eta}}{\bm{\zeta}}} =
  \swap{\bm{\alpha}}{\bm{\eta}},
  \]
  from which the claimed equation follows.

  (ii)~Choose $\bm{\eta} \in \Delta$ that is incomparable with each of
  $\bm{\alpha}$,~$\bm{\beta}$ and~$\bm{\zeta}$.  We then calculate
  \[
  \swap{\bm{\alpha}}{\bm{\zeta}}^{%
    \swap{\bm{\beta}}{\bm{\zeta}} \, \swap{\bm{\beta}}{\bm{\eta}}} =
  \swap{\bm{\alpha}}{\bm{\zeta}}^{%
    \swap{\bm{\eta}}{\bm{\zeta}}} =
  \swap{\bm{\alpha}}{\bm{\eta}}
  \AND
  \swap{\bm{\beta}}{\bm{\zeta}}^{%
    \swap{\bm{\alpha}}{\bm{\zeta}} \, \swap{\bm{\beta}}{\bm{\eta}}} =
  \swap{\bm{\eta}}{\bm{\zeta}}^{%
    \swap{\bm{\alpha}}{\bm{\zeta}}} =
  \swap{\bm{\eta}}{\bm{\alpha}},
  \]
  so that $\swap{\bm{\alpha}}{\bm{\zeta}}^{%
    \swap{\bm{\beta}}{\bm{\zeta}} \, \swap{\bm{\beta}}{\bm{\eta}}} =
  \swap{\bm{\beta}}{\bm{\zeta}}^{%
    \swap{\bm{\alpha}}{\bm{\zeta}} \, \swap{\bm{\beta}}{\bm{\eta}}}$,
  from which the claimed equation follows.
\end{proof}

Part~(i) of this lemma tells us that the definition~\eqref{eq:stage2-def} of~$\swap{\bm{\alpha}}{\bm{\beta}}$ is
independent of the choice of $\bm{\zeta} \in \Delta$.  Part~(ii) tells
us that $\swap{\bm{\alpha}}{\bm{\beta}} =
\swap{\bm{\beta}}{\bm{\alpha}}$; that is, Relation~\eqref{rel:symm}
holds for the transpositions defined via~\eqref{eq:stage2-def}.

Before verifying the remaining relations, we shall observe that
\[
\swap{\bm{\alpha}}{\bm{\beta}} = \swap{\bm{\alpha}}{\bm{\zeta}}^{%
  \swap{\bm{\beta}}{\bm{\zeta}}}
\]
holds for every triple $\bm{\alpha}$,~$\bm{\beta}$ and~$\bm{\zeta}$ of
pairwise incomparable addresses in~$\Omega^{\ast}$ with
$\wt(\bm{\alpha}), \wt(\bm{\beta}) \leq (m,k)$ and $\bm{\zeta} \in
\Delta$.  When one or both of $\bm{\alpha}$~and~$\bm{\beta}$ have
weight~$(m,k)$ this is the definition~\eqref{eq:stage2-def} combined
with Lemma~\ref{lem:stage2-welldef}(i).  When they both have weight~$<
(m,k)$, it follows by induction.

The four Cases~(A)--(D) of conjugacy relations~\eqref{rel:conj}
described in Lemma~\ref{lem:conj-types} are all established by the
same method.  We illustrate this for~(B), namely we establish
\[
\swap{\bm{\gamma\eta}}{\bm{\beta}}^{%
  \swap{\bm{\gamma}}{\bm{\delta}}} =
\swap{\bm{\delta\eta}}{\bm{\beta}}
\]
for incomparable addresses $\bm{\beta}, \bm{\gamma}, \bm{\delta} \in
\Omega^{\ast}$ and some (possibly empty) $\bm{\eta} \in \Omega$ such
that the addresses appearing in the formula all have weight~$\leq
(m,k)$.  Choose distinct $\bm{\zeta}, \bm{\theta} \in \Delta$
incomparable with each of $\bm{\beta}$,~$\bm{\gamma}$
and~$\bm{\delta}$, so that $\swap{\bm{\gamma\eta}}{\bm{\beta}} =
\swap{\bm{\gamma\eta}}{\bm{\zeta}}^{\swap{\bm{\beta}}{\bm{\zeta}}}$
and $\swap{\bm{\gamma}}{\bm{\delta}} =
\swap{\bm{\gamma}}{\bm{\theta}}^{\swap{\bm{\delta}}{\bm{\theta}}}$.
In the following calculation, all the transpositions manipulated have
second entry either $\bm{\zeta}$~or~$\bm{\theta}$ (selected
from~$\Delta$):
\begin{align*}
  \swap{\bm{\gamma\eta}}{\bm{\beta}}^{%
    \swap{\bm{\gamma}}{\bm{\delta}}} =
  \swap{\bm{\gamma\eta}}{\bm{\zeta}}^{%
    \swap{\bm{\beta}}{\bm{\zeta}} \, \swap{\bm{\delta}}{\bm{\theta}} \,
    \swap{\bm{\gamma}}{\bm{\theta}} \, \swap{\bm{\delta}}{\bm{\theta}}}
  &= \swap{\bm{\gamma\eta}}{\bm{\zeta}}^{%
    \swap{\bm{\beta}}{\bm{\zeta}} \, \swap{\bm{\gamma}}{\bm{\theta}} \,
    \swap{\bm{\delta}}{\bm{\theta}}} \\
  &= \swap{\bm{\theta\eta}}{\bm{\zeta}}^{%
    \swap{\bm{\beta}}{\bm{\zeta}} \, \swap{\bm{\delta}}{\bm{\theta}}} =
  \swap{\bm{\delta\eta}}{\bm{\zeta}}^{%
    \swap{\bm{\beta}}{\bm{\zeta}}}
\end{align*}
and the latter is equal to~$\swap{\bm{\delta\eta}}{\bm{\beta}}$.
Cases~(A), (C) and~(D) of the conjugacy relations are established
similarly.

Finally we establish all split relations~\eqref{rel:split} for this
stage.  If $(m,k) = (3,1)$, then an arbitrary split relation has the
form
\[
\swap{\bm{\alpha}}{\bm{\beta}} =
\swap{\bm{\alpha}.\bm{0}_{d}}{\bm{\beta}.\bm{0}_{d}} \,
\swap{\bm{\beta}.\bm{1}_{d}}{\bm{\beta}.\bm{1}_{d}}
\]
for incomparable $\bm{\alpha}, \bm{\beta} \in \Delta$.  This is
deduced from Relation~\ref{rel:R6-split} by conjugating by a
permutation $\sigma \in \Sym(\Delta)$ that moves $\bm{\delta}^{(0)}$
to~$\bm{\alpha}$ and $\bm{\delta}^{(1)}$ to~$\bm{\beta}$.  For the
case when $(m,k) > (3,1)$, choose $\bm{\zeta}, \bm{\eta} \in \Delta$
such that every pair from $\{ \bm{\alpha}, \bm{\beta}, \bm{\zeta},
\bm{\eta} \}$ are incomparable.  We have just established that
$\swap{\bm{\zeta}}{\bm{\eta}} =
\swap{\bm{\zeta}.\bm{0}_{d}}{\bm{\eta}.\bm{0}_{d}} \,
\swap{\bm{\zeta}.\bm{1}_{d}}{\bm{\eta}.\bm{1}_{d}}$ and we deduce the
general case of~\eqref{rel:split} by conjugating
by~$\swap{\bm{\zeta}}{\bm{\alpha}} \, \swap{\bm{\eta}}{\bm{\beta}}$
and using the conjugacy relations~\eqref{rel:conj} that we have
already established.

\paragraph{Transpositions with short coordinates:} We have now
constructed all transpositions~$\swap{\bm{\alpha}}{\bm{\beta}}$ with
$\bm{\alpha}, \bm{\beta} \in \Omega^{\ast}$ and $\bm{\alpha} \perp
\bm{\beta}$ and demonstrated that all the required
relations~\eqref{rel:order}--\eqref{rel:split} (and
also~\eqref{rel:symm}) involving such transpositions hold in~$G$.  We
complete the definitions by constructing the remaining
transpositions~$\swap{\bm{\alpha}}{\bm{\beta}}$ with $\bm{\alpha},
\bm{\beta} \in \Omega$.  Fix a sequence $(k_{1},k_{2},\dots,k_{n})$
with each $k_{i} \in \{0,1,2\}$.  As an induction hypothesis, assume
that we have constructed all
transpositions~$\swap{\bm{\alpha}}{\bm{\beta}}$ where $\bm{\alpha} =
(\alpha_{1},\alpha_{2},\dots,\alpha_{n})$ and $\bm{\beta} =
(\beta_{1},\beta_{2},\dots,\beta_{n})$ are incomparable addresses
satisfying $\length{\alpha_{i}}, \length{\beta_{i}} \geq k_{i}$ for $1
\leq i \leq d$.  (The ``base case'' is $(k_{1},k_{2},\dots,k_{n}) =
(2,2,\dots,2)$, for which our assumption follows from the steps just
completed.)  Select an index~$d$ with $k_{d} > 0$ and for any pair
of incomparable addresses $\bm{\alpha}$~and~$\bm{\beta}$ such that
$\length{\alpha_{i}}, \length{\beta_{i}} \geq k_{i}$ for $i \neq d$
and such that one, or possibly both, of $\alpha_{d}$~or~$\beta_{d}$
has length~$k_{d}-1$, define
\begin{equation}
  \swap{\bm{\alpha}}{\bm{\beta}} \defeq
  \swap{\bm{\alpha}.\bm{0}_{d}}{\bm{\beta}.\bm{0}_{d}} \,
  \swap{\bm{\alpha}.\bm{1}_{d}}{\bm{\beta}.\bm{1}_{d}}.
  \label{eq:shortdef}
\end{equation}
Both transpositions on the right-hand side exist by our assumption.
Furthermore, the transpositions on the right-hand satisfy the
relations~\eqref{rel:order} and~\eqref{rel:symm} and commute.  It
follows that $\swap{\bm{\alpha}}{\bm{\beta}} =
\swap{\bm{\beta}}{\bm{\alpha}}$ and
$\swap{\bm{\alpha}}{\bm{\beta}}^{2} = 1$.  Conjugacy relations are
established similarly to earlier steps, namely by considering
Cases~(A)--(D) of Lemma~\ref{lem:conj-types}.  The method is the same
for each case.  For example, consider a Case~(B) conjugacy relation;
that is, one of the form
\[
\swap{\bm{\gamma\eta}}{\bm{\beta}}^{%
  \swap{\bm{\gamma}}{\bm{\delta}}} =
\swap{\bm{\delta\eta}}{\bm{\beta}}
\]
where some entry here has its $d$th coordinate of length~$k_{d}-1$.
We may assume the entry with this
shorter coordinate is either $\bm{\gamma}$ (and possibly
also~$\bm{\gamma\eta}$) or~$\bm{\beta}$.  If the $d$th coordinate
of~$\bm{\gamma}$ has length~$k_{d}-1$ and that of~$\bm{\eta}$ is
empty, then we use the formula~\eqref{eq:shortdef} for both
$\swap{\bm{\gamma\eta}}{\bm{\beta}}$ and~$\swap{\bm{\gamma}}{\bm{\delta}}$.  Note then
$\bm{\eta}.\bm{x}_{d} = \bm{x}_{d}.\bm{\eta}$ for $x \in \{0,1\}$,
which permits us to calculate the following conjugate:
\begin{align*}
  \swap{\bm{\gamma\eta}}{\bm{\beta}}^{%
    \swap{\bm{\gamma}}{\bm{\delta}}} &=
  \bigl(
  \swap{\bm{\gamma\eta}.\bm{0}_{d}}{\bm{\beta}.\bm{0}_{d}} \,
  \swap{\bm{\gamma\eta}.\bm{1}_{d}}{\bm{\beta}.\bm{1}_{d}}
  \bigr)^{%
    \swap{\bm{\gamma}.\bm{0}_{d}}{\bm{\delta}.\bm{0}_{d}} \,
    \swap{\bm{\gamma}.\bm{1}_{d}}{\bm{\delta}.\bm{1}_{d}}} \\
  &= \swap{\bm{\delta\eta}.\bm{0}_{d}}{\bm{\beta}.\bm{0}_{d}} \,
  \swap{\bm{\delta\eta}.\bm{1}_{d}}{\bm{\beta}.\bm{1}_{d}}
  = \swap{\bm{\delta\eta}}{\bm{\beta}}.
\end{align*}
relying upon relations that hold by the inductive assumption.  The
last step is either one of these assumed relations or is the
definition of~$\swap{\bm{\delta\eta}}{\bm{\beta}}$ if it is the case
that the $d$th coordinate of $\bm{\beta}$~or~$\bm{\delta}$ has
length~$k_{d}-1$.  Alternatively if the $d$th coordinate
of~$\bm{\gamma}$ has length~$k_{d}-1$ and that of~$\bm{\eta}$ is
non-empty, write $\bm{\eta} = \bm{x}_{d}.\hat{\bm{\eta}}$ for some $x
\in \{0,1\}$ and some (possibly empty) $\hat{\bm{\eta}} \in \Omega$.
In this case, we use Equation~\eqref{eq:shortdef} for the definition
of~$\swap{\bm{\gamma}}{\bm{\delta}}$ and calculate
\begin{align*}
  \swap{\bm{\gamma\eta}}{\bm{\beta}}^{%
    \swap{\bm{\gamma}}{\bm{\delta}}} &=
  \swap{\bm{\gamma}.\bm{x}_{d}.\hat{\bm{\eta}}}{\bm{\beta}}^{%
    \swap{\bm{\gamma}.\bm{0}_{d}}{\bm{\delta}.\bm{0}_{d}} \,
    \swap{\bm{\gamma}.\bm{1}_{d}}{\bm{\delta}.\bm{1}_{d}}} \\
  &= \swap{\bm{\delta}.\bm{x}_{d}.\hat{\bm{\eta}}}{\bm{\beta}} =
  \swap{\bm{\delta\eta}}{\bm{\beta}}.
\end{align*}
Conjugacy relations in Case~(B) where the $d$th coordinate
of~$\bm{\beta}$ has length~$k_{d}-1$ and those in Cases~(A), (C)
and~(D) are established similarly.  Finally the split
relations~\eqref{rel:split} involving the transpositions defined
in~\eqref{eq:shortdef} are either simply that definition or are
inherited from split relations for the terms on the right-hand side of
that formula.

It now follows, using this step repeatedly, that we have constructed
transpositions~$\swap{\bm{\alpha}}{\bm{\beta}}$, for $\bm{\alpha},
\bm{\beta} \in \Omega$ with $\bm{\alpha} \perp \bm{\beta}$, in the
group~$G$ and verified all
relations~\eqref{rel:order}--\eqref{rel:split} involving these
transpositions.  Consequently, by Theorem~\ref{thm:infpres}, there is
a homomorphism $\phi \colon nV \to G$ mapping each transposition
in~$nV$ to the corresponding element that we have defined in~$G$.
Moreover, Relation~\ref{rel:R7-c-def} tells us that the generator~$c$
is in the image of~$\phi$ and hence $G$~is isomorphic to a quotient
of~$nV$.  On the other hand, all relations
\ref{rel:R1-Sym}--\ref{rel:R7-c-def} listed are satisfied by the
corresponding elements of~$nV$ and so there is a homomorphism from~$G$
into~$nV$ with non-trivial image.  The fact that $nV$~is simple
therefore yields $G \cong nV$, completing the proof of
Theorem~\ref{thm:finitepres}.

\begin{trivlist}
\item\textsc{Proof of Corollary~\ref{cor:2gen}:} The subgroup $H =
  \langle a,b \rangle \cong \Sym(\Delta)$ of~$G$ can be generated by a
  cycle~$x$ of length~$4^{n}$ and a transposition~$t$ that can be
  assumed disjoint from~$c$ (as described via
  Relation~\ref{rel:R7-c-def}).  Note that $c$~has odd order.
  Therefore $c$~and~$t$ are powers of~$y = ct$ and $\{ x, y \}$~is a
  generating set for~$G$.  Applying Tietze transformations to produce
  a presentation on generators $x$~and~$y$ introduces two additional
  relations.  This establishes the corollary. \qed
\end{trivlist}

\paragraph{Acknowledgements:} The author thanks Collin Bleak for
introducing him to Brin's family of groups~$nV$, suggesting this
project for investigation and for various helpful comments.  It should
also be recorded that the fact that $nV$~can be generated by two
elements, as appears in the proof of Corollary~\ref{cor:2gen}, was
originally observed in collaboration with Bleak.  The author also
thanks Prof.\ Brin for his comments on a draft of this article and a
referee of an earlier version for their helpful suggestions.

\end{document}